\documentclass{amsart}
\usepackage{amssymb}
\usepackage[alphabetic]{amsrefs}
\usepackage[latin1]{inputenc}

\newcommand\K{{\mathcal K}}

\newcommand\N{{\mathbb N}}

\newcommand\V{{\mathcal V}}

\newcommand\Z{{\mathbb Z}}
\newcommand\Zh{{\widehat\Z}}

\newcommand\at[1]{{@#1}}
\newcommand\prist[1]{_{\#{#1}}}
\newcommand\aut{\operatorname{Aut}}

\newcommand\core{\operatorname{Core}}

\newcommand\gb{\overline{\Gamma}}
\newcommand\gh{\widehat{\Gamma}}
\newcommand\gt{\widetilde{\Gamma}}
\newcommand\id{\operatorname{id}}

\newcommand\rist{\operatorname{Rist}}
\renewcommand\setminus{\smallsetminus}

\newcommand\stab{\operatorname{Stab}}

\newcommand\sym{\operatorname{Sym}}

\newtheorem{thm}{Theorem}[section]
\newtheorem{prop}[thm]{Proposition}
\newtheorem{cor}[thm]{Corollary}
\newtheorem{lem}[thm]{Lemma}

\theoremstyle{remark}
\newtheorem{rmk}[thm]{Remark}
\theoremstyle{definition}

\newtheorem{quest}{Question}
\newtheorem{prob}[thm]{Problem}

\newtheorem{mainthm}{Theorem}
\newtheorem{maincor}[mainthm]{Corollary}

\newtheoremstyle{Quotethm}{\topsep}{\topsep}%
     {\itshape}
     {}
     {\bfseries}
     {.}
     { }
     {\thmname{#1}\thmnote{ #3}}
\theoremstyle{Quotethm}
\newtheorem*{quotethm}{Theorem}
\newtheorem*{quotecor}{Corollary}

\numberwithin{equation}{section}

\font\cyreight=wncyr8

\begin{document}
\title{The congruence subgroup problem for branch groups}
\author{Laurent Bartholdi}
\author{Olivier Siegenthaler}
\author{Pavel Zalesskii}
\thanks{This project was initiated when the third author visited the
  first two at EPFL, whose financial support is gratefully
  acknowledged. Pavel Zalesskii is partly supported by CNPq.}
\begin{abstract}
  We state and study the congruence subgroup problem for groups acting
  on rooted tree, and for branch groups in particular.  The problem is
  reduced to the computation of the congruence kernel, which we split
  into two parts: the branch kernel and the rigid kernel.  In the case
  of regular branch groups, we prove that the first one is Abelian
  while the second has finite exponent.  We also establish some
  rigidity results concerning these kernels.

  We work out explicitly known and new examples of non-trivial
  congruence kernels, describing in each case the group action.  The
  Hanoi tower group receives particular attention due to its
  surprisingly rich behaviour.
\end{abstract}
\maketitle
\setcounter{tocdepth}{2}

\section{Introduction}
One often encounters infinite mathematical objects that are
arbitrarily well approximated by finite (quotient) objects. The
question then arises as to which properties of the infinite object one
can read in the finite objects.  As a classical
example~\cite{selmer:diophantine}, the equation $3x^3+4y^3+5z^3=0$
admits non-trivial solutions in $\Z/n\Z$ for all $n$, but no
non-trivial solution in $\Z$.

As another example, consider the integers $\Z$ approximated by the
finite rings $\Z/2^n\Z$. The integer $3$ is invertible in each of
these finite quotients, but not in $\Z$. More pedantically, the ideal
$3\Z$ has index $3$ in $\Z$ but its closure in $\varprojlim\Z/2^n\Z$ has
index $1$.

We concentrate on group theory. We have a group $G$ acting on a set
$X$; suppose this set admits $G$-equivariant finite quotients $X_n$,
on which $G$ induces permutation groups $G_n$. Saying that $G$'s
action on $X$ is arbitrarily well approximated by actions on the $X_n$
amounts to saying that $G$ embeds in the profinite group $\overline
G=\varprojlim G_n$. On the other hand, $G$ also embeds in its profinite
completion $\widehat G$, which maps naturally onto $\overline G$. The
\emph{congruence problem} asks to compare these profinite groups, or
more quantitatively to describe the kernel of the map $\widehat
G\to\overline G$, the \emph{congruence kernel}.

Let $G$ be a group acting faithfully on a locally finite rooted tree
$T$. Then $G$ is residually finite since the intersection $\bigcap_n
\stab_G(n)$ of the pointwise stabilizers of the levels $T_n$ is
trivial. It follows that $G$ is Hausdorff with respect to two
topologies: the profinite topology and the topology defined by level
stabilizers $\stab_G(n)$. We shall call the latter \emph{congruence
  topology}. Therefore, as in the classical congruence subgroup
problem~\cite{bass-l-s:congruence} we can ask whether these two
topologies coincide.  We can follow this analogy further, completing
$G$ with respect to these two topologies, and define a
\emph{congruence kernel} $C$ as the kernel of the natural epimorphism
of the completions $\widehat G\longrightarrow \overline G$ (it is
worth mentioning that $\overline G$ coincides with the closure of $G$
in the profinite group $\aut(T)$). Then the congruence subgroup
problem can be formulated as computation of the congruence kernel.

Denote by $\rist_G(v)$ and call the \emph{rigid stabilizer} of $v$ the
subgroup of $G$ consisting of the elements that move only vertices
which are in the subtree rooted at $v$.  Let $\rist_G(n)$ be the group
generated by $\rist_G(v)$ for all $v$ at level $n$. We say that $G$ is
a \emph{branch group} if it admits an action on a rooted tree $T$ such
that $\rist_G(n)$ is of finite index for every $n$. The class of
branch groups has been studied intensively during the last two
decades~\cite{bartholdi-g-s:bg}. We also say the group $G$ is
\emph{level-transitive} if it acts transitively on each level
$T_n$. Note that we make that assumption in Theorems A-F, but we do
not require it as part of the definition of a branch group, so as to
cover potentially interesting examples such as those
in~\cite{nekrashevych:cantor}.

If $T$ is a regular rooted tree, meaning the number of descendants of
every vertex is constant, we say $G\le\aut(T)$ is \emph{self-similar}
if for every $g\in G$ and $v\in T$ the composition $T\to T_v\to
T_{v^g}\to T$ defines an element of $G$; here $T_v$ denotes the
subtree below $v$.

For a branch group, rigid stabilizers of levels define another
profinite topology that we shall call \emph{branch topology}. This
topology is stronger than the congruence and weaker than the
profinite topology. Denote by $\widetilde G$ the completion of $G$
with respect to the branch topology. Then for the branch group $G$
we have the following kernels:

\medskip
Congruence kernel: $\ker(\widehat G\to\overline G)$;

Branch kernel: $\ker(\widehat G\to\widetilde G)$;

Rigid kernel: $\ker(\widetilde G\to\overline G)$.

\bigskip

In this paper we are concerned with determining the structure of these
kernels. However the preliminary question of great importance is

\begin{quest}
Do any of the kernels depend on the branch representation of
$\Gamma$?
\end{quest}

\noindent In other words, are these kernels invariants of the group
$G$, or only of its action on a tree?

\medskip The situation is quite clear in the case of self-similar,
regular branch groups.  A group $G$ acting faithfully on a rooted tree
is \emph{regular branch} if there is a subgroup $K$ in $G$ such that
$K$ contains a copy $v*K$ of $K$ acting on the subtree rooted at $v$
for every vertex $v$ of the tree, and the subgroup generated by the
$v*K$ for all $v$ of given length has finite index in $G$.  For a
regular branch group the congruence kernel does not depend on the
representation.  We shall prove the following

\newcommand\regbranchindep{%
  Let $\Gamma$ be a group and consider two level-transitive,
  self-similar, regular branch representations $\Gamma\to\aut T_i$
  with corresponding closures $\gb_i$. The congruence kernel
  $\K_1=\ker(\gh\to\gb_1)$ is equal to the congruence kernel
  $\K_2=\ker(\gh\to\gb_2)$.  }
\begin{mainthm}\label{thm:regular branch -> independence}
\regbranchindep
\end{mainthm}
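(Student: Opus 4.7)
My strategy is to rewrite the congruence kernel intrinsically in terms of the branching subgroup and then to compare the two resulting descriptions inside the common profinite completion $\gh$.

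For a single regular branch representation $\Gamma\le\aut T$ with branching subgroup $K$, the internal direct product $L_n:=\prod_{|v|=n} v*K$ has finite index in the level stabilizer $\stab_\Gamma(n)$. I would first establish the equality $\bigcap_n\overline{\stab_\Gamma(n)}=\bigcap_n\overline{L_n}$ of closed subgroups of $\gh$. The nontrivial inclusion requires showing that the finite quotient $\stab_\Gamma(n)/L_n$ (whose behavior, as $n$ varies, is precisely what is measured by the rigid kernel) is eventually absorbed into deeper level stabilizers by self-similarity; level-transitivity makes this argument uniform across vertices.

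Applying the reformulation to both representations, with branching subgroups $K_i$, yields $\K_i=\bigcap_n\overline{L_n^{(i)}}$, where $L_n^{(i)}:=\prod_{|v|=n, v\in T_i}v*K_i$. I would then pass to a common finite-index subgroup $K:=K_1\cap K_2$ of $\Gamma$, verifying that $K$ remains a branching subgroup for each representation by checking that $v*K$ has finite index in $v*K_i$ for every vertex $v$ of $T_i$ (a consequence of self-similarity applied to the inclusion $K\subseteq K_i$). After this reduction, both chains $\{L_n^{(i)}\}$ are built from the same abstract subgroup $K$ but arranged along the two different tree structures.

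The main obstacle is the synchronization step: one must show that the chains $\{\overline{L_n^{(1)}}\}_n$ and $\{\overline{L_n^{(2)}}\}_n$ in $\gh$ are mutually cofinal. I would attack this by iterating the branch property through both trees simultaneously. Given $n$, find $m$ large enough that each $v*K$ for $v$ at level $n$ of $T_1$ decomposes, via self-similarity applied to the $T_2$-action on $v*K$, as an internal direct product of copies of $K$ indexed by the level-$m$ vertices of $T_2$ that sit inside the subtree at $v$; level-transitivity allows the choice of $m$ to be uniform in $v$. This yields $\overline{L_m^{(2)}}\subseteq\overline{L_n^{(1)}}$ up to a finite-index correction already absorbed in the first step, and the symmetric argument completes the proof of cofinality and hence the equality $\K_1=\K_2$.
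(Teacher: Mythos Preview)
Your Step~1 already fails. Since $L_n\le\rist_\Gamma(n)$, the intersection $\bigcap_n\overline{L_n}$ is contained in $\bigcap_n\overline{\rist_\Gamma(n)}$, which is the branch kernel; in fact the two families are cofinal by Corollary~\ref{cor:rist cofinal to K}, so $\bigcap_n\overline{L_n}$ \emph{equals} the branch kernel. Your claimed equality $\bigcap_n\overline{\stab_\Gamma(n)}=\bigcap_n\overline{L_n}$ therefore asserts that the congruence kernel coincides with the branch kernel, i.e.\ that the rigid kernel vanishes. The Hanoi tower group of Section~\ref{sec:Hanoi} is a self-similar, level-transitive, regular branch group whose rigid kernel is a Klein four-group, so the equality is false in general. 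The phrase ``eventually absorbed into deeper level stabilizers'' amounts precisely to the assertion that $\varprojlim_n\stab_\Gamma(n)/L_n$ is trivial, and that inverse limit \emph{is} the rigid kernel.

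Steps~2 and~3 also have gaps. There is no reason for $K_1\cap K_2$ to be a branching subgroup for $\rho_1$: the operation $g\mapsto v*g$ in $T_1$ preserves $K_1$ by hypothesis but has no reason to preserve $K_2$. More fundamentally, the two trees $T_1$ and $T_2$ are unrelated combinatorial objects (they may even have different branching degrees), so the phrase ``vertices of $T_2$ that sit inside the subtree at $v$'' for $v\in T_1$ has no meaning; there is no common geometry to synchronise.

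The paper's argument is entirely different and never attempts to compare the two tree structures. By Theorem~\ref{thm:structure of congruence kernel} each $\K_i$ is an extension of an Abelian group by a group of finite exponent, hence satisfies a group law (for suitable $e$ one has $[x^e,y^e]=1$). The image of $\K_1$ in $\gb_2$ is then a closed normal subgroup of the branch group $\gb_2$ satisfying a law; by Corollary~\ref{cor:no law in branch groups} (a consequence of Ab\'ert's theorem) this image must be trivial, so $\K_1\le\K_2$, and symmetrically $\K_2\le\K_1$.
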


\noindent This theorem uses the following structure result:

\begin{mainthm}\label{thm:structure of congruence kernel}
  Let $\Gamma$ be a level-transitive, self-similar, regular branch
  group.  Then the branch kernel is Abelian and the rigid kernel has
  finite exponent.
\end{mainthm}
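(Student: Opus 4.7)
Write $R_n = \rist_\Gamma(n)$ and $S_n = \stab_\Gamma(n)$, let $K \le \Gamma$ be the branching subgroup, and let $d$ denote the degree of the tree. Both parts of the theorem are inverse-limit assertions, so in each case the strategy is to extract a uniform statement at level~$n$ from the regular branch structure and then pass to the limit.

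For the rigid kernel: the compatible descriptions $\gt = \varprojlim_n \Gamma/R_n$ and $\gb = \varprojlim_n \Gamma/S_n$ identify it with $\varprojlim_n S_n/R_n$, so it suffices to bound the exponent of $S_n/R_n$ uniformly in~$n$. Self-similarity provides an injective homomorphism $\psi_n \colon S_n \hookrightarrow \Gamma^{d^n}$, and iterating the regular branch inclusion $n$ times yields $\psi_n(R_n) \supseteq K^{d^n}$. Composing $\psi_n$ with the projection $\Gamma^{d^n} \to (\Gamma/K)^{d^n}$ therefore has kernel inside $R_n$, so $S_n/R_n$ is a quotient of a subgroup of $(\Gamma/K)^{d^n}$. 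The exponent of the latter equals the (finite) exponent $e$ of $\Gamma/K$, uniformly in~$n$, so the rigid kernel has exponent dividing~$e$.

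For the branch kernel $\B = \bigcap_n \overline{R_n}^{\gh}$, the containment $[\B,\B] \subseteq \bigcap_n \overline{[R_n,R_n]}^{\gh}$ inside the profinite group $\gh$ reduces Abelianness to the following: \emph{for every finite-index normal subgroup $M \trianglelefteq \Gamma$ there exists $n$ with $[R_n,R_n] \subseteq M$.} The branch hypothesis makes $R_n$ the internal direct product $\bigoplus_{|v|=n} \rist_\Gamma(v)$ of pairwise commuting subgroups, so $[R_n,R_n] = \bigoplus_{|v|=n} \rist_\Gamma(v)'$; via self-similarity each factor corresponds to a fixed subgroup of $\Gamma$ (containing $K'$ with finite index) acting on the deep subtree $T_v$. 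The delicate step — and the main obstacle I anticipate — is absorbing this direct sum into an arbitrary prescribed~$M$. It is strictly weaker than the congruence subgroup property (which would demand $R_n \subseteq M$) but not formal, since the regular branch hypothesis does not by itself place $K'$ inside deeper rigid stabilizers. My plan is to combine level-transitivity, which forces the images of the $\rist_\Gamma(v)'$ in the finite group $\Gamma/M$ to be mutually conjugate and hence to generate a single normal subgroup therein, with a bootstrap using self-similarity of $K'$ that pushes these commutators into arbitrarily deep rigid level stabilizers, so that for $n$ large enough their image in $\Gamma/M$ collapses to the identity.
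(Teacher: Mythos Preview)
Your argument for the rigid kernel is correct and is essentially the paper's mechanism: embed $S_n$ into $\Gamma^{d^n}$ via the level-$n$ decomposition and observe that the preimage of $K^{d^n}$ already lies in $R_n$ (since $g\at{v}\in K$ and $v*K\le K\le\Gamma$ force $v*(g\at{v})\in\rist_\Gamma(v)$, whence $g=\prod_v v*(g\at{v})\in R_n$), so $S_n/R_n$ is a subquotient of $(\Gamma/K)^{d^n}$. Your version is in fact slightly cleaner than the paper's, which routes through $\core_\Gamma(X^n*K)$ and the cofinality of $\{X^n*K\}$ with $\{\rist_\Gamma(n)\}$; you obtain the bound $\exp(\Gamma/K)$ directly.

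The branch-kernel half, however, has a genuine gap. Your reduction is correct: to show $\B=\bigcap_n\overline{R_n}$ is Abelian it suffices that every finite-index normal $M\trianglelefteq\Gamma$ contain $R_n'$ for some $n$. But the ``bootstrap'' you sketch does not establish this. Level-transitivity does make the subgroups $\rist_\Gamma(v)'$ for $|v|=n$ mutually $\Gamma$-conjugate, so their images in $\Gamma/M$ generate a single normal subgroup; the problem is that nothing in your plan forces that normal subgroup to be trivial. Self-similarity of $K'$ tells you only that $X^{n+1}*K'\le X^n*K'$, i.e.\ that commutators of $K$ can be pushed into deeper rigid level stabilizers, but $M$ has no a~priori relation to the congruence or branch filtrations (if it did, one of those topologies would already coincide with the profinite one), so ``deep enough'' has no purchase on $M$. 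The images of $R_n'$ in the finite group $\Gamma/M$ do eventually stabilize, but nothing you have written rules out a nontrivial stable value.

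What is missing is the one structural fact about branch groups that links an \emph{arbitrary} normal subgroup to the tree geometry: if $1\ne g\in\Gamma$ fixes a vertex $v$ but moves some descendant $w$ of $v$, then the normal closure of $g$ contains $\rist_\Gamma(w)'$. This is Grigorchuk's lemma (Lemma~\ref{lem:rist} in the paper); applying it to any $1\ne g\in M$ and then invoking level-transitivity gives $R_n'\le M$ with $n$ the level of $w$, immediately. No self-similarity, regular branch structure, or bootstrap is needed for this half of the theorem --- only the level-transitive branch hypothesis.
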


Summarizing the information for the congruence kernel we can state
the following

\begin{mainthm}
  Let $\Gamma$ be a level-transitive, self-similar, regular branch
  group. Then the congruence kernel $\ker(\gh\to\gb)$ does not depend
  on a branch representation and is Abelian by finite exponent.
\end{mainthm}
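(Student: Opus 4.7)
The plan is to assemble this statement directly out of Theorems A and B, without any genuinely new ingredient. The first half of the conclusion — that the congruence kernel is independent of the chosen branch representation — is literally the content of Theorem A, so I would simply quote it and move on. All the work therefore lies in explaining how Theorem B yields the ``Abelian-by-finite-exponent'' structure.

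My strategy for the structural part is to sandwich the congruence kernel between the branch kernel and the rigid kernel. Because the three topologies on $\Gamma$ (profinite, branch, congruence) are ordered by refinement, the universal property of completions yields a canonical factorization
$$\gh \twoheadrightarrow \gt \twoheadrightarrow \gb$$
of surjections. Comparing kernels produces the short exact sequence
$$1 \longrightarrow \ker(\gh\to\gt) \longrightarrow \ker(\gh\to\gb) \longrightarrow \ker(\gt\to\gb) \longrightarrow 1,$$
in which exactness on the right is immediate from surjectivity of $\gh\to\gt$ (any lift in $\gh$ of an element of the rigid kernel automatically lies in the congruence kernel). This exhibits the congruence kernel as an extension of the rigid kernel by the branch kernel.

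Applying Theorem B, the branch kernel $\ker(\gh\to\gt)$ is Abelian and the rigid kernel $\ker(\gt\to\gb)$ has finite exponent. Hence the congruence kernel contains a normal Abelian subgroup whose quotient has finite exponent, which is precisely the definition of Abelian-by-(finite exponent). The only subtlety worth pausing on is the verification of the three-term exact sequence, but this reduces to a standard diagram chase using surjectivity of the two factor maps; there is no genuine obstacle beyond this bookkeeping, and the present theorem stands as a clean corollary of the preceding two.
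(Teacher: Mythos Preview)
Your proposal is correct and matches the paper's own treatment: Theorem~C is stated in the paper explicitly as a summary of Theorems~A and~B, with no separate proof given, and your assembly via the short exact sequence $1\to\ker(\gh\to\gt)\to\ker(\gh\to\gb)\to\ker(\gt\to\gb)\to 1$ is exactly the intended derivation. There is nothing to add.
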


For general branch groups (i.e. not \emph{regular} branch) the
situation is possibly more complicated.  We have the following partial
result concerning the independence of the representation:

\newcommand\indepbranchkernel{%
  Let $\Gamma$ be a level-transitive branch group and consider two
  level-transitive branch representations $\rho_i:\Gamma\to\aut T_i$
  of $\Gamma$ with $i=1,2$. Write $\gt_i$ and $\gb_i$ for the
  corresponding branch and congruence completion and denote the branch
  kernels by $\K_i=\ker(\gh\to\gt_i)$.

  Then $\K_i$ is in the congruence kernel $\ker(\gh\to\gb_j)$ for all
  $i,j\in\{1,2\}$.  }
\begin{mainthm}\label{thm:independence branch kernel}
\indepbranchkernel
\end{mainthm}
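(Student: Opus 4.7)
The plan is as follows. First, $\K_i\subseteq\ker(\gh\to\gb_i)$ is immediate for a single tree: the branch topology refines the congruence topology there, so $\gh\to\gb_i$ factors as $\gh\to\gt_i\to\gb_i$. By symmetry between the two representations, the substantive content is therefore to establish the cross containment $\K_1\subseteq\ker(\gh\to\gb_2)$.

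Next I would translate this into a statement inside $\Gamma$. Each $\overline{\stab^{(2)}_\Gamma(m)}$ is open in the compact group $\gh$, being the closure of a finite-index subgroup, and a compactness argument then shows that the containment $\K_1=\bigcap_n\overline{\rist^{(1)}_\Gamma(n)}\subseteq\overline{\stab^{(2)}_\Gamma(m)}$ is equivalent to the existence, for each $m$, of some $n$ with $\overline{\rist^{(1)}_\Gamma(n)}\subseteq\overline{\stab^{(2)}_\Gamma(m)}$. Intersecting with $\Gamma$ (and using that both subgroups are finite-index in $\Gamma$, hence closed in $\gh$), this reduces to the purely group-theoretic statement that the branch topology of $T_1$ refines the congruence topology of $T_2$ on $\Gamma$: for every level $m$ of $T_2$, there exists a level $n$ of $T_1$ with $\rist^{(1)}_\Gamma(n)\subseteq\stab^{(2)}_\Gamma(m)$. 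Equivalently, the image $\bar K_n$ of $\rist^{(1)}_\Gamma(n)$ in the finite quotient $\bar\Gamma_m:=\Gamma/\stab^{(2)}_\Gamma(m)$ must be trivial for $n$ large.

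To attack this I would exploit the decomposition $\rist^{(1)}_\Gamma(n)=\prod_v\rist^{(1)}_\Gamma(v)$, where $v$ ranges over the vertices of $T_1$ at level $n$, as an internal direct product whose factors pairwise commute and are transitively permuted by $\Gamma$-conjugation (by level-transitivity). Projecting to $\bar\Gamma_m$, the images $\bar H_v$ are pairwise commuting and $\bar\Gamma_m$-conjugate, and together they generate the normal subgroup $\bar K_n$ of $\bar\Gamma_m$; being a descending chain in the finite group $\bar\Gamma_m$, the $\bar K_n$ stabilize to some $\bar K_\infty$, and the goal becomes $\bar K_\infty=\{1\}$.

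The hard part will be to rigorously establish this triviality. The tension I plan to exploit is that the number of commuting conjugate factors grows without bound with $n$, while the $\bar\Gamma_m$-conjugacy class of any single $\bar H_v$ has at most $|\bar\Gamma_m|$ members, so for $n$ large most of the $\bar H_v$ must collapse to a common subgroup; the pairwise commutation then places this common subgroup inside rigid centralizer configurations inside the transitive permutation group $\bar\Gamma_m$. Combined with the faithfulness $\bigcap_n\rist^{(1)}_\Gamma(n)=\{1\}$ on $T_1$ and a diagonal compactness argument in $\gh$ (lifting a hypothetical nontrivial element of $\bar K_\infty$ to a limit point in $\K_1$ lying outside $\overline{\stab^{(2)}_\Gamma(m)}$), this should close the argument by contradiction. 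The symmetric statement $\K_2\subseteq\ker(\gh\to\gb_1)$ follows by interchanging the roles of $T_1$ and $T_2$.
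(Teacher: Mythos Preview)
Your reduction is fine: the containment $\K_1\subseteq\ker(\gh\to\gb_2)$ is indeed equivalent to the topological refinement statement that for every $m$ there exists $n$ with $\rist^{(1)}_\Gamma(n)\subseteq\stab^{(2)}_\Gamma(m)$. The problem is the endgame. Your final compactness step is circular: lifting a nontrivial element of $\bar K_\infty$ to a limit point of $\K_1$ outside $\overline{\stab^{(2)}_\Gamma(m)}$ is precisely the negation of the claim, not a contradiction with anything you have established. The faithfulness $\bigcap_n\rist^{(1)}_\Gamma(n)=\{1\}$ holds in $\Gamma$, but in $\gh$ the corresponding intersection of closures is exactly $\K_1$, which may well be nontrivial (and is, in the examples). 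So nothing you have written forces $\bar K_\infty=\{1\}$; your ``rigid centralizer configurations'' remark is too vague to carry the weight. Note that a finite transitive permutation group can perfectly well have a nontrivial abelian normal subgroup, so even if your pigeonhole observation were pushed to show $\bar K_\infty$ is abelian, that alone would not finish.

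The paper closes this gap with two ingredients you do not invoke. First, it proves a structural result (Theorem~\ref{thm:branch kernel Abelian}): the branch kernel $\K_1$ is \emph{abelian}, because the filter $\{N\trianglelefteq_f\Gamma\}$ is cofinal with $\{\rist^{(1)}_\Gamma(n)'\rist^{(1)}_\Gamma(n)^e\}$ (via Corollary~\ref{cor:N contains rist'} and Lemma~\ref{lem:N contains rist^e}). Second, it uses Ab\'ert's theorem that a group separating an infinite set satisfies no group law, yielding Corollary~\ref{cor:no law in branch groups}: a nontrivial normal subgroup of a level-transitive branch group satisfies no law. The proof of Theorem~\ref{thm:independence branch kernel} is then two lines: the image of $\K_1$ in $\gb_2$ is a normal abelian subgroup of the branch group $\gb_2$, hence trivial. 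Your pigeonhole idea (many element-wise commuting conjugate factors in a finite quotient must eventually coincide, forcing abelianness) is a shadow of the first ingredient at the level of a single finite quotient, but without the second ingredient---or an equivalent obstruction to nontrivial normal abelian subgroups in branch groups---the argument does not conclude.
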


That is, the branch kernel is contained in the congruence kernel of
any other representation.  This theorem is heavily based on the
observation that the branch kernel is Abelian. Moreover, it can be
considered as a profinite $\gt$-module that is described by the
following

\newcommand\descrbranchkernel{%
  Let $\Gamma\leq\aut T$ be a level-transitive branch group.  There is
  an Abelian group $A$ such that the branch kernel
  $\K=\ker(\gh\to\gt)$ is isomorphic to
  $\widehat\Z[[\gt]]\widehat\otimes_{\widehat\Z[[\gt_w]]}A$ qua
  profinite $A[[\gt]]$-module.  Here $\gt_w$ is the stabilizer in
  $\gt$ of a point $w\in\partial T$ of the boundary of the tree, where
  the action of $\gt$ on $T$ is given by the natural map
  $\gt\to\gb\leq\aut T$.

Moreover, $A$ can be explicitly computed as
\begin{align*}
\varprojlim_{\substack{v<w\\k\geq0}}\rist_\Gamma(v)/(\rist_\Gamma(v)'\rist_\Gamma(v)^k).
\end{align*}
}
\begin{mainthm}\label{thm:description branch kernel}
\descrbranchkernel
\end{mainthm}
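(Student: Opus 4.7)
The starting point is the key fact---used also in the proof of Theorem~\ref{thm:independence branch kernel}---that the branch kernel $\K$ is Abelian. Since $\K$ is a closed normal subgroup of $\gh$ with quotient $\gt$, and $\K$ acts trivially on itself by conjugation, it becomes a profinite $\widehat\Z[[\gt]]$-module. My plan is to realise this module explicitly by tracking $\K$ through the successive abelianisations of $\overline{\rist_\Gamma(n)}$.

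The intersection description $\K=\bigcap_n\overline{\rist_\Gamma(n)}$, which follows from $\gt=\varprojlim_n\Gamma/\rist_\Gamma(n)$, is the first lever. Because distinct level-$n$ rigid stabilisers commute pointwise in $\Gamma$, their closures in $\gh$ form a topological direct product $\overline{\rist_\Gamma(n)}\cong\prod_{v\in T_n}\overline{\rist_\Gamma(v)}$, so the abelianisation splits as a direct sum indexed by vertices. Combined with the Abelianness of $\K$, this yields an injection $\pi_n\colon\K\hookrightarrow\overline{\rist_\Gamma(n)}^{\mathrm{ab}}$, and one checks that $\K\cong\varprojlim_n\pi_n(\K)$. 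The transition map from level $n+1$ to level $n$ respects the vertex decomposition: at a level-$n$ vertex $v$ with children $v_1,\ldots,v_d$ it is induced by the inclusion $\prod_i\rist_\Gamma(v_i)\hookrightarrow\rist_\Gamma(v)$.

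Level-transitivity then lets one reorganise the direct sum over $T_n$ as an induced $\gt$-module: fixing a ray $w\in\partial T$ with level-$n$ vertex $v_n<w$, the $\gt$-orbit $T_n\cong\gt/\gt_{v_n}$ yields a canonical isomorphism of the level-$n$ term with $\widehat\Z[[\gt]]\widehat\otimes_{\widehat\Z[[\gt_{v_n}]]}A_{v_n}$, where $A_{v_n}$ denotes the local contribution at $v_n$. Since the stabilisers $\gt_{v_n}$ form a descending chain with intersection $\gt_w$, passing to the inverse limit gives $\K\cong\widehat\Z[[\gt]]\widehat\otimes_{\widehat\Z[[\gt_w]]}A$ with $A=\varprojlim_n A_{v_n}$.

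The main technical point is to identify $A_{v_n}$ with the \emph{full} profinite abelianisation $\varprojlim_k\rist_\Gamma(v_n)/(\rist_\Gamma(v_n)'\rist_\Gamma(v_n)^k)$, rather than with the potentially smaller group $\overline{\rist_\Gamma(v_n)}^{\mathrm{ab}}$. The crucial observation should be that every finite-index subgroup of $\rist_\Gamma(v_n)$ is cut out, after passing to higher $n$, by deeper rigid stabilisers $\rist_\Gamma(v')$ with $v'>v_n$; in the inverse limit these refine the topology on $\rist_\Gamma(v_n)$ induced from $\gh$ up to the full profinite topology of $\rist_\Gamma(v_n)$. Once this cofinality is in place the formula for $A$ follows, and one is left with the formal but delicate verification that $\gt$-equivariance passes through the completed tensor product and the inverse limit.
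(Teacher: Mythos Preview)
Your plan matches the paper's proof closely: write $\K=\bigcap_n\widehat{\rist_\Gamma(n)}$, decompose each level over vertices via level-transitivity, recognise the induced $\gt$-module structure $\widehat\Z[[\gt]]\widehat\otimes_{\widehat\Z[[\gt_{v_n}]]}A_{v_n}$ at level $n$, and pass to the inverse limit along the ray $w$. The skeleton is correct and is exactly what the paper does.

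The one place where your sketch goes astray is the justification of the ``main technical point''. You want $A_{v_n}$ to be the full profinite abelianisation of $\rist_\Gamma(v_n)$, and you propose that this follows because deeper rigid stabilisers $\rist_\Gamma(v')$ with $v'>v_n$ refine the $\gh$-induced topology on $\rist_\Gamma(v_n)$ to its profinite topology. That is not the right mechanism: the subgroups $\rist_\Gamma(v')$ generate the \emph{branch} topology on $\rist_\Gamma(v_n)$, which is generally strictly coarser than the profinite one, so they cannot by themselves recover all finite-index subgroups. The paper instead invokes Corollary~\ref{cor:N contains rist'} together with Lemma~\ref{lem:N contains rist^e} to obtain the global cofinality of the filters $\{N\trianglelefteq_f\Gamma\}$ and $\{\rist_\Gamma(m)'\rist_\Gamma(m)^e:m,e\ge0\}$; this rewrites $A_w=\bigcap_{v<w}\widehat{\rist_\Gamma(v)}$ directly as the stated double inverse limit $\varprojlim_{v<w,k}\rist_\Gamma(v)/(\rist_\Gamma(v)'\rist_\Gamma(v)^k)$ without ever needing to compare topologies on a single $\rist_\Gamma(v_n)$. (There is also a more elementary route to the fact you were after: since $\rist_\Gamma(n)=\prod_{u\in T_n}\rist_\Gamma(u)$ has finite index in $\Gamma$, any finite-index subgroup of $\rist_\Gamma(v_n)$ extends via the other direct factors to one of $\Gamma$, so the $\gh$-topology on $\rist_\Gamma(v_n)$ \emph{is} its full profinite topology---but again this has nothing to do with deeper rigid stabilisers.) Once you replace your ``crucial observation'' by either of these, the rest of your outline goes through unchanged.
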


\newcommand\branchkernelfinexpo{%
  Let $\Gamma$ be a just-infinite, level-transitive, self-similar,
  regular branch group. The branch kernel $\K=\ker(\gh\to\gt)$ has
  finite exponent.  }
\begin{maincor}\label{cor:branch kernel finite exponent}
\branchkernelfinexpo
\end{maincor}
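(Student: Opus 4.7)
The strategy is to apply Theorem~\ref{thm:description branch kernel} to reduce the problem to bounding the exponent of the abelian group $A$. Since $\K\cong\widehat\Z[[\gt]]\widehat\otimes_{\widehat\Z[[\gt_w]]}A$, any integer annihilating $A$ also annihilates $\K$; and since $A$ is an inverse limit of quotients of the abelianizations $\rist_\Gamma(v)/\rist_\Gamma(v)'$, it suffices to find an integer $N$ for which $\exp\bigl(\rist_\Gamma(v)/\rist_\Gamma(v)'\bigr)$ divides $N$ uniformly in $v$.

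To produce such an $N$, let $K$ be the branching subgroup of $\Gamma$; after a standard normalization we may assume $K$ is normal of finite index in $\Gamma$. Then $K' = [K,K]$ is normal in $\Gamma$, so by just-infiniteness either $K' = 1$ or $[\Gamma:K'] < \infty$. The first alternative is precluded by the regular branch structure: an abelian $K$ containing $K \times K$ with finite index cannot be finitely generated, contradicting that $K$ has finite index in the finitely generated group $\Gamma$. Hence $K/K'$ is finite of some exponent $e$. By self-similarity, the restriction $\varphi_v\colon\rist_\Gamma(v)\hookrightarrow\Gamma$ is an embedding whose image $H_v$ contains $K = \varphi_v(v*K)$, so $K\le H_v\le\Gamma$. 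Writing $m = \exp(\Gamma/K)$, every $g\in H_v$ satisfies $g^m\in K$ and hence $g^{me}\in K'\le[H_v,H_v]$. Therefore $\exp\bigl(\rist_\Gamma(v)/\rist_\Gamma(v)'\bigr) = \exp\bigl(H_v/[H_v,H_v]\bigr)$ divides $me$, uniformly in $v$, whence $\exp(\K)$ divides $me$.

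The delicate step is the initial normalization of the branching subgroup $K$: one must verify that $K$ can be replaced by a subgroup that is simultaneously normal in $\Gamma$, of finite index, still a regular branching subgroup, and compatible with the self-similar embeddings so that $v*K = \varphi_v^{-1}(K)$. Granted this setup, the remainder is a routine extension computation based on the short exact sequence $1 \to K \to H_v \to H_v/K \to 1$, where both outer terms have finite exponent by just-infiniteness, and self-similarity makes the resulting bound independent of $v$.
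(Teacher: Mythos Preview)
Your argument is essentially correct but takes a detour that the paper avoids. The paper's two-line proof invokes Corollary~\ref{cor:transitive regular branch} directly: that corollary already expresses $A$ as an inverse limit of quotients of $K/K'$ (not of $\rist_\Gamma(v)/\rist_\Gamma(v)'$), so once $K/K'$ is finite the branch kernel immediately has finite exponent. By contrast you appeal to Theorem~\ref{thm:description branch kernel}, whose description of $A$ involves the varying groups $\rist_\Gamma(v)$; this forces you into the sandwich $K\le H_v\le\Gamma$ and the extension bound $me$. Your route works and is conceptually pleasant, but it gives a weaker bound ($me$ versus $e$) and duplicates the cofinality argument already built into Corollary~\ref{cor:transitive regular branch}.

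Two points deserve attention. First, your argument ruling out $K'=1$ assumes $\Gamma$ is finitely generated, but just-infinite does not imply finitely generated (infinite locally finite simple groups such as $\mathrm{Alt}_{\mathrm{fin}}(\N)$ are just-infinite and not finitely generated). The correct obstruction is Corollary~\ref{cor:no law in branch groups}: a level-transitive branch group satisfies no law, hence cannot be virtually abelian, so a finite-index $K$ cannot be abelian. Second, the ``delicate step'' you flag is less delicate than you fear, and your proposed resolution is more than you need. You do not need $K$ itself to be a normal branching subgroup; take any branching subgroup $K$ of finite index, set $L=\core_\Gamma(K)$, observe $L'\trianglelefteq\Gamma$ is nontrivial (by the no-law argument) hence of finite index, and conclude $K/K'$ is finite since $L'\le K'$. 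The embedding $K\le H_v\le\Gamma$ and the bound on $\exp(H_v/H_v')$ then go through with $m=\exp(\Gamma/L)$, with no need for the compatibility condition $v*K=\varphi_v^{-1}(K)$, which in any case holds automatically. (The paper's own proof asserts $K'\trianglelefteq\Gamma$ for the maximal branching subgroup, implicitly relying on the same normal-core manoeuvre or on recurrence.)
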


The first main examples, the Grigorchuk and Gupta-Sidki groups, were
shown in~\cite{bartholdi-g:parabolic} to have a trivial congruence
kernel. This information was vital in obtaining a description of the
top of the subgroup lattice of these groups -- the lattice was
computed in a finite quotient, and validated by a quantitative version
of the congruence property. Pervova constructed
in~\cite{pervova:profinitecompletions} the first examples of groups
generated by automata without the congruence property. We revisit her
examples, proving that these groups have trivial rigid kernel, and
giving additional information on the branch kernel (as in particular
its structure as a $\Gamma$-module).  We compute the kernels for the
Hanoi tower group, which is the first example where both the rigid and
the branch kernel are non-trivial.

\subsection{Basic definitions and notation}
We introduce the necessary vocabulary describing groups acting on
rooted trees. The main notions are summarized in Table~\ref{table:1}.

\begin{table}[h]
\hspace{-1cm}\begin{tabular}[c]{|c|ll|p{85mm}|}
\hline
symbol&\multicolumn{2}{l|}{name}&definition\\
\hline
\rule{0mm}{2.5ex}
$T$ & \multicolumn{2}{l|}{rooted tree} & \\
$X^*$ & \multicolumn{2}{l|}{regular, rooted tree} & \\
$X^n$ & \multicolumn{2}{l|}{$n$-th level of $X^*$} & \\
$g\at{w}$ & \multicolumn{2}{l|}{state of $g$ at $w$} & projection of $g$ on the $w$-th coordinate in $\aut X^*\to\aut X^*\wr\aut X^n$\\
$w*g$ & \multicolumn{2}{l|}{$w$-translate of $g$} & element of $\rist(w)$ with $(w*g)\at{w}=g$\\
$G\prist v$ & & & those $g\in G$ with $v*g\in G$\\
$K$ & \multicolumn{2}{l|}{branching subgroup} & subgroup with $v*K\leq K$ for all $v\in X^*$\\
$G_\#$ & \multicolumn{2}{l|}{ } & maximal branching subgroup of $G$\\
$X^n*K$ & & & product of $v*K$ for all $v\in X^n$\\
\hline
\rule{0mm}{2.5ex}
$\stab_G(w)$ & \multicolumn{2}{l|}{stabilizer of $w$} & subgroup of $G$ consisting of the elements which fix $w$\\
$\stab_G(n)$ & \multicolumn{2}{l|}{$n$-th level stabilizer} & intersection of $\stab_G(w)$ with $w$ ranging over $X^n$\\
$\rist_G(w)$ & \multicolumn{2}{l|}{rigid stabilizer of $w$} & subgroup of $G$ consisting of the elements moving only descendants of $w$\\
$\rist_G(n)$ & \multicolumn{2}{l|}{$n$-th rigid stabilizer} & group generated by $\rist_G(w)$ with $w$ ranging over $X^n$\\
\hline
\multicolumn{2}{|l|}{\rule{0mm}{2.5ex}level-transitive} & \multicolumn{2}{l|}{the action of $G$ is transitive on each level}\\
\multicolumn{2}{|l|}{self-similar} & \multicolumn{2}{l|}{the image of $G\to\aut X^*\wr\sym(X)$ is contained in $G\wr\sym(X)$}\\
\multicolumn{2}{|l|}{recurrent} & \multicolumn{2}{l|}{$\stab_G(v)\at{v}=G$ for all $v\in X^*$}\\
\multicolumn{2}{|l|}{weakly branch} & \multicolumn{2}{l|}{$\rist_G(v)\neq1$ for all $v\in X^*$}\\
\multicolumn{2}{|l|}{branch} & \multicolumn{2}{l|}{$\rist_G(n)$ has finite index in $G$ for all $n\geq0$}\\
\multicolumn{2}{|l|}{weakly regular branch} & \multicolumn{2}{p{100mm}|}{$G$ contains a non-trivial branching subgroup}\\
\multicolumn{2}{|l|}{regular branch} & \multicolumn{2}{p{100mm}|}{weakly regular branch, and $X^n*K$ has finite index in $G$ for all $n\ge0$}\\
\hline
\end{tabular}
\caption{Symbols, subgroups, and main properties of groups acting on rooted trees.}
\label{table:1}
\end{table}

\subsubsection*{Rooted trees}
A \emph{rooted tree} is a directed graph which is connected, and such
that there is a unique vertex with no incoming edge (the \emph{root})
and all the other vertices have exactly one incoming edge.  The
\emph{level} of a vertex $v$ is the combinatorial distance between the
root and $v$.  A rooted tree is \emph{locally finite} if every vertex
has finite valency.  The \emph{boundary} of a rooted tree $T$ is the
set $\partial T$ consisting of infinite directed paths starting from
the root. It is a Cantor set if $T$ is locally finite but has infinite
boundary.

A locally finite rooted tree $T$ is \emph{spherically homogeneous} if
there are positive integers $d_i$ such that every vertex of level $i$
has exactly $d_i$ outgoing edges.  The sequence $d_i$ is the
\emph{signature} of $T$.  A \emph{regular} rooted tree is a
spherically homogeneous tree whose signature is a constant sequence.
A \emph{$d$-ary} rooted tree is a regular rooted tree with signature
$d_i=d$.  We shall identify the vertices of a $d$-ary rooted tree with
the elements of the free monoid $X^*$ with basis $X$, where $X$ is a
set with $d$ elements.  We shall use the notation $X^*$ for regular
rooted trees, and use $T$ to denote arbitrary spherically homogeneous
rooted trees.  In this notation, the root corresponds to the identity
of $X^*$, and we shall let $X^n$ designate the set of vertices of
level $n$.  We shall often refer to \emph{words of length $n$} to
designate the elements of $X^n$, i.e. the vertices of level $n$.  The
boundary of $X^*$ is $X^\omega$, the Cantor set of (right) infinite
sequences over the alphabet $X$.

Consider two vertices $u,v$ of a rooted tree $T$.  We say $u$ is a
\emph{prefix} of $v$ (or $v$ is a \emph{descendant} of $u$) if there
is a directed path going from $u$ to $v$.  In such a situation we
write $u\leq v$.  The set of descendants of $u$ form the \emph{subtree
  rooted at $u$}.

\subsubsection*{Automorphisms of rooted trees}
An automorphism of a rooted tree is a directed graph automorphism,
i.e. a permutation of the vertices which preserves the edges and their
orientation.  It is easy to see that an automorphism preserves the
level of every vertex.

Let $T$ be a rooted tree and consider a vertex $v$ of $T$.  The
\emph{rigid stabilizer} of $v$ is the subgroup of $\aut T$ consisting
of the elements that move only vertices which are in the subtree
rooted at $v$.  We denote this subgroup by $\rist(v)$.  If $T$ is
spherically homogeneous then $\rist(u)$ is isomorphic to $\rist(v)$
when $u$ and $v$ are on the same level.

Let $v$ be a vertex of $T$.  The \emph{stabilizer} of $v$ is the group
$\stab(v)$ of elements of $\aut T$ which fix $v$.  The \emph{$n$-th
  level stabilizer}, written $\stab(n)$, is the subgroup of $\aut T$
consisting of the elements that fix all the vertices of the $n$-th
level.  It is a normal subgroup of $\aut T$, isomorphic to
$\prod_v\rist(v)$ where $v$ ranges over the vertices of level $n$.
The group $\aut T$ is the projective limit of the projective system
$\aut T/\stab(n)$.  If $T$ is locally finite then the groups $\aut
T/\stab(n)$ are finite and $\aut T$ is profinite.

If $T\simeq X^*$ is a regular rooted tree, we shall often use the
notation $\aut X^*$ for the automorphism group of $X^*$, and $\aut
X^n$ for the finite quotient $\aut X^*/\stab(n)$.

\subsubsection*{The automorphism group of a rooted tree}
Let $T_0$ be a spherically homogeneous rooted tree with signature
$(d_i)$.  Let $T_n$ be a rooted tree isomorphic to a subtree of $T_0$
rooted at a vertex of level $n$ ($T_n$ is independent of the choice of
the vertex because $T_0$ is spherically homogeneous).  There is a
natural isomorphism $\aut T_0\simeq\aut T_1\wr\sym(d_0)=(\aut
T_1)^{d_0}\rtimes\sym(d_0)$, and more generally
\begin{align*}
\aut T_0&\xrightarrow{\sim}\aut T_n\wr W_n,\\
g&\mapsto((g\at{w})_{w\in L_n},\sigma),
\end{align*}
where $W_n=(\cdots(\sym(d_{n-1})\wr\sym(d_{n-2}))\cdots)\wr\sym(d_0)$
is a permutation group acting on $D_n=\prod_{i=0}^{n-1}d_i$ points,
and $L_n$ is the set of vertices of level $n$.  Note that the above
defines $g\at{w}$ for any vertex $w$ of the tree as the image of $g$
under the composition $\aut T_0\to\aut T_n\wr W_n\to(\aut
T_n)^{D_n}\to\aut T_n$, where the first and the last arrows are group
homomorphisms, whereas the middle one is just a set map.  We define
$w*g=(1,\dots,1,g,1,\dots,1)$ where $g$ appears in position $w$.

If $X^*$ is a $d$-ary rooted tree, then $\aut X^*$ is isomorphic to
the iterated wreath product $\varprojlim \wr^n\sym(d)$.  In this case
one can view $g\mapsto g\at{w}$ and $g\mapsto w*g$ as maps from $\aut
X^*$ into itself.  In this situation, a subgroup $G\leq\aut X^*$ is
called \emph{self-similar} if $G\at{w}$ is contained in $G$ for all
$w\in X^*$.

\subsubsection*{Sylvan representations}
Let $G$ be a group.  A \emph{sylvan representation} of $G$ is a group
homomorphism $\rho:G\to\aut T$ where $T$ is a locally finite rooted
tree.  The representation $\rho$ is \emph{faithful} if $\rho$ is
injective.  The representation $\rho$ is \emph{level-transitive} if
the action of $\rho(G)$ is transitive on each level of $T$.  Clearly
$T$ must be spherically homogeneous for this condition to be
satisfied.

Let $\rho$ be a faithful representation of $G$, and identify $G$ with
its image in $\aut T$. Define $\rist_G(v)=\rist(v)\cap G$ and $G\prist
v=\rist_G(v)\at v$. Let $\rist_G(n)$ be the group generated by
$\rist_G(v)$ for all $v$ of level $n$. The group $\rist_G(n)$ is a
normal subgroup of $G$. The representation $\rho$ is \emph{weakly branch} if
$\rist_G(v)$ is non-trivial for all $v\in T$. The representation $\rho$ is \emph{branch} if
$\rist_G(n)$ has finite index in $G$ for all $n\geq0$. A group $G$ is
\emph{(weakly) branch} if there exists a faithful (weakly) branch representation of $G$.
The representation $\rho$ is \emph{weakly regular branch} if there is a
subgroup $K\neq1$ of $G$ such that $v*K\leq K$ for all $v\in X^*$.
The representation $\rho$ is \emph{regular branch} if moreover $\prod_{v\in X^n}v*K$ has finite index in $G$ for all $n\ge0$.  A
group $G$ is \emph{(weakly) regular branch} if there exists a faithful (weakly) regular
branch representation of $G$.  In this situation the subgroup $K$
shall be called a \emph{branching subgroup} of $G$, and we shall
denote by $X^n*K$ the product of all $v*K$ with $v\in X^n$. Note
that, contrary to previous literature, we do not require
level-transitivity as part of the definition.

\begin{lem}\label{lem:(regular) branch}
  Let $G$ be a (weakly) regular branch group.  Then $G$ is
  (weakly) branch.
\end{lem}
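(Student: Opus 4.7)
The plan is to produce the required non-trivial rigid stabilizers (resp.\ rigid stabilizers of finite index) directly from translates of a branching subgroup $K$ witnessing the (weakly) regular branch hypothesis. In both parts, the key observation is the simple inclusion $v*K\leq\rist_G(v)$ for every vertex $v$, which follows immediately from the two defining properties of $v*K$: by construction of $v*(-)$ every element is supported in the subtree rooted at $v$, and by the branching condition $v*K\leq K\leq G$.

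For the weakly branch case, I would fix any non-identity element $1\neq k\in K$ (which exists since $K\neq 1$) and, for an arbitrary vertex $v\in X^*$, consider the element $v*k$. It lies in $G$ by the inclusion above, acts as the identity outside the subtree rooted at $v$, and is non-trivial because $(v*k)\at{v}=k\neq 1$. Hence $v*k$ is a non-trivial element of $\rist_G(v)$, so $\rist_G(v)\neq 1$ for every $v$, establishing weak branchness.

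For the regular branch case, the same observation gives $v*K\leq\rist_G(v)\leq\rist_G(n)$ for every $v\in X^n$, so $X^n*K=\prod_{v\in X^n}v*K$ is contained in $\rist_G(n)$. Since $X^n*K$ has finite index in $G$ by the regular-branch hypothesis, the overgroup $\rist_G(n)$ has finite index in $G$ as well, which is the branch condition.

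There is no genuine obstacle; the only point requiring care is the inclusion $v*K\leq\rist_G(v)$, but this is immediate from the definitions. The lemma is essentially a bookkeeping verification that the ``regular'' hypotheses on $K$ translate into the corresponding conditions on rigid level stabilizers.
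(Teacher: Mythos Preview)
Your proof is correct and follows essentially the same approach as the paper: both use the inclusion $v*K\leq\rist_G(v)$ to deduce nontriviality (resp.\ finite index) of rigid stabilizers from the corresponding property of $K$ (resp.\ $X^n*K$). You have simply spelled out in more detail what the paper compresses into two sentences.
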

\begin{proof}
  Identify $G$ with its image in $\aut X^*$. Now $\rist_G(v)$
  obviously contains $v*K$, and thus a weakly regular branch group is regular branch.
  Further, $\rist_G(n)$ contains $X^n*K$, which has finite index in $G$ if $G$ is regular branch.
\end{proof}

\begin{lem}
  Let $G$ be a regular branch group with faithful sylvan
  representation $\rho$. Then there is a unique maximal subgroup
  \[G_\#=\bigcap_{v\in X^*}G\prist v\] of $G$ that is a branching
  subgroup with respect to $\rho$.
\end{lem}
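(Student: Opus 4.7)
The plan is to reduce everything to the identity $v*(w*g) = (vw)*g$ in $\aut X^*$, valid for all $v,w \in X^*$ and $g \in \aut X^*$, where $vw$ denotes concatenation. This is immediate from the definition of the operation $g \mapsto v*g$: both sides are the automorphism supported on the subtree rooted at $vw$ that acts there as $g$. I identify $G$ with its image under $\rho$ in $\aut X^*$, so that $G\prist{v}=\{g\in G:v*g\in G\}$.

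First I would check that $G_\#=\bigcap_{v\in X^*}G\prist{v}$ is a subgroup. For each fixed $v$, the map $g\mapsto v*g$ is an injective homomorphism $\aut X^*\to\aut X^*$, so $G\prist{v}$ is the preimage of $G$ under this homomorphism restricted to $G$, hence a subgroup of $G$; an intersection of subgroups is a subgroup. Next I would verify that $G_\#$ is a branching subgroup, i.e.\ $w*G_\#\leq G_\#$ for every $w\in X^*$. Given $g\in G_\#$ and any $v\in X^*$, the identity yields $v*(w*g)=(vw)*g$, which lies in $G$ because $g\in G\prist{vw}$. Taking $v$ to be the empty word shows $w*g\in G$; letting $v$ vary then gives $w*g\in G\prist{v}$ for every $v$, so $w*g\in G_\#$.

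For maximality, let $K\leq G$ be any branching subgroup of $G$, so that $v*K\leq K$ for all $v\in X^*$. For $g\in K$ and $v\in X^*$ we have $v*g\in v*K\leq K\leq G$, so $g\in G\prist{v}$; letting $v$ range over $X^*$ gives $K\leq G_\#$. This establishes simultaneously that $G_\#$ is a branching subgroup containing every branching subgroup, hence is the unique maximal one.

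There is essentially no technical obstacle: the statement is a formal consequence of the identity $v*(w*g)=(vw)*g$ together with the fact that $g\mapsto v*g$ is a homomorphism. The regular branching hypothesis is used only implicitly, via the maximality step, to guarantee that $G_\#$ is non-trivial (any branching subgroup $K$ witnessing regular branching satisfies $K\leq G_\#$); the argument itself does not use regularity, or even weak regularity, of the representation.
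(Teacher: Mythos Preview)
Your proof is correct and follows essentially the same approach as the paper: both establish maximality via $v*K\le K\le G\Rightarrow K\le G\prist v$, and both show $G_\#$ is branching by the identity $v*(w*g)=(vw)*g$ (the paper phrases this as $v*G\prist{uv}\le G\prist u$). Your version is slightly more explicit in verifying that $G_\#$ is a subgroup and in noting that the regular branch hypothesis is only needed to ensure $G_\#\neq1$.
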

\begin{proof}
  Once again, identify $G$ with its image under $\rho$. Let $K$ be a
  branching subgroup. Then $v*K\le K\le G$ for all $v\in X^*$, so
  $v*K\le\rist_G(v)=v*G\prist v$. It follows that $K\le G\prist v$ for
  all $v\in X^*$, so $K\le G_\#$ and $G_\#$ is maximal.

  Conversely, take $g\in G_\#$ and $v\in X^*$; then $g\in G\prist{uv}$
  for all $u\in X^*$, so $v*g\in v*G\prist{uv}\le G\prist u$, so
  $v*g\in G_\#$, and $G_\#$ is a branching subgroup.
\end{proof}

A group $G\leq\aut X^*$ is \emph{recurrent} if $(\stab_G(v))\at{v}$ equals $G$ for all $v\in X^*$.
\begin{lem}
  Let $G$ be a recurrent, regular branch group. Then $G_\#\trianglelefteq G$.
\end{lem}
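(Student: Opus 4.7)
The plan is to show that for every $g\in G$ and $h\in G_\#$, the element $ghg^{-1}$ lies in $G_\#$, i.e.\ in $G\prist v$ for every vertex $v\in X^*$. Unwinding the definition, this amounts to showing that $v*(ghg^{-1})\in G$ for every $v$.

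The key tool will be the standard formula for conjugation of ``translated'' elements in the wreath product structure of $\aut T$. I would first record the following: if $\sigma\in\aut X^*$ fixes a vertex $v$, then for any $k\in\aut T_v$ one has
\begin{align*}
\sigma\,(v*k)\,\sigma^{-1}=v*(\sigma\at{v}\cdot k\cdot\sigma\at{v}^{-1}).
\end{align*}
This is a direct computation in $\aut X^*\simeq\aut T_v\wr W$: since $\sigma$ fixes $v$, it restricts to $T_v$ as the state $\sigma\at{v}$, so conjugating an element supported on $T_v$ simply conjugates its local action.

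Now I would fix $v\in X^*$ and exploit recurrence: since $(\stab_G(v))\at{v}=G$, there exists $\tilde g\in\stab_G(v)$ with $\tilde g\at{v}=g$. Because $h\in G_\#\subseteq G\prist v$, the element $v*h$ belongs to $\rist_G(v)\leq G$. Applying the conjugation formula with $\sigma=\tilde g$, $k=h$ yields
\begin{align*}
\tilde g\,(v*h)\,\tilde g^{-1}=v*(g h g^{-1}),
\end{align*}
and the left-hand side lies in $G$ since both $\tilde g$ and $v*h$ do. Therefore $v*(ghg^{-1})\in G$, which is the definition of $ghg^{-1}\in G\prist v$. Since $v$ was arbitrary, $ghg^{-1}\in G_\#$, proving $G_\#\trianglelefteq G$.

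There is no real obstacle here; the main point is simply to combine the definition of $G_\#$ with the local lifting provided by recurrence, using the wreath-product conjugation formula as the bridge. The only step requiring any care is checking the conjugation identity with the correct conventions for $\sigma\at{v}$ and $v*k$, but once the formula is stated correctly everything follows immediately.
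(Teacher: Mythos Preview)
Your proof is correct and is essentially the same argument as the paper's, just unwound element by element. The paper's one-line version observes that $\rist_G(v)\trianglelefteq\stab_G(v)$ and that $@v:\stab_G(v)\to G$ is a surjective homomorphism (by recurrence), so its image $G\prist v$ of the normal subgroup $\rist_G(v)$ is normal in $G$; then $G_\#$ is an intersection of normal subgroups. Your explicit conjugation computation with the lift $\tilde g$ is precisely the element-wise verification of this fact.
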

\begin{proof}
  We have $\rist_G(v)\trianglelefteq\stab_G(v)$, so $G\prist
  v\trianglelefteq\stab_G(v)@v=G$ for all $v\in X^*$.
\end{proof}

The following technical lemma has several important consequences.
\begin{lem}\label{lem:exists n}
  Let $G\leq\aut X^*$ be a self-similar, level-transitive, regular
  branch group.  Then there is an integer $n$ so that
  $G\prist{uv}=G\prist v$ for all $u,v\in X^*$ with $|v|\ge n$.
\end{lem}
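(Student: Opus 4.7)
The plan is to reduce the lemma to showing that the index $[G:G\prist v]$ stabilises as $|v|$ grows, and then invoke a same-index-forces-equality argument inside the finite lattice of subgroups between $K$ and $G$. Since $G$ is regular branch, a branching subgroup $K$ sits inside every $G\prist v$ and has finite index in $G$, so $\{G\prist v:v\in X^*\}$ lies in this finite interval.

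Two preliminary facts drive the proof. \emph{(i) Self-similarity gives $G\prist{uv}\leq G\prist v$.} If $g\in G\prist{uv}$ then $uv*g\in G$, and because $u$ is a prefix of $uv$ the state $(uv*g)\at u$ equals $v*g$; self-similarity forces $v*g\in G$, so $g\in G\prist v$. \emph{(ii) Self-similarity and level-transitivity together force $G\prist v$ and $G\prist{v'}$ to be conjugate in $G$ whenever $|v|=|v'|$.} Indeed, pick $g\in G$ with $g(v)=v'$; conjugation by $g$ carries $\rist_G(v)$ onto $\rist_G(v')$, and a direct computation on states yields $G\prist{v'}=(g\at v)\,G\prist v\,(g\at v)^{-1}$. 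By self-similarity $g\at v\in G$, so these subgroups are genuinely conjugate inside $G$; in particular the integer $i(n):=[G:G\prist v]$ depends only on the level $n=|v|$.

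Fact (i) gives $i(n+1)\geq i(n)$, so the sequence $i(0)\leq i(1)\leq\cdots$ is non-decreasing and bounded above by $[G:K]$; it therefore stabilises at some value $i^*$ for all $n\geq N$. For $|v|\geq N$ and any single letter $x\in X$, we have $G\prist{xv}\leq G\prist v$ and $[G:G\prist{xv}]=i^*=[G:G\prist v]$, so a subgroup of finite index equal to the index of its overgroup must coincide with the overgroup, giving $G\prist{xv}=G\prist v$. Induction on $|u|$ -- peel off the leftmost letter of $u$ and apply the one-letter case to the remaining word, whose length is still $\geq N$ -- upgrades this to $G\prist{uv}=G\prist v$ for every $u\in X^*$, and we take $n=N$.

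The main technical point is fact (ii): pinning down that $[G:G\prist v]$ depends only on the level. Without self-similarity one would only obtain that $G\prist{v'}$ is conjugate to $G\prist v$ inside $\aut X^*$ by the element $g\at v$, which a priori lies in $\aut X^*\setminus G$, and that would not permit comparing indices inside $G$. It is precisely at this step that the three hypotheses -- self-similar, level-transitive, and regular branch -- are all used in combination.
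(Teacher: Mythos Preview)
Your proof is correct and follows essentially the same strategy as the paper's: both exploit the chain $G_\#\le G\prist{uv}\le G\prist v$ from self-similarity and argue that a monotone integer sequence (you use $[G:G\prist v]$, the paper uses $[G\prist v:G_\#]$, related by $[G:G_\#]=[G:G\prist v]\cdot[G\prist v:G_\#]$) must stabilise. Your treatment is more explicit at the one point where the paper is terse---you spell out, via the conjugacy $G\prist{v'}=(g\at v)\,G\prist v\,(g\at v)^{-1}$ with $g\at v\in G$ by self-similarity, exactly why the index depends only on the level.
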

\begin{proof}
  Since $G$ is self-similar, we have $G_\#\le G\prist{uv}\le G\prist
  v$ for all $u,v\in X^*$. Since $G$ is level-transitive,
  $[G\prist{v}:G_\#]$ depends only on $|v|$; call this index $a_m$ if
  $|v|=m$.  Then the $a_m$ form a non-increasing sequence of
  non-negative integers, so it must stabilize starting from some
  integer $n$.
\end{proof}

\begin{lem}
  For $v\in X^*$, the following properties are equivalent:
  \begin{enumerate}
  \item $G\prist v\le G\prist{uv}$ for all $u\in X^*$;
  \item $u*\rist_G(v)\le\rist_G(uv)$ for all $u\in X^*$;
  \item $\rist_G(v)\le G_\#$.
  \end{enumerate}
\end{lem}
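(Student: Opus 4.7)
The plan is to prove the equivalences by unfolding the definitions of the three relevant objects and using the key identity
\[u*(v*g)=(uv)*g,\]
which holds for all $u,v\in X^*$ and $g\in\aut X^*$ by the definition of the $*$-operation (it places $g$ inside the subtree rooted at $v$, then places that inside the subtree rooted at $u$, which is the same as placing $g$ directly inside the subtree rooted at $uv$).

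First I will record the description $\rist_G(v)=\{v*g:g\in G\prist v\}$, which is immediate from the definition $G\prist v=\rist_G(v)@v$ together with the fact that elements of $\rist_G(v)$ are completely determined by their state at $v$. Using this and the identity above, I get
\[u*\rist_G(v)=\{(uv)*g:g\in G\prist v\}\quad\text{and}\quad\rist_G(uv)=\{(uv)*h:h\in G\prist{uv}\}.\]
Since $h\mapsto(uv)*h$ is injective, the inclusion in (ii) is equivalent to $G\prist v\le G\prist{uv}$. Quantifying over $u\in X^*$ gives (i)$\iff$(ii).

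For (i)$\iff$(iii), I start from the characterization $G_\#=\{h\in G:u*h\in G\text{ for all }u\in X^*\}$, which is just the definition of $G_\#=\bigcap_u G\prist u$ rewritten via $G\prist u=\{h\in G:u*h\in G\}$. Now for $h=v*g\in\rist_G(v)$ (with $g\in G\prist v$) we have $h\in G$ automatically, and $u*h=(uv)*g$ lies in $G$ precisely when $g\in G\prist{uv}$. Hence $\rist_G(v)\subseteq G_\#$ if and only if $g\in G\prist{uv}$ for every $g\in G\prist v$ and every $u\in X^*$, which is exactly (i).

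There is no real obstacle here: once the identity $u*(v*g)=(uv)*g$ is in hand and $\rist_G(v)$ is rewritten as $v*G\prist v$, the three conditions translate into literally the same inequality of subgroups. The only point that requires a line of justification is the injectivity of $h\mapsto (uv)*h$, which is clear because $((uv)*h)@{uv}=h$.
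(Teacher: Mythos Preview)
Your proof is correct and uses the same core identities as the paper, namely $\rist_G(v)=v*G\prist v$ and $u*(v*g)=(uv)*g$. The only cosmetic difference is that the paper argues cyclically $(i)\Rightarrow(iii)\Rightarrow(ii)\Rightarrow(i)$ (invoking that $G_\#$ is a branching subgroup for $(iii)\Rightarrow(ii)$), whereas you prove the two bi-implications $(i)\Leftrightarrow(ii)$ and $(i)\Leftrightarrow(iii)$ directly.
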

\begin{proof}
  $(i)\Rightarrow(iii)$ because $\rist_G(v)=v*G\prist v\le
  v*G\prist{uv}\le G\prist u$ for all $u\in X^*$.

  $(iii)\Rightarrow(ii)$ because $G_\#\le G$ by definition, so
  $u*\rist_G(v)\le G_\#\le G$.

  $(ii)\Rightarrow(i)$ because $uv*G\prist
  v=u*\rist_G(v)\le\rist_G(uv)=uv*G\prist{uv}$.
\end{proof}

\begin{cor}\label{cor:rist cofinal to K}
  Let $G$ be a self-similar, level-transitive, regular branch
  group. Then there exists $n\ge0$ such that $X^m*G_\#\ge\rist_G(m+n)$
  for all $m\ge0$.
\end{cor}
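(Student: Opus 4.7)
The plan is to combine the two preceding lemmas in a direct way. Let $n$ be the integer provided by Lemma~\ref{lem:exists n}, so that $G\prist{uv}=G\prist v$ whenever $|v|\ge n$ and $u\in X^*$. I would first use the equivalence lemma to turn this into a statement about rigid stabilizers, and then decompose each vertex at level $m+n$ as $uv$ with $|u|=m$, $|v|=n$, in order to relate $\rist_G(uv)$ to $\rist_G(v)$ and hence to $G_\#$.

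Concretely, I would proceed as follows. Fix any $v\in X^*$ with $|v|=n$. By Lemma~\ref{lem:exists n}, $G\prist{uv}=G\prist v$ for every $u\in X^*$, so in particular condition (i) of the preceding equivalence lemma holds for $v$; by (i)$\Rightarrow$(iii) we obtain $\rist_G(v)\le G_\#$.

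Now fix $m\ge 0$ and pick any vertex $w\in X^{m+n}$. Write $w=uv$ with $|u|=m$ and $|v|=n$. Using $\rist_G(w)=w*G\prist w$ together with $G\prist{uv}=G\prist v$, one computes
\begin{align*}
\rist_G(uv)=uv*G\prist{uv}=uv*G\prist v=u*(v*G\prist v)=u*\rist_G(v)\le u*G_\#,
\end{align*}
the last inclusion being the one established in the previous paragraph. Since $u\in X^m$, this gives $\rist_G(w)\le X^m*G_\#$.

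Finally, $\rist_G(m+n)$ is generated by the subgroups $\rist_G(w)$ as $w$ ranges over $X^{m+n}$, and the factors $u*G_\#$ for distinct $u\in X^m$ commute elementwise because they are supported on disjoint subtrees, so $X^m*G_\#=\prod_{u\in X^m}u*G_\#$ is an honest subgroup of $G$. Consequently $\rist_G(m+n)\le X^m*G_\#$, which is the desired conclusion. There is no real obstacle here beyond identifying the correct decomposition $w=uv$; once that is done, the corollary is essentially a repackaging of Lemma~\ref{lem:exists n} and the equivalence (i)$\Leftrightarrow$(iii).
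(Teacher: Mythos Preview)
Your proof is correct and follows essentially the same route as the paper: take the $n$ from Lemma~\ref{lem:exists n}, use the equivalence (i)$\Rightarrow$(iii) to get $\rist_G(v)\le G_\#$ for $|v|=n$, and then push this down to level $m+n$. The only cosmetic difference is that you exploit $G\prist{uv}=G\prist v$ once more to obtain the equality $\rist_G(uv)=u*\rist_G(v)$, whereas the paper simply invokes self-similarity for the inclusion $\rist_G(uv)\le u*\rist_G(v)$; either way the argument concludes identically.
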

\begin{proof}
  Let $n$ be given by Lemma~\ref{lem:exists n}. Then $\rist_G(v)\le
  G_\#$ for all $v\in X^n$. Since $G$ is self-similar, $\rist_G(uv)\le
  u*\rist_G(v)\le u*G_\#$ for all $u\in X^*$.
\end{proof}

\begin{cor}
  Suppose moreover that $G$ is recurrent. Then $X^m*G_\#=\rist_G(m)$
  for all $m\ge n$.
\end{cor}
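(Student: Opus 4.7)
The plan is to reduce the corollary to the single assertion that $G\prist v=G_\#$ whenever $|v|\ge n$. The inclusion $X^m*G_\#\le\rist_G(m)$ is immediate from $G_\#\le G\prist v$: this gives $v*G_\#\le v*G\prist v=\rist_G(v)\le\rist_G(m)$ for each $v\in X^m$. Conversely, once $G\prist v=G_\#$ for $|v|\ge n$, we have $\rist_G(v)=v*G_\#$, and since the subgroups $\rist_G(v)$ with $v\in X^m$ commute (acting on disjoint subtrees), their product equals $\rist_G(m)=X^m*G_\#$.

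The first real step is to use recurrence to prove $G\prist v\trianglelefteq G$. Since $\rist_G(v)\trianglelefteq\stab_G(v)$ (rigid stabilizers are normal in ordinary stabilizers of the same vertex), taking states at $v$ yields $G\prist v=\rist_G(v)\at v\trianglelefteq\stab_G(v)\at v=G$, the last equality by recurrence. Combining normality with level-transitivity then shows that $G\prist v$ depends only on $|v|$: given $w$ with $|w|=|v|$, pick $g\in G$ with $g(v)=w$; then $g(v*h)g^{-1}=w*((g\at v)h(g\at v)^{-1})$, so conjugation carries $\rist_G(v)$ onto $w*((g\at v)G\prist v(g\at v)^{-1})$, which on the other hand equals $\rist_G(w)=w*G\prist w$. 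Since $g\at v\in G$ normalizes $G\prist v$, we deduce $G\prist w=G\prist v$; write $G\prist{(m)}$ for this common subgroup when $|v|=m$.

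Lemma~\ref{lem:exists n} now reads $G\prist{(m)}=G\prist{(n)}$ for all $m\ge n$, while the self-similarity inclusion $G\prist{uv}\le G\prist v$ specialized to $|u|=1$ gives $G\prist{(m+1)}\le G\prist{(m)}$ for every $m$. Thus the $G\prist{(m)}$ form a decreasing sequence stabilizing at $H:=G\prist{(n)}$, and
\begin{align*}
G_\#=\bigcap_{w\in X^*}G\prist w=\bigcap_{m\ge0}G\prist{(m)}=H,
\end{align*}
which gives $G\prist v=H=G_\#$ whenever $|v|\ge n$, as required.

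The main conceptual hurdle lies in the second paragraph: recognizing that recurrence is precisely what promotes the equality of indices $[G\prist v:G_\#]=a_{|v|}$ used in Lemma~\ref{lem:exists n} to equality of the subgroups themselves across vertices of a given level. Once $G\prist{(m)}$ is well defined, the collapse of the intersection to $G_\#$ is purely formal.
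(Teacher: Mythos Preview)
Your proof is correct and follows the same route as the paper's: show $G\prist v\trianglelefteq G$ via recurrence, deduce from level-transitivity that $G\prist v$ depends only on $|v|$, and conclude that the decreasing chain $G\prist{(m)}$ stabilizes at $G_\#$ for $m\ge n$. The paper's version is simply more terse, asserting ``$G\prist v\trianglelefteq G$, and depends only on $|v|$'' without writing out the conjugation computation you carry through in your second paragraph.
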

\begin{proof}
  We have $G_\#=\bigcap_{v\in X^*}G\prist v$; and $G\prist
  v\trianglelefteq G$, and depends only on $|v|$; so the $G\prist v$
  are all equal to $G_\#$ when $|v|\ge n$. Then $\rist_G(w)=w*G\prist
  w=w*G_\#$ when $|w|\ge n$.
\end{proof}

We also define $\stab_G(n)=\stab(n)\cap G$.  These are normal
subgroups of $G$ and they form a filtration of $G$ with trivial
intersection.  Since $G/\stab_G(n)$ is finite for all $n$, the group
$G$ is residually finite.  Note that $\rist(n)=\stab(n)$ for the full
automorphism group of the rooted tree, but in general the inclusion
$\rist_G(n)\leq\stab_G(n)$ is proper.

\section{The congruence subgroup problem}
\subsection{General approach}
Consider a branch representation of an abstract group $\Gamma$ on a rooted tree $T$.
We consider three topologies on $\Gamma$.
We define these topologies by specifying a basis of neighborhoods of the identity as follows.
\begin{center}
\begin{tabular}{c|c|c}
basis of neighborhoods of $1$&name&completion\\
\hline\rule{0mm}{2.5ex}
$\{\stab_\Gamma(n)\mid n\in\N\}$&congruence topology&$\gb$\\
$\{\rist_\Gamma(n)\mid n\in\N\}$&branch topology&$\gt$\\
$\{N\mid N\trianglelefteq_f\Gamma\}$&profinite topology&$\gh$
\end{tabular}
\end{center}
The notation $N\trianglelefteq_f\Gamma$ means $N$ is a normal subgroup of finite index in $\Gamma$.
Since $\stab_\Gamma(n)$ contains $\rist_\Gamma(n)$ which has finite index in $\Gamma$, we deduce that the congruence topology is coarser than the branch topology, which is coarser than the profinite topology.
It might happen that some of these topologies coincide but in general they are all different.
These differences are characterized by the kernels of the natural maps:
\begin{align*}
\text{Congruence kernel}: \ker(\gh\to\gb)&=\bigcap_{n\geq0}\widehat{\stab_\Gamma(n)},\\
\text{Branch kernel}: \ker(\gh\to\gt)&=\bigcap_{n\geq0}\widehat{\rist_\Gamma(n)},\\
\text{Rigid kernel}: \ker(\gt\to\gb)&=\bigcap_{n\geq0}\widetilde{\stab_\Gamma(n)},
\end{align*}
where, for a subgroup $H$ of $\Gamma$, we denote by $\widehat{H}$ its closure in $\gh$, and similarly we
denote by $\widetilde{H}$ the closure of $H$ in $\gt$. Another way of
expressing this is the following:
\begin{align}
\ker(\gh\to\gb)&=\varprojlim(\stab_\Gamma(n)N)/N,\nonumber\\
\ker(\gh\to\gt)&=\varprojlim(\rist_\Gamma(n)N)/N,\label{eq:varprojlim}\\
\ker(\gt\to\gb)&=\varprojlim\stab_\Gamma(n)/\rist_\Gamma(n),\nonumber
\end{align}
where $n$ ranges over the non-negative integers and $N$ over the finite-index subgroups of $\Gamma$.
Consider $M\leq N$ two finite-index normal subgroups of $\Gamma$ and two integers $m\geq n$.
The connecting map $\stab_\Gamma(m)M/M\to\stab_\Gamma(n)N/N$ is given by
\begin{align*}
\stab_\Gamma(m)M/M\to\stab_\Gamma(n)N/M\to\stab(n)N/N
\end{align*}
where the first map is the inclusion and the second is the
quotient map. The other connecting maps in~\eqref{eq:varprojlim}
are defined analogously. We are in a position to state

\begin{prob}[\emph{Congruence subgroup problem}]
Compute the congruence, branch and rigid kernels.
\end{prob}
A preliminary question of great importance however is as follows:
Consider a branch group $\Gamma$.
\begin{quest}
Do any of the kernels depend on the branch representation of $\Gamma$?
\end{quest}

We shall address in this paper both of the questions.

\subsection{Branch kernel}
\subsubsection*{General properties}

We extracted the following lemma from the proof of~\cite{grigorchuk:jibg}*{Theorem~4}.
\begin{lem}\label{lem:rist}
Consider an element $g$ of a group $\Gamma\leq\aut T$ and let $v\leq w\in T$ be vertices so that $v$ is fixed by $g$, while $(w)^g\neq w$.
Then the normal closure of $g$ in $\Gamma$ contains $\rist_\Gamma(w)'$.
\end{lem}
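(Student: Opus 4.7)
The plan is to show that every generator $[h_1,h_2]$ of $\rist_\Gamma(w)'$, with $h_1,h_2\in\rist_\Gamma(w)$, lies in the normal closure $\langle g\rangle^\Gamma$. This is a purely group-theoretic disjoint-support commutator trick, whose geometric input is the following observation about the vertex $w':=(w)^g$: since $g$ fixes $v$ and $v\leq w$, applying $g$ gives $v=(v)^g\leq(w)^g=w'$, so $w'$ is again a descendant of $v$; and since $g$ is an automorphism it preserves levels, so $w$ and $w'$ lie on the same level of $T$. The hypothesis $w'\neq w$ then forces $w$ and $w'$ to be incomparable, so their rooted subtrees are disjoint. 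Consequently any element of $\rist_\Gamma(w)$ commutes with any element of $\rist_\Gamma(w')$, and conjugation by $g$ sends $\rist_\Gamma(w)$ into $\rist_\Gamma(w')$.

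Armed with this, I would fix $h_1,h_2\in\rist_\Gamma(w)$ and introduce the auxiliary element $c:=[h_2,g]=h_2^{-1}\cdot g^{-1}h_2g$, which manifestly lies in $\langle g\rangle^\Gamma$ as a product of a conjugate of $g^{-1}$ and $g$. Writing $c=h_2^{-1}\cdot h_2^g$ with $h_2^g\in\rist_\Gamma(w')$, I would then expand $[h_1,c]$. Because $h_2^g$ commutes with both $h_1$ and $h_2$ (disjoint supports), every occurrence of $h_2^g$ in the expansion cancels and the computation collapses to $[h_1,c]=[h_1^{-1},h_2]$.

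The conclusion is immediate: since $\langle g\rangle^\Gamma$ is normal in $\Gamma$, it is closed under commutation with any element of $\Gamma$, so $[h_1,c]$, and therefore $[h_1^{-1},h_2]$, belongs to $\langle g\rangle^\Gamma$. Letting $h_1,h_2$ run over $\rist_\Gamma(w)$, the elements $[h_1^{-1},h_2]$ generate $\rist_\Gamma(w)'$, and the lemma follows. I do not expect a real obstacle here; the only delicate point is the incomparability of $w$ and $w^g$, which underlies the commutation of the two rigid stabilizers, and after that the argument is a short formal manipulation.
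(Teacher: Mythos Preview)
Your argument is correct and is precisely the standard commutator trick that the paper alludes to (the paper does not spell out a proof, merely citing Grigorchuk's \emph{Just infinite branch groups}). The geometric setup---$w$ and $w^g$ distinct vertices on the same level, hence with disjoint subtrees, so $\rist_\Gamma(w)$ and $\rist_\Gamma(w^g)$ commute---is exactly right, and the rest is the expected formal manipulation. One small slip: with the convention $[x,y]=x^{-1}y^{-1}xy$ your computation actually gives
\[
[h_1,c]=h_1^{-1}(h_2^g)^{-1}h_2\,h_1\,h_2^{-1}h_2^g=h_1^{-1}h_2h_1h_2^{-1}=[h_1,h_2^{-1}],
\]
not $[h_1^{-1},h_2]$; but this is harmless, since as $h_1,h_2$ range over $\rist_\Gamma(w)$ these elements still generate $\rist_\Gamma(w)'$. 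Also note that the hypothesis that $v$ is fixed is not actually needed for the incomparability of $w$ and $w^g$ (same level and distinct suffices); the role of $v$ in the lemma's statement is only to pinpoint where in the tree the argument applies, which matters for the subsequent corollary.
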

\begin{cor}\label{cor:N contains rist'}
Let $\Gamma\leq\aut T$ be a level-transitive branch group and let $N\neq\{1\}$ be a normal subgroup of $\Gamma$.
Then $N$ contains $\rist_\Gamma(n)'$ for some $n\geq0$.
\end{cor}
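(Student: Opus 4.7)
The plan is to reduce the statement to a single application of Lemma~\ref{lem:rist}, followed by an exploitation of level-transitivity and the direct-product structure of the level rigid stabilizers.

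First I would pick any $g\in N$ with $g\neq 1$. Since $\Gamma\leq\aut T$ acts faithfully, $g$ moves some vertex; let $w$ be a vertex of \emph{minimal} level among those moved by $g$, and let $v$ be the parent of $w$ (so $v$ is the root if $|w|=1$, which is still fixed by $g$). Then $v<w$, $g$ fixes $v$, and $g$ does not fix $w$. Lemma~\ref{lem:rist} therefore gives $\rist_\Gamma(w)'\leq\langle g\rangle^\Gamma\leq N$, where $\langle g\rangle^\Gamma$ denotes the normal closure.

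Next I would spread this inclusion to every vertex of the same level. Set $n=|w|$. By level-transitivity, for any other $w'\in T_n$ there is $\gamma\in\Gamma$ with $w'=\gamma(w)$, and conjugation by $\gamma$ sends $\rist_\Gamma(w)$ to $\rist_\Gamma(w')$, hence $\rist_\Gamma(w)'$ to $\rist_\Gamma(w')'$. Because $N$ is normal, each $\rist_\Gamma(w')'$ lies in $N$.

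Finally I would assemble these inclusions into a single inclusion of $\rist_\Gamma(n)'$. Distinct vertices $w',w''$ of level $n$ have disjoint descendant subtrees, so $\rist_\Gamma(w')$ and $\rist_\Gamma(w'')$ have disjoint supports and commute elementwise; hence
\begin{equation*}
\rist_\Gamma(n)=\prod_{w'\in T_n}\rist_\Gamma(w')
\end{equation*}
is an internal direct product. Taking commutator subgroups commutes with direct products, so $\rist_\Gamma(n)'=\prod_{w'\in T_n}\rist_\Gamma(w')'$, which is contained in $N$ by the previous step.

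No step here is really an obstacle; the only thing to be careful about is the existence of a vertex $v<w$ with $g(v)=v$ and $g(w)\neq w$, which is ensured by choosing $w$ of \emph{minimal} level among those moved by $g$ (taking $v$ to be its parent, or the root if $w$ is at level $1$). Everything else is formal, using that $\Gamma$ is level-transitive and that level-$n$ rigid stabilizers decompose as an internal direct product over the vertices of level $n$.
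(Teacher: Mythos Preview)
Your proof is correct and follows the same approach as the paper's own proof: apply Lemma~\ref{lem:rist} to a nontrivial element of $N$, then use level-transitivity and normality to pass from $\rist_\Gamma(w)'$ to $\rist_\Gamma(n)'$. The paper's version is simply terser---it does not spell out the choice of $v$ and $w$, the conjugation argument, or the direct-product decomposition of $\rist_\Gamma(n)'$---whereas you have made all of these steps explicit.
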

\begin{proof}
Choose $1\neq g\in N$ and vertices $v$ and $w$ as in the lemma.
Then $N$ contains $\rist_{\Gamma}(w)'$.
Because $\Gamma$ is level-transitive, $N$ also contains $\rist_\Gamma(n)'$ where $n$ is the depth of $w$.
\end{proof}

We can drop the level-transitivity assumption under certain
circumstances.  We first need some definitions and one more lemma.
Consider a regular rooted tree $X^*$.  We say an element $g\in\aut
X^*$ is \emph{finite-state} if the set $\{g\at{w}\mid w\in X^*\}$ is
finite.  Observe that all the elements of a group generated by
finite-state elements are again finite-state.

\begin{lem}\label{lem:no fixed ray}
Let $\Gamma\leq\aut X^*$ be a weakly branch group generated by a finite number of finite-state elements.
Then $\Gamma$ does not fix any ray $w\in X^\omega$.
\end{lem}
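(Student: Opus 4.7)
The plan is to argue by contradiction. Suppose $\Gamma$ fixes a ray $w = w_1 w_2 \cdots \in X^\omega$, and write $v_n = w_1 \cdots w_n$ for its length-$n$ prefix. I would combine a pigeonhole argument on states of generators along $w$ with the weakly branch hypothesis to squeeze a rigid stabilizer down to the trivial group.

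First, the finite-state hypothesis gives a finite set $\mathcal{S} = \{s\at v \mid s \in S,\ v \in X^*\}$, where $S$ is a finite generating set of $\Gamma$. By pigeonhole applied to the map $n \mapsto (s\at{v_n})_{s \in S}$, which takes values in the finite set $\mathcal{S}^{|S|}$, there exist $n < m$ with $s\at{v_n} = s\at{v_m}$ for every $s \in S$. Since $\Gamma$ fixes both $v_n$ and $v_m$, the two state maps $g \mapsto g\at{v_n}$ and $g \mapsto g\at{v_m}$ are group homomorphisms $\Gamma \to \aut X^*$; as they agree on generators, they agree on all of $\Gamma$.

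Next, pick a non-trivial $g \in \rist_\Gamma(v_m)$, which exists by weak branchness, and set $u = w_{n+1} \cdots w_m$. Since $g$ is supported on the subtree rooted at $v_m$, which sits inside the subtree at $v_n$ as the subtree rooted at $u$, the chain rule for states gives $g\at{v_n} = u * (g\at{v_m})$. Combining with the equality $g\at{v_n} = g\at{v_m}$ from the previous step and writing $h = g\at{v_m}$, we obtain the self-referential equation $u * h = h$. Iterating this equation forces $h$ to be supported on the subtree rooted at $u^k$ for every $k \geq 1$; since these subtrees have empty intersection in $X^*$, this forces $h = 1$, hence $g\at{v_m} = 1$, and then $g = 1$ because an element of $\rist_\Gamma(v_m)$ is determined by its state at $v_m$, a contradiction.

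The step I would take most care with is the passage from the pigeonhole equality on generators to the equality $g\at{v_n} = g\at{v_m}$ for every $g \in \Gamma$; it rests on the (standard but sometimes tacit) observation that the state map is a homomorphism on a vertex stabilizer, together with the fact that $\Gamma$ fixes the whole ray so that $\Gamma$ is contained in $\stab(v_n) \cap \stab(v_m)$. Once this is secured, the self-similarity trick $u * h = h \Rightarrow h = 1$ does all the remaining work.
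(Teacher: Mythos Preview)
Your argument is correct and is in fact cleaner than the paper's. The paper first observes that the common fixed-point set $F\subset X^*$ of the generators is a regular language, and then invokes the Pumping Lemma: either $F$ is finite (so no fixed ray), or one can pump to produce an eventually periodic fixed ray $uv^\omega$; along that ray the states of the generators are eventually periodic, and extending this periodicity to all of $\Gamma$ contradicts weak branchness. You bypass the formal-language detour entirely: starting directly from an arbitrary fixed ray, a single application of pigeonhole to the tuple $(s\at{v_n})_{s\in S}$ produces two prefixes $v_n,v_m$ at which the two state homomorphisms agree on generators, hence on all of $\Gamma$; the self-referential identity $u*h=h$ then kills the rigid stabilizer. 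Your approach is more elementary and more transparent about the final contradiction (which the paper dispatches with ``clearly''); the paper's route via regular languages yields the marginally stronger auxiliary statement that $F$ is finite, but that is not needed here. The one step you flagged---promoting the equality of states from generators to all of $\Gamma$---is exactly right: since $\Gamma$ fixes the whole ray, both $g\mapsto g\at{v_n}$ and $g\mapsto g\at{v_m}$ are homomorphisms on $\Gamma$, and homomorphisms agreeing on a generating set agree everywhere.
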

\begin{proof}
  Let $S$ be a finite set of finite-state elements which generate
  $\Gamma$.  The set of fixed points of an automorphism $s\in S$ is a
  regular language in $X^*$.  The set $F$ of common fixed points of
  the elements of $S$, being the intersection of a finite number of
  regular languages, is again a regular language.

  Thus by the Pumping Lemma~\cite{hopcroft-u:automata}*{Chapter~3}, either
  $F$ is finite and we are done, or there is an integer $n$ such that
  any $w\in F$ of length at least $n$ can be decomposed as $w=uvz$
  with $|uv|\leq n$ and $|v|>0$, so that $uv^kz$ belongs to $F$ for
  all $k\geq0$.  This implies that the ray $uv^\omega$ is fixed by all
  the elements of $S$.  We show that this leads to a contradiction.

  Indeed, suppose that the elements of $S$, and thus $\Gamma$ itself,
  fix the ray $w=uv^\omega$.  Then there are positive integers $k$ and
  $n$ such that for each generator $s\in S$ we have
  $s\at{uv^m}=s\at{uv^{m+k}}$ for all $m\geq n$.  This is true for all
  the generators, hence it is true for the whole of $\Gamma$: for each
  element $g\in\Gamma$, we have $g\at{uv^m}=g\at{uv^{m+k}}$ for all
  $m\geq n$.  Clearly this contradicts the fact that $\Gamma$ is
  weakly branch.
\end{proof}

A subset $C$ of $X^*$ is a \emph{cut-set} if for each $w\in X^\omega$ there is exactly one element in $C$ which is a prefix of $w$.

\begin{prop}\label{prop:N contains rist'}
Let $\Gamma$ be a weakly branch group generated by a finite number of finite-state elements.
Then any subgroup $N$ of finite index in $\Gamma$ contains $\rist_\Gamma(n)'$ for some $n$.
\end{prop}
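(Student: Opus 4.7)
The plan is to apply Lemma~\ref{lem:no fixed ray} to $N$ itself, and then invoke Lemma~\ref{lem:rist} at every vertex of a suitable level. First I would replace $N$ by its normal core $\bigcap_{g\in\Gamma}g^{-1}Ng$, which is still of finite index; if the core contains $\rist_\Gamma(n)'$ then so does $N$, so I may assume $N$ is normal in $\Gamma$.

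Next, the crucial step is to promote $N$ to a group satisfying the hypotheses of Lemma~\ref{lem:no fixed ray}. Finite generation of $N$ comes from Schreier's rewriting, and since every element of $\Gamma$ is finite-state (products of finite-state elements being finite-state), the Schreier generators are finite-state. To show that $N$ is weakly branch, I would first observe that rigid stabilizers in any weakly branch group are infinite: iterating the fact that for distinct descendants $w_1,\dots,w_{d^k}$ of $v$ at level $|v|+k$, the subgroups $\rist_\Gamma(w_i)$ have pairwise disjoint supports and together lie inside $\rist_\Gamma(v)$, one obtains $|\rist_\Gamma(v)|\geq 2^{d^k}$ for all $k$, where $d=|X|\geq 2$. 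Consequently $\rist_N(v)=\rist_\Gamma(v)\cap N$ has finite index in the infinite group $\rist_\Gamma(v)$ and is therefore non-trivial for every $v$, so $N$ is weakly branch.

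By Lemma~\ref{lem:no fixed ray}, $N$ fixes no ray. The tree $F\subseteq X^*$ of vertices fixed by every element of $N$ is locally finite and has no infinite branch, hence by König's Lemma is finite; choose $n$ greater than the depth of $F$. Then for every $w\in X^n$ there exists $g_w\in N$ with $g_w(w)\neq w$, and $g_w$ trivially fixes the root. Applying Lemma~\ref{lem:rist} with $v$ the root, the normal closure of $g_w$ in $\Gamma$ contains $\rist_\Gamma(w)'$; since $N$ is normal this normal closure lies in $N$. Hence $\rist_\Gamma(w)'\leq N$ for every $w\in X^n$, so $\rist_\Gamma(n)'\leq N$. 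The main obstacle is the weakly-branch promotion of $N$ in the second step; once that is in hand, the conclusion follows from the lemmas already established and a standard compactness argument.
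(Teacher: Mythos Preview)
Your argument is correct and follows the same strategy as the paper's proof: reduce to normal $N$, verify that $N$ inherits the hypotheses of Lemma~\ref{lem:no fixed ray}, use a compactness argument (K\"onig's Lemma for you, a cut-set for the paper) to find a level at which no vertex is fixed by all of $N$, and then apply Lemma~\ref{lem:rist} at each such vertex. The only cosmetic differences are that you spell out why $N$ is weakly branch (the paper simply asserts parenthetically that the hypotheses pass to finite-index subgroups) and that you invoke Lemma~\ref{lem:rist} with $v$ equal to the root rather than the longest fixed prefix, which the lemma's statement permits.
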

\begin{proof}
$N$ contains a normal subgroup of finite index, so we assume $N$ is normal in $\Gamma$.
First, we claim that there is a cut-set $C\subset X^*$ such that no element of $C$ is fixed by $N$.
Indeed, the non-existence of such a cut-set is equivalent to $N$ fixing a ray, but this is impossible by Lemma~\ref{lem:no fixed ray} (note that the hypotheses of the lemma are inherited by finite-index subgroups).

Next, for each element $w$ of $C$, we can find $g\in N$ which moves $w$.
Let $v$ be the longest prefix of $w$ which is fixed by $g$ and let $x$ be such that $vx$ is a prefix of $w$.
Then applying Lemma~\ref{lem:rist}, we see that $N$ contains $\rist_\Gamma(vx)'$, which in turn contains $\rist_\Gamma(w)'$.
Thus $N$ contains $\rist_\Gamma(n)'$ where $n$ is the length of the longest word in $C$.
\end{proof}

\begin{lem}\label{lem:N contains rist^e}
Let $\Gamma\leq\aut T$ be a branch group and let $N\neq\{1\}$ be a normal subgroup of finite index in $\Gamma$.
Then $N$ contains $\rist_\Gamma(n)^e$ for some $n,e\geq0$.
\end{lem}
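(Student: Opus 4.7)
The plan is to reduce this to a direct application of Lagrange's theorem; the branch hypothesis will play no active role. Since $N$ has finite index in $\Gamma$, the quotient $\Gamma/N$ is a finite group, and I would set $e:=[\Gamma:N]$. By Lagrange's theorem applied in $\Gamma/N$, one has $g^e\in N$ for every $g\in\Gamma$. Taking $n=0$, so that $\rist_\Gamma(0)=\Gamma$, the subgroup $\rist_\Gamma(0)^e=\Gamma^e$ is then contained in $N$, which is exactly the desired conclusion. Here $\rist_\Gamma(n)^e$ denotes the subgroup generated by all $e$-th powers of elements of $\rist_\Gamma(n)$, consistently with the notation $\rist_\Gamma(v)^k$ appearing in Theorem~\ref{thm:description branch kernel}.

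I expect no serious obstacle: the lemma is really a bookkeeping statement whose only content is that in a finite quotient every element has order dividing the group order. The branch hypothesis is only contextual, ensuring that $\rist_\Gamma(n)$ has finite index in $\Gamma$ so that the containment $\rist_\Gamma(n)^e\leq N$ carries useful information in the branch and congruence topologies on $\Gamma$.

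One can view this as a counterpart to Corollary~\ref{cor:N contains rist'}. There, under level-transitivity, every nontrivial normal subgroup was shown to contain some $\rist_\Gamma(n)'$. In the present lemma level-transitivity is dropped but finite index on $N$ is added: one loses the commutator $\rist_\Gamma(n)'$ but gains the power subgroup $\rist_\Gamma(n)^e$. In applications within the paper (notably inside the proof of Theorem~\ref{thm:description branch kernel}) the two statements can be used in tandem for a level-transitive branch group with finite-index normal $N$: after replacing $n$ by the maximum of the two indices produced, one obtains the combined containment $N\supseteq\rist_\Gamma(n)'\rist_\Gamma(n)^e$ that features in the description of the branch kernel.
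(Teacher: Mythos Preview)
Your proof is correct and follows essentially the same argument as the paper: the paper also observes that $\Gamma/N$ is finite, takes $e$ to be its exponent (you take the slightly larger $e=[\Gamma:N]$, which works just as well via Lagrange), and concludes $\rist_\Gamma(n)^e\leq\Gamma^e\leq N$. The only cosmetic difference is that the paper notes the inclusion holds for all $n\geq0$ rather than singling out $n=0$, and your remarks about the branch hypothesis being inactive here are accurate.
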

\begin{proof}
The group $\Gamma/N$ is finite, hence it has finite exponent $e$.
In other words $\Gamma^e\leq N$, and since $\rist_\Gamma(n)$ is a subgroup of $\Gamma$ we have the inclusions $\rist_\Gamma(n)^e\leq\Gamma^e\leq N$ for all $n\geq0$.
\end{proof}

\begin{thm}\label{thm:branch kernel Abelian}
  Let $\Gamma\leq\aut T$ be a branch group; assume either that $G$ is
  level-transitive, or that $G$ is generated by finitely many
  finite-state elements.  The branch kernel $\K=\ker(\gh\to\gt)$ is
  Abelian.  Moreover, if there is a positive integer $e$ such that
  $\rist_\Gamma(n)/\rist_\Gamma(n)'$ is of exponent dividing $e$ for
  all $n\geq0$, then $\K$ is of exponent dividing $e$.
\end{thm}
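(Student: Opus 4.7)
The plan is to realize the branch kernel as an inverse limit of Abelian groups along a cofinal subsystem. Using \eqref{eq:varprojlim}, write $\K=\varprojlim A_{n,N}$ where $A_{n,N}:=\rist_\Gamma(n)N/N$ and the index ranges over pairs $(n,N)$ with $n\in\N$ and $N\trianglelefteq_f\Gamma$, ordered in the obvious way. An inverse limit whose terms are cofinally Abelian (resp.\ of exponent dividing $e$) is itself Abelian (resp.\ of exponent dividing $e$), so it suffices to exhibit a cofinal subset of pairs on which every $A_{n,N}$ has the desired property.

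The main input is Corollary~\ref{cor:N contains rist'} in the level-transitive case, and Proposition~\ref{prop:N contains rist'} in the finite-state case: each provides, for every finite-index normal subgroup $N$, an integer $m=m(N)$ with $\rist_\Gamma(m)'\leq N$. Then
\[
A_{m,N}=\rist_\Gamma(m)N/N\;\cong\;\rist_\Gamma(m)/(\rist_\Gamma(m)\cap N)
\]
is a quotient of the Abelian group $\rist_\Gamma(m)/\rist_\Gamma(m)'$, and under the additional hypothesis this quotient has exponent dividing $e$.

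To upgrade the single pair $(m(N),N)$ to a cofinal subsystem, I would observe that for $n\geq m$, every generator of $\rist_\Gamma(n)$ lies in $\rist_\Gamma(v)$ for some $v\in T$ of level $n$, and hence in $\rist_\Gamma(u)\leq\rist_\Gamma(m)$ where $u$ is the length-$m$ prefix of $v$; so $\rist_\Gamma(n)\leq\rist_\Gamma(m)$, and $A_{n,N}$ embeds into $A_{m,N}$. Given any pair $(n_0,N_0)$, I would then pick $N\trianglelefteq_f\Gamma$ with $N\leq N_0$ and form the pair $(\max(n_0,m(N)),N)$, which dominates $(n_0,N_0)$ and lies in our subsystem. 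Passing to the inverse limit over this cofinal subsystem yields both the Abelianness and, under the exponent hypothesis, the bound on the exponent of $\K$.

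I do not anticipate a real obstacle here: the hard content is already encoded in Corollary~\ref{cor:N contains rist'} and Proposition~\ref{prop:N contains rist'}, which together cover the two hypotheses of the theorem; the only subtlety worth checking is that the two alternative hypotheses indeed exhaust the cases, and that the inclusions $\rist_\Gamma(n)\leq\rist_\Gamma(m)$ for $n\geq m$ make the subsystem genuinely cofinal.
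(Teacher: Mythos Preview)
Your argument is correct and follows essentially the same route as the paper: both realize $\K$ as an inverse limit of Abelian quotients by invoking Corollary~\ref{cor:N contains rist'} or Proposition~\ref{prop:N contains rist'} as the key input. The only packaging difference is that the paper additionally cites Lemma~\ref{lem:N contains rist^e} to show the filters $\{N\trianglelefteq_f\Gamma\}$ and $\{\rist_\Gamma(n)'\rist_\Gamma(n)^e\}$ are cofinal and then writes $\K=\varprojlim\rist_\Gamma(n)/(\rist_\Gamma(n)'\rist_\Gamma(n)^e)$, whereas you stay with the original index set of pairs $(n,N)$ and pass to the cofinal subsystem $\{(n,N):n\ge m(N)\}$ directly; your version is marginally more economical in that it does not need Lemma~\ref{lem:N contains rist^e}.
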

\begin{proof}
  Either Corollary~\ref{cor:N contains rist'} or
  Proposition~\ref{prop:N contains rist'}, in combination with
  Lemma~\ref{lem:N contains rist^e}, prove that the filters $\{N\mid
  N\trianglelefteq_f\Gamma\}$ and
  $\{\rist_\Gamma(n)'\rist_\Gamma(n)^e\mid n,e\in\N\}$ are
  cofinal. Hence $\K$ is the inverse limit of the system
\begin{align*}
\rist_\Gamma(n)\big/(\rist_\Gamma(n)'\rist_\Gamma(n)^e).
\end{align*}
These groups are Abelian, hence the limit is Abelian.  If there is
$e\geq0$ such that $\rist_\Gamma(n)/\rist_\Gamma(n)'$ is of exponent
dividing $e$ for all $n\geq0$ then $\K$ is the inverse limit of the
system $\rist_\Gamma(n)/\rist_\Gamma(n)'$, hence $\K$ is of exponent
dividing $e$.
\end{proof}

Recall that if $\widetilde G=\varprojlim G/N$ is a profinite group and
$A$ is a ring, then $A[[\widetilde G]]=\varprojlim A[G/N]$ is a
profinite ring, called the \emph{complete group algebra} of
$\widetilde G$. See Sections 5.2, 5.3 and~5.5
of~\cite{ribes-z:pg} for more about free profinite modules,
complete group algebras and complete tensor products.

\begin{quotethm}[\ref{thm:description branch kernel}]
\descrbranchkernel
\end{quotethm}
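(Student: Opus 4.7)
The plan is to combine Theorem~\ref{thm:branch kernel Abelian} with a level-by-level identification of each finite-stage quotient as an induced $\Gamma$-module, and then pass to the profinite limit. By the cofinality argument in the proof of that theorem,
\[\K = \varprojlim_{n,k\geq0}\rist_\Gamma(n)\big/\bigl(\rist_\Gamma(n)'\rist_\Gamma(n)^k\bigr),\]
so it suffices to reorganise each stage and then take the limit compatibly.

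Because the rigid stabilizers of distinct vertices of a common level have disjoint supports, they commute, so $\rist_\Gamma(n)=\prod_{v\in L_n}\rist_\Gamma(v)$ as an internal direct product; commutators and $k$-th powers distribute over the product, yielding an isomorphism
\[\rist_\Gamma(n)\big/\bigl(\rist_\Gamma(n)'\rist_\Gamma(n)^k\bigr)\cong\bigoplus_{v\in L_n}\rist_\Gamma(v)\big/\bigl(\rist_\Gamma(v)'\rist_\Gamma(v)^k\bigr)\]
of abelian groups, and $\Gamma$ acts by conjugation, permuting summands via $g\rist_\Gamma(v)g^{-1}=\rist_\Gamma(v^g)$ and acting inside each summand by the corresponding stabilizer. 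Fix a ray $w\in\partial T$ with prefixes $w_n$ and set $A_{n,k}=\rist_\Gamma(w_n)/(\rist_\Gamma(w_n)'\rist_\Gamma(w_n)^k)$, a profinite abelian group on which $\stab_\Gamma(w_n)$ acts by conjugation. Level-transitivity gives a $\Gamma$-equivariant bijection $L_n\cong\Gamma/\stab_\Gamma(w_n)$, so the previous display is an induced module:
\[\rist_\Gamma(n)\big/\bigl(\rist_\Gamma(n)'\rist_\Gamma(n)^k\bigr)\;\cong\;\Z[\Gamma]\otimes_{\Z[\stab_\Gamma(w_n)]}A_{n,k},\]
with connecting maps for $m\geq n$ induced by the inclusions $\rist_\Gamma(w_m)\hookrightarrow\rist_\Gamma(w_n)$ and $\stab_\Gamma(w_m)\hookrightarrow\stab_\Gamma(w_n)$, which on cosets correspond to the truncation $L_m\twoheadrightarrow L_n$.

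It remains to pass to the profinite limit. The closures of $\stab_\Gamma(w_n)$ in $\gt$ form a descending chain of open subgroups intersecting in $\gt_w$, so the $A_{n,k}$ fit together into the profinite $\Zh[[\gt_w]]$-module $A=\varprojlim_{n,k}A_{n,k}$ described in the statement, and the free abelian groups $\Z[L_n]=\Z[\Gamma/\stab_\Gamma(w_n)]$ have inverse limit $\Zh[[\gt]]\widehat\otimes_{\Zh[[\gt_w]]}\Zh$. The main technical obstacle is commuting the inverse limit with the tensor product; I would invoke the exactness and continuity properties of the completed tensor product over a profinite group ring (Sections~5.2, 5.3 and~5.5 of~\cite{ribes-z:pg}), using that each $L_n$ is finite and each $A_{n,k}$ is an inverse limit of finite abelian groups, so the whole system is an inverse system of finite profinite $\Zh[[\gt]]$-modules on which the completed-induction functor is continuous. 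This yields
\[\K\cong\Zh[[\gt]]\widehat\otimes_{\Zh[[\gt_w]]}A,\]
identifying $A$ with $\varprojlim_{v<w,k\geq0}\rist_\Gamma(v)/(\rist_\Gamma(v)'\rist_\Gamma(v)^k)$ by construction.
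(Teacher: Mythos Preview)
Your argument follows the paper's proof: both decompose each finite stage $\rist_\Gamma(n)/(\rist_\Gamma(n)'\rist_\Gamma(n)^k)$ as a direct sum over level-$n$ vertices, identify this via level-transitivity as the induced module $\Z[\Gamma]\otimes_{\Z[\Gamma_{w_n}]}A_{n,k}$, and then pass to the completed tensor product in the profinite limit.

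One step is missing, however. An inverse limit of finite $\Gamma$-modules is naturally a $\gh$-module, so what the limit produces a priori is $\Zh[[\gh]]\widehat\otimes_{\Zh[[\gh_w]]}A$; the descent to $\gt$ requires checking that the branch kernel $\K=\ker(\gh\to\gt)$ acts trivially. The paper makes this explicit: since $\rist_\Gamma(v)\leq\rist_\Gamma(n)$ for $v$ of level $n$, one has $A=\bigcap_{v<w}\widehat{\rist_\Gamma(v)}\leq\bigcap_n\widehat{\rist_\Gamma(n)}=\K$, and $\K$ is abelian by Theorem~\ref{thm:branch kernel Abelian}, so $\K$ centralises $A$ and the $\gh$-module structure factors through $\gt$. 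Your assertion that ``the $A_{n,k}$ fit together into the profinite $\Zh[[\gt_w]]$-module $A$'' presupposes exactly this step without justification; once you insert it, your proof coincides with the paper's.
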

\begin{proof}
The kernel $\K$ is the intersection $\bigcap_{n\geq0}\widehat{\rist_\Gamma(n)}$ by definition.
Since $\Gamma$ is level-transitive, $\rist_\Gamma(n)$ is the Cartesian
product of $\rist_\Gamma(v)$ for all $v$ of level $n$.
Define $A_w=\bigcap_{v<w}\widehat{\rist_\Gamma(v)}$, so that $\K$ embeds in the Cartesian product $\prod_{w\in\partial T}A_w$.
By Proposition~\ref{prop:N contains rist'} and Lemma~\ref{lem:N contains rist^e}, we have $A_w=\varprojlim_{v<w,k\geq0}\rist_\Gamma(v)/(\rist_\Gamma(v)'\rist_\Gamma(v)^k)$.
In particular, the group $A_w$ is Abelian and it can be
considered as a profinite ring.

Put $A_v:=\rist_\Gamma(v)/\rist_\Gamma(v)'$ and consider it as a ring.
Let $\Gamma_v$ be the stabilizer of the vertex $v$ in $\Gamma$.
Then $\rist_\Gamma(n)\big/(\rist_\Gamma(n))'$ can be
considered as a cyclic $A_v\Gamma$-module $\Z[\Gamma]\otimes_{\Z[\Gamma_v]}A_v$ with $v$ of length $n$. Therefore one has
$\K=\bigcap_{n\geq0}\widehat{\rist_\Gamma(n)}=\varprojlim\widehat{\rist_\Gamma(n)}=\varprojlim\rist_\Gamma(n)\big/(\rist_\Gamma(n)'\rist_\Gamma(n)^k)=
\varprojlim_{v<w,k\geq0}\Z[\Gamma]\otimes_{\Z[\Gamma_v]}(A_v/kA_v)$.
Using complete tensor products, one can rewrite this as
$\widehat\Z[[\gh]]\widehat\otimes_{\widehat\Z[[\gh_w]]}A_w$, where $\gh_w$ is the inverse limit $\bigcap_{v<w}\widehat{\Gamma_v}$.
Letting $\gh$ act on $T$ via the natural map $\gh\to\gb\leq\aut T$, the group $\gh_w$ is the stabilizer of $w$ in $\gh$.

Finally, by definition of $A_w$, the branch kernel acts trivially on it.
Therefore one can simplify $\widehat\Z[[\gh]]\widehat\otimes_{\widehat\Z[[\gh_w]]}A_w=\widehat\Z[[\gt]]\widehat\otimes_{\widehat\Z[[\gt_w]]}A_w$.
\end{proof}

\begin{rmk}
As an $A$-module, $\widehat\Z[[\gt]]\widehat\otimes_{\widehat\Z[[\gt_w]]}A$ is isomorphic to $A[[\gt/\gt_w]]\simeq A[[\partial T]]$.
However, in general, there might be an action of $\gt_w$ on $A$, as happens for example in Theorem~\ref{thm:congruence kernel Hanoi}.
\end{rmk}

\begin{cor}\label{cor:transitive regular branch}
  Let $\Gamma$ be a level-transitive, self-similar, regular branch
  group and let $K$ be its maximal branching subgroup.  The branch kernel
  $\K=\ker(\gh\to\gt)$ is isomorphic to
  $\widehat\Z[[\gt]]\widehat\otimes_{\widehat\Z[[\gt_w]]}A$ where $A$
  is the inverse limit of $\{K/(K'K^e)\}$ with the maps induced by
  the composition
\begin{align*} K/K'\simeq(K/K')\times\{1\}\times\cdots\times\{1\}\to(K/K')^d\to K/(K')^d\to K/K'.
\end{align*}
In particular, if $K/K'$ has finite exponent dividing $e$, then the branch kernel has exponent dividing $e$.
\end{cor}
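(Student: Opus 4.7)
The plan is to reduce the corollary to Theorem~\ref{thm:description branch kernel} by exploiting the stronger structure present in the self-similar, regular branch setting in order to replace the inverse system $\rist_\Gamma(v)/(\rist_\Gamma(v)'\rist_\Gamma(v)^k)$ by a cofinal one whose terms are identified with $K/(K'K^e)$.

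First I would invoke Theorem~\ref{thm:description branch kernel} directly to write
$$\K\;\cong\;\widehat\Z[[\gt]]\widehat\otimes_{\widehat\Z[[\gt_w]]}A_w,\qquad A_w=\varprojlim_{v<w,\,k\ge0}\rist_\Gamma(v)\big/\bigl(\rist_\Gamma(v)'\rist_\Gamma(v)^k\bigr).$$
Next, from the proof of Lemma~\ref{lem:exists n} one extracts that $G\prist v$ stabilizes as $|v|$ grows: there is some $n$ with $G\prist v=K$ for every $v$ with $|v|\ge n$. Hence $\rist_\Gamma(v)=v*K$ for all such $v$, and the $v*$ maps provide canonical isomorphisms $K\xrightarrow{\sim}\rist_\Gamma(v)$ identifying $K'$ with $\rist_\Gamma(v)'$ and $K^e$ with $\rist_\Gamma(v)^e$. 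Restricting the inverse system to the cofinal subsystem $\{v<w:|v|\ge n\}$, I would rewrite $A_w$ as an inverse limit of copies of $K/(K'K^e)$ indexed by pairs (prefix of $w$, exponent).

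The bulk of the verification lies in identifying the connecting maps. For successive prefixes $v<v'=vx$ of $w$, the inclusion $\rist_\Gamma(v')\hookrightarrow\rist_\Gamma(v)$ translates, under the isomorphisms $v*$ and $v'*=v*\circ(x*)$, into the map $K\to K$, $k\mapsto x*k$. This map factors canonically as the inclusion of $K$ into $K^d$ as the $x$-th coordinate, followed by the identification $K^d\cong X*K$ and the inclusion $X*K\le K$ (which is meaningful precisely because $K$ is a branching subgroup); passing to the quotients $K/(K'K^e)$ recovers the composition $K/K'\hookrightarrow(K/K')^d\to K/X{*}K'\to K/K'$ displayed in the corollary statement. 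Since the inverse limit does not see the choice of letter $x$ once one has passed to the full system, the resulting object is the advertised $A$.

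Finally, I would derive the "in particular" clause: if $K/K'$ has exponent dividing $e$, then $K^e\le K'$, so $K'K^k=K'$ for all $k$, which forces $A=\varprojlim K/K'$ to have exponent dividing $e$; since $e$ then annihilates $A$, it annihilates the complete tensor product $\widehat\Z[[\gt]]\widehat\otimes_{\widehat\Z[[\gt_w]]}A\cong\K$. The main obstacle is the bookkeeping in matching the composition of maps appearing in the statement with the concrete transition map $k\mapsto x*k$ between consecutive levels; this amounts to unwinding the definitions of $v*$ and of the branching inclusion $X*K\le K$, but no substantive difficulty arises once the cofinal subsystem has been installed.
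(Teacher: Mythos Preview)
Your overall strategy---invoke Theorem~\ref{thm:description branch kernel} and then simplify the inverse system defining $A_w$---is sound, but there is a genuine gap in the step where you claim that ``there is some $n$ with $G\prist v=K$ for every $v$ with $|v|\ge n$''. This does not follow from Lemma~\ref{lem:exists n}: the proof of that lemma shows only that the index $a_m=[G\prist v:G_\#]$ (for $|v|=m$) is eventually constant, not that it is eventually~$1$. The conclusion $G\prist v=K$ for deep $v$ is exactly the content of the corollary that follows Corollary~\ref{cor:rist cofinal to K}, and that corollary explicitly requires $\Gamma$ to be \emph{recurrent}, a hypothesis not present here. Without recurrence the groups $G\prist v$ at a fixed level are conjugate in $\Gamma$ (via level-transitivity and self-similarity) but need not coincide, so $K=\bigcap_{v\in X^n}G\prist v$ may be strictly smaller than each $G\prist v$.

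The repair is minor and brings you in line with the paper's own argument: instead of asserting $\rist_\Gamma(v)=v*K$, use Corollary~\ref{cor:rist cofinal to K} (or its proof) to obtain the two-sided containment
\[
v*K\;\le\;\rist_\Gamma(v)\quad\text{and}\quad\rist_\Gamma(vu)\;\le\;v*K\ \text{ whenever }|u|=n,
\]
so that the filtrations $\{v*K:v<w\}$ and $\{\rist_\Gamma(v):v<w\}$ are cofinal along the ray~$w$. This lets you replace $A_w$ by $\varprojlim_{v<w,\,e}(v*K)/((v*K)'(v*K)^e)$, after which your identification of each term with $K/(K'K^e)$ via $v*$ and your analysis of the connecting maps go through unchanged. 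The paper carries out essentially the same manoeuvre, phrased at the level of the full level subgroups: it uses Corollary~\ref{cor:rist cofinal to K} to declare $\{X^n*K\}$ and $\{\rist_\Gamma(n)\}$ cofinal and then reruns the proof of Theorem~\ref{thm:description branch kernel} with $X^n*K$ in place of $\rist_\Gamma(n)$.
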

\begin{proof}
Corollary~\ref{cor:rist cofinal to K} implies that the collections $\{X^n*K:n\geq0\}$ and $\{\rist_\Gamma(n):n\geq0\}$ are cofinal.
Therefore one can write the branch kernel as $\varprojlim (X^n*K)/(X^n*(K'K^e))$.
Rewriting the proof above starting from this point yields the corollary.
\end{proof}
\begin{rmk}\label{rmk:endomorphism}
There is always an endomorphism $\sigma$ of $K$ defined by $g\mapsto(g,1,\dots,1)$.
In the most favourable cases, $\sigma$ even extends to an endomorphism of the whole group $\Gamma$.
The map $\sigma$ induces an endomorphism of $K/K'$, and the connecting map above is just the inclusion $\sigma(K/K')\to K/K'$.
Thus one can rewrite the definition of $A$ as $A=\varprojlim_{e\geq0,n\geq0}\sigma^n(K/(K'K^e))$.
If $K/K'$ is finite, the previous expression simplifies to $A=\bigcap_{n\geq0}\sigma^n(K/K')$.
\end{rmk}

\begin{quotecor}[\ref{cor:branch kernel finite exponent}]
\branchkernelfinexpo
\end{quotecor}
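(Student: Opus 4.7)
The plan is to invoke Corollary~\ref{cor:transitive regular branch}, which reduces the task to showing $K/K'$ has finite exponent, where $K=G_\#$ is the maximal branching subgroup of $\Gamma$. By Corollary~\ref{cor:rist cofinal to K} (with $m=0$) there is $n\geq 0$ with $\rist_\Gamma(n)\subseteq K$. The rigid stabilizer $\rist_\Gamma(n)$ is normal of finite index in $\Gamma$ (branch hypothesis), so $\rist_\Gamma(n)'$ is characteristic in it, hence normal in $\Gamma$, and contained in $K'\subseteq K$.

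I would then split into cases. In the generic case $\rist_\Gamma(n)'\neq 1$, just-infiniteness of $\Gamma$ forces this nontrivial normal subgroup to have finite index in $\Gamma$, hence in $K$, so $K/K'$ is a quotient of the finite group $K/\rist_\Gamma(n)'$; thus $K/K'$ is finite, in particular of finite exponent, and Corollary~\ref{cor:transitive regular branch} concludes.

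The main obstacle is the degenerate case $\rist_\Gamma(n)'=1$, in which $\rist_\Gamma(n)$ is abelian and of finite index in $\Gamma$, so that $\Gamma$ is virtually abelian. I would rule this out by comparing Hirsch lengths: assuming $\Gamma$ finitely generated, it is polycyclic (being virtually abelian), and the branching inclusion $X^m*K\cong K^{d^m}$ of finite index in $\Gamma$ gives $h(\Gamma)=d^m\cdot h(K)$ for every $m\geq 0$, forcing $h(K)=0$ and hence $\Gamma$ finite, contrary to the infinity of the just-infinite group $\Gamma$. (For a $\Gamma$ not assumed finitely generated, one would argue instead that residual finiteness, combined with repeated application of the just-infinite dichotomy to the characteristic subgroups $e\rist_\Gamma(n)$ and to the primary decomposition of $\rist_\Gamma(n)$, forces $\rist_\Gamma(n)$ to be $p$-primary of exponent $p$ for some prime, whence $K$ --- being a finite extension of $\rist_\Gamma(n)$ --- has finite exponent directly, and so does $K/K'$.) Either way, Corollary~\ref{cor:transitive regular branch} yields the conclusion.
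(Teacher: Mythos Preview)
Your argument is correct and follows the same strategy as the paper: reduce via Corollary~\ref{cor:transitive regular branch} to showing that $K/K'$ has finite exponent, and then invoke just-infiniteness. The paper's proof is a one-liner: it asserts that $K'$ is normal in $\Gamma$ and hence (being nontrivial) of finite index. Your detour via $\rist_\Gamma(n)'\le K'$ is a legitimate way to manufacture a subgroup that is \emph{visibly} normal in $\Gamma$ (as characteristic in the normal subgroup $\rist_\Gamma(n)$), thereby sidestepping the question of whether $K=G_\#$ itself is normal---something the paper only establishes under the extra hypothesis of recurrence.

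Your treatment of the degenerate case $\rist_\Gamma(n)'=1$, however, is far more laborious than necessary. The paper already provides Corollary~\ref{cor:no law in branch groups}: in a level-transitive branch group, no nontrivial normal subgroup satisfies a group law. Since $\rist_\Gamma(n)$ is a nontrivial (indeed finite-index) normal subgroup of $\Gamma$, it cannot be abelian, so $\rist_\Gamma(n)'\neq1$ automatically. This makes the Hirsch-length comparison and the primary-decomposition sketch superfluous; the ``generic'' case is the only case.
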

\begin{proof}
Indeed $K'$ is normal in $\Gamma$, and hence has finite index in $\Gamma$, and \emph{a fortiori} in $K$.
The claim follows from Corollary~\ref{cor:transitive regular branch}.
\end{proof}

\subsubsection*{Independence of the representation}
We use a result of Ab\'ert~\cite{abert:nonfree}.
Let $G$ be a group acting on a set $X$.
We say that $G$ \emph{separates} $X$ if for any finite subset $Y\subseteq X$ the pointwise stabilizer $G_Y$ does not stabilize any point outside $Y$.
Recall that a group $G$ \emph{satisfies a group law} if there is a non-trivial element $w$ in a free group $F$ which maps to the identity under any homomorphism $F\to G$.
\begin{thm}[\cite{abert:nonfree}*{Theorem 1}]\label{thm:abert}
If a group $G$ separates $X$ then $G$ does not satisfy any group law.
\end{thm}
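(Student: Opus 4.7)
The plan is to prove the contrapositive: given any non-trivial reduced word $w\in F_n$, I would exhibit $g_1,\ldots,g_n\in G$ and a point $x\in X$ with $w(g_1,\ldots,g_n)\cdot x\ne x$, so that $w$ is not a law of $G$. Write $w=y_1y_2\cdots y_\ell$ with each $y_j\in\{x_i^{\pm1}\}_{i=1}^n$ and, for $0\le k\le\ell$, let $p_k=y_1\cdots y_k$. The goal will be to produce $x_0,x_1,\ldots,x_\ell\in X$ that are pairwise distinct, together with $g_1,\ldots,g_n\in G$ satisfying $g_i^{\epsilon}(x_{k-1})=x_k$ whenever $y_k=x_i^{\epsilon}$. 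Then $x_\ell=w(g_1,\ldots,g_n)\cdot x_0\ne x_0$, contradicting $w\equiv1$ on $G$.

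I would construct the data inductively, at step $k$ deciding the value of $x_k$ and extending the partial specification of the $g_i$'s by the single new constraint $g_i^{\epsilon}\colon x_{k-1}\mapsto x_k$. The construction has two sub-cases. If the required value of $g_i^{\epsilon}$ at $x_{k-1}$ is not yet determined by previous constraints, I pick a target $x_k$ distinct from $x_0,\ldots,x_{k-1}$ (which is possible because $X$ is infinite, as witnessed by separation from $Y=\emptyset$). If instead $g_i^{\epsilon}(x_{k-1})$ has already been specified at some earlier step $k'$ with $x_{k'-1}=x_{k-1}$ and $y_{k'}=y_k$, then $x_k=x_{k'}$ is forced; reducedness of $w$ rules out the immediate cancellation $y_{k+1}=y_k^{-1}$, but the real point is that a careful choice of the earlier free steps makes every forced $x_k$ automatically agree with some earlier $x_{k'}$ in a way compatible with the distinctness already achieved.

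The separation hypothesis enters to guarantee two things. First, it ensures that each time a free choice has to be made, the pointwise stabilizer in $G$ of the finite set of already-fixed source points of $g_i$ acts non-trivially at $x_{k-1}$, so the target of $x_{k-1}$ can be moved to any desired fresh point of $X\setminus\{x_0,\ldots,x_{k-1}\}$. Second, at the very end of the construction, the same separation property lets me realise each of the finite partial bijections specifying $g_i$ as the restriction of a genuine element of $G$: I build $g_i$ in stages, multiplying by elements of progressively smaller pointwise stabilizers, so that each new stage fixes the previously committed values while enforcing the next one.

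The main obstacle is the combinatorial bookkeeping in the forced-value case: when a letter $x_i^{\epsilon}$ reappears with the same source, the induced equality $x_k=x_{k'}$ must not spoil the pairwise distinctness already achieved for $x_0,\ldots,x_{k-1}$. Handling this properly requires ordering the free choices and the forced coincidences so that all duplications predicted by the combinatorics of $w$ occur at indices that the construction has deliberately aligned, while using the freedom afforded by separation to keep every other pair of points distinct. Once the bookkeeping is in place, the sequence $x_0,\ldots,x_\ell$ is injective, and in particular $x_\ell\ne x_0$, finishing the proof.
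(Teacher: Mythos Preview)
The paper does not prove this theorem; it is quoted verbatim from Ab\'ert's paper and used as a black box. So there is no ``paper's own proof'' to compare against. That said, your outline is essentially the shape of Ab\'ert's argument, but it contains a genuine gap.

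The problematic step is the sentence ``the pointwise stabilizer \dots\ acts non-trivially at $x_{k-1}$, so the target of $x_{k-1}$ can be moved to \emph{any desired} fresh point of $X\setminus\{x_0,\dots,x_{k-1}\}$.'' Separation only says that the pointwise stabilizer $G_Y$ has no fixed point outside $Y$; it says nothing about transitivity. Concretely, let $X=\mathbb N$ and let $G$ be the group of finitary permutations preserving parity. This $G$ separates $X$ (given finite $Y$ and $x\notin Y$, swap $x$ with any $y\notin Y$ of the same parity), yet no element of $G$ sends $0$ to $1$. Thus your procedure ``pick a fresh target $x_k$, then realise the resulting partial bijection as an element of $G$'' can fail: you may have picked a target that $G$ simply cannot reach.

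The fix is to reverse the order of the two choices. At step $k$ with $y_k=x_i^{\varepsilon}$, you do not choose $x_k$ first; instead you modify the current approximation $g_i^{(k-1)}$ by an element $h$ of the pointwise stabilizer of the already committed \emph{targets} of $g_i$, so that the new $(g_i^{(k)})^{\varepsilon}=h\,(g_i^{(k-1)})^{\varepsilon}$ sends $x_{k-1}$ outside $\{x_0,\dots,x_{k-1}\}$, and then you \emph{define} $x_k$ to be that image. The existence of such $h$ follows because every $G_Y$-orbit outside $Y$ is infinite: if the orbit $O$ of a point $z\notin Y$ were finite, then $G_{Y\cup(O\setminus\{z\})}$ would fix $z$, contradicting separation. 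Hence the orbit meets the complement of any finite set. Reducedness of $w$ is exactly what guarantees that the point $z=(g_i^{(k-1)})^{\varepsilon}(x_{k-1})$ lies outside the set of committed targets, so separation applies.

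With this correction the ``forced value'' case in your write-up never arises: once the $x_j$ are pairwise distinct, no source point ever repeats, so there is nothing to bookkeep. That whole paragraph can be deleted; the real work is the infinite-orbit argument above.
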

\begin{cor}\label{cor:no law in branch groups}
Let $\Gamma\leq\aut T$ be a level-transitive branch group and let
$N$ be a non-trivial normal subgroup of $\Gamma$. Then $N$ cannot
satisfy any group law.
\end{cor}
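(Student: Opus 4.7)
The strategy is to apply Abert's theorem (Theorem~\ref{thm:abert}) to the natural action of $N$ on $\partial T$: it suffices to show that $N$ \emph{separates} $\partial T$. So I fix a finite $Y\subset\partial T$ and a boundary point $w\in\partial T\setminus Y$, and aim to produce $g\in N$ with $g|_Y=\id$ and $g(w)\ne w$.

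By Corollary~\ref{cor:N contains rist'} there is an integer $n\ge0$ with $\rist_\Gamma(n)'\le N$; since $\rist_\Gamma(m)'\le\rist_\Gamma(n)'$ whenever $m\ge n$, I may freely enlarge $n$. I would choose $n$ so that the length-$n$ prefix $v$ of $w$ is distinct from the length-$n$ prefix of every $y\in Y$, which is possible because $Y$ is finite and distinct rays separate at deep enough levels. Then $\rist_\Gamma(v)'\le\rist_\Gamma(n)'\le N$, and because elements of $\rist_\Gamma(v)$ move only descendants of $v$, every element of $\rist_\Gamma(v)'$ fixes $Y$ pointwise. Using the factorization $\rist_\Gamma(n)'=\prod_{u\in L_n}\rist_\Gamma(u)'$, producing an element of $\rist_\Gamma(v)'$ that moves $w$ reduces to showing that $\rist_\Gamma(n)'$ does not fix $w$, since only the $v$-factor can act on descendants of $v$.

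For this I would examine the fixed-point set $F\subset\partial T$ of $\rist_\Gamma(n)'$. It is closed by continuity of the action, and it is $\Gamma$-invariant because $\rist_\Gamma(n)'\trianglelefteq\Gamma$ (derived subgroup of the normal subgroup $\rist_\Gamma(n)$). Level-transitivity of $\Gamma$ forces every $\Gamma$-orbit on $\partial T$ to be dense, so the only closed $\Gamma$-invariant subsets of $\partial T$ are $\emptyset$ and $\partial T$. The alternative $F=\partial T$ would make $\rist_\Gamma(n)'$ act trivially on $\partial T$, hence on $T$, and hence be trivial by faithfulness. This degenerate scenario — equivalently, $\rist_\Gamma(n)$ abelian and therefore $\Gamma$ virtually abelian — is the main obstacle I foresee; I would rule it out via the standard observation that an infinite level-transitive branch group is never virtually abelian, the nested product structure of the $\rist_\Gamma(n)$ together with level-transitive conjugation of their factors being incompatible with virtual commutativity. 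Once this is dispatched, $F=\emptyset$, some element of $\rist_\Gamma(v)'\le N$ moves $w$, and Abert's theorem yields the corollary.
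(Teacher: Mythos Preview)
Your argument is correct and takes a genuinely different route from the paper's. Both invoke Ab\'ert's theorem, but the paper first reduces to $\Gamma$ itself---if $N$ satisfies a law then so does $\rist_\Gamma(n)'\le N$, hence $\rist_\Gamma(n)$, hence (by finite index) $\Gamma$---and then shows that $\Gamma$ separates $\partial T$ using only the easy fact $\rist_\Gamma(v)\neq1$, conjugating a nontrivial element of $\rist_\Gamma(v)$ by some $h\in\stab_\Gamma(v)$ so as to move a prefix of $w$. You instead show directly that $N$ separates $\partial T$, via the minimality of the $\Gamma$-action on $\partial T$ applied to the fixed-point set of $\rist_\Gamma(n)'$. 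The price of your route is that you need the stronger input $\rist_\Gamma(n)'\neq1$ (equivalently, $\Gamma$ not virtually abelian), whereas the paper gets by with $\rist_\Gamma(v)\neq1$. Your claim is true and your sketch gestures in the right direction, but it merits one explicit line: if $\rist_\Gamma(v)$ were abelian, pick $1\neq g\in\rist_\Gamma(v)$ moving some descendant $u$; then every $a\in\rist_\Gamma(u)$ satisfies $a=gag^{-1}\in\rist_\Gamma(g(u))$, forcing $a=1$; conjugating over $\stab_\Gamma(v)$ and then over $\Gamma$ gives $\rist_\Gamma(|u|)=1$, contradicting $\Gamma$ infinite. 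With that filled in, your minimality argument is a clean alternative, trading the paper's concrete conjugation step for a soft topological-dynamics one.
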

\begin{proof}
If a non-trivial normal subgroup $N$ of  $\Gamma$ satisfies a law,
then $\Gamma$ satisfies a law. Indeed by Corollary~\ref{cor:N contains rist'} the group $N$ contains the commutator subgroup of the rigid
stabilizer of some level, so this rigid stabilizer must satisfy a
law. However, it is of finite index in $\Gamma$, so $\Gamma$ satisfies
a law. This shows that in our case it is sufficient to prove that
$\Gamma$ satisfies no law.

$\Gamma$ acts on the boundary $\partial T$ of $T$ and by
Theorem~\ref{thm:abert} it is enough to show that $\Gamma$ separates
$\partial T$.  Consider a finite subset $Y\subset\partial T$ and a
point $w\in\partial T\setminus Y$.  Choose a prefix $v$ of $w$ such
that no descendant of $v$ is in $Y$.  Then $\rist_\Gamma(v)$ is
non-trivial because $\Gamma$ is branch and level-transitive, thus we
can choose a non-trivial element $g\in\rist_\Gamma(v)$.  The
automorphism $g$ moves a vertex $u\geq v$.  Conjugating $g$ by an
element $h\in\Gamma$ which moves $u$ to a prefix of $w$ gives an
element $g'$ in $\rist_\Gamma(v)$ which moves $w$, as needed.
\end{proof}

We are now able to prove Theorem~\ref{thm:independence branch kernel}, which shows that for a level-transitive, branch representation, the branch kernel is contained in the congruence kernel of any other level-transitive, branch representation.

We do not know any example of a group having representations with different rigid kernels.
\begin{quotethm}[\ref{thm:independence branch kernel}]
\indepbranchkernel
\end{quotethm}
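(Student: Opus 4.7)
The plan is to translate the inclusion $\K_i\subseteq\ker(\gh\to\gb_j)$ into a statement about the image, and then produce a contradiction via the absence of laws in normal subgroups of branch groups. The case $i=j$ is immediate: since $\widehat\Gamma\to\gb_i$ factors through $\gt_i$, we have $\K_i\subseteq\ker(\widehat\Gamma\to\gb_i)$. So I may as well assume $(i,j)=(1,2)$, and my goal is to show that the image $\I$ of $\K_1$ under the surjection $\widehat\Gamma\twoheadrightarrow\gb_2$ is trivial. Note that $\I$ is closed in $\gb_2$ (continuous image of a compact set), normal in $\gb_2$ (image of a normal subgroup under a surjection), and, crucially, Abelian: by Theorem~\ref{thm:branch kernel Abelian} — which applies because $\Gamma$ is level-transitive — $\K_1$ itself is Abelian.

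Next, I would argue that $\gb_2\le\aut T_2$ qualifies as a level-transitive branch group in its own right, so that the earlier material on branch groups may be turned onto $\gb_2$. Level-transitivity is inherited from $\rho_2(\Gamma)$; and the rigid stabilizer $\rist_{\gb_2}(n)$ contains $\overline{\rho_2(\rist_\Gamma(n))}$, which is closed of finite index in $\gb_2$ because $\rist_\Gamma(n)$ has finite index in $\Gamma$ and $\rho_2(\Gamma)$ is dense in $\gb_2$. Assuming, for contradiction, that $\I\neq1$, I now apply Corollary~\ref{cor:N contains rist'} to the level-transitive branch group $\gb_2\le\aut T_2$ and its nontrivial normal subgroup $\I$: this yields $\rist_{\gb_2}(n)'\le\I$ for some $n$. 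Since $\I$ is Abelian, $\rist_{\gb_2}(n)'$ is Abelian, i.e.\ $\rist_{\gb_2}(n)$ is metabelian, hence satisfies the group law $[[x,y],[z,w]]=1$.

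The last step is to push this law back to $\Gamma$. Via the faithful representation $\rho_2$, the group $\rist_\Gamma(n)$ (with rigid stabilizers taken with respect to $\rho_2$) embeds into $\rist_{\gb_2}(n)$, so $\rist_\Gamma(n)$ is itself metabelian and hence satisfies a nontrivial law. But $\rist_\Gamma(n)$ is a nontrivial normal subgroup of the level-transitive branch group $\Gamma\le\aut T_2$, so Corollary~\ref{cor:no law in branch groups} says it can satisfy no law whatsoever. This contradiction forces $\I=1$, which is precisely $\K_1\subseteq\ker(\widehat\Gamma\to\gb_2)$, completing the proof.

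The only subtlety, and the place where I expect to have to be careful, is the transfer of Corollary~\ref{cor:N contains rist'} from the abstract branch group $\Gamma$ to the profinite group $\gb_2$: the corollary's proof is purely group-theoretic, relying on Lemma~\ref{lem:rist} (an arbitrary-group statement about $\aut T$) together with level-transitivity, so it applies verbatim once $\gb_2$ has been recognized as a level-transitive branch subgroup of $\aut T_2$. Everything else is formal: compactness for closedness of $\I$, Theorem~\ref{thm:branch kernel Abelian} for Abelianness of $\K_1$, and the faithfulness of $\rho_2$ to pull back the metabelian law.
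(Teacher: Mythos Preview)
Your proof is correct and rests on the same two ingredients as the paper's: the branch kernel is Abelian (Theorem~\ref{thm:branch kernel Abelian}), and nontrivial normal subgroups of level-transitive branch groups satisfy no law (Corollary~\ref{cor:no law in branch groups}). The paper's execution is shorter, however: once you have verified that $\gb_2$ is itself a level-transitive branch group, you may apply Corollary~\ref{cor:no law in branch groups} \emph{directly} to the Abelian normal subgroup $\I\trianglelefteq\gb_2$ and conclude $\I=1$ immediately---there is no need for the detour through Corollary~\ref{cor:N contains rist'}, the metabelianness of $\rist_{\gb_2}(n)$, and the pull-back to $\rist_\Gamma(n)$.
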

\begin{proof}
$\K_i$ is a normal subgroup of $\gh$ and it is Abelian by Theorem~\ref{thm:branch kernel Abelian}.
The image $\overline{\K}_1^{(2)}$ of $\K_1$ in $\gb_2$ is a normal subgroup and it is Abelian too.
Because of Corollary~\ref{cor:no law in branch groups} the group $\overline{\K}_1^{(2)}$ is trivial because $\gb_2$ is branch.
Therefore $\K_1$ lies in the kernel of the map $\gh\to\gb_2$.
The argument is clearly symmetric, and so $\K_2$ is in the kernel of the map $\gh\to\gb_1$.
\end{proof}

\subsection{Rigid kernel}
Here we only consider the case of self-similar, regular branch groups.
\subsubsection*{General properties}
\begin{thm}\label{thm:rigid kernel bounded}
  Let $\Gamma$ be a level-transitive, self-similar, regular branch
  group.  Then the kernel of $\gt\to\gb$ has finite exponent.
\end{thm}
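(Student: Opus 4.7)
The third identity in~\eqref{eq:varprojlim} presents the rigid kernel as $\varprojlim_n \stab_\Gamma(n)/\rist_\Gamma(n)$, so it will suffice to produce a single integer $e\geq1$ such that the exponent of $\stab_\Gamma(n)/\rist_\Gamma(n)$ divides $e$ for every $n$; the inverse limit will then also have exponent dividing $e$.

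First I would fix $K=G_\#$, the maximal branching subgroup furnished earlier. Since $\Gamma$ is regular branch, some branching subgroup has finite index in $\Gamma$; by the maximality of $G_\#$ that subgroup lies in $K$, whence $[\Gamma:K]<\infty$. Let $N$ be the normal core of $K$ in $\Gamma$ and set $e:=[\Gamma:N]$, a finite integer dividing $[\Gamma:K]!$. As $N\trianglelefteq\Gamma$ has index $e$, every $g\in\Gamma$ satisfies $g^e\in N\leq K=\bigcap_{v\in X^*}G\prist v$; in particular
\begin{align*}
g^e\in G\prist v\qquad\text{for every }g\in\Gamma\text{ and every }v\in X^*.
\end{align*}

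Next I would combine this with self-similarity. Take $g\in\stab_\Gamma(n)$. Self-similarity gives $g@v\in\Gamma$ for each $v\in X^n$, and because $g$ fixes $v$ the state map is multiplicative there, so $(g^e)@v=(g@v)^e$. Applying the displayed inclusion to $g@v\in\Gamma$ yields $(g@v)^e\in G\prist v$, hence $v*(g@v)^e\in\rist_\Gamma(v)$. Decomposing $g^e$ via the wreath-product structure of $\stab(n)=\prod_{v\in X^n}\rist_{\aut T}(v)$,
\begin{align*}
g^e=\prod_{v\in X^n}v*(g@v)^e\in\prod_{v\in X^n}\rist_\Gamma(v)=\rist_\Gamma(n).
\end{align*}
Thus $\stab_\Gamma(n)/\rist_\Gamma(n)$ has exponent dividing $e$ uniformly in $n$, and the inverse limit $\ker(\gt\to\gb)$ has exponent dividing $e$.

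The only real subtlety is that the theorem does not assume recurrence, so $G_\#$ need not be normal in $\Gamma$; without normality Lagrange does not directly give $g^e\in G_\#$. Passing to the normal core of $K$ resolves this at the mild cost of replacing the naive exponent $[\Gamma:K]$ by (at most) $[\Gamma:K]!$. Everything else is routine bookkeeping with the state/wreath-product decomposition and the inclusion $G_\#\leq G\prist v$ that defines $G_\#$.
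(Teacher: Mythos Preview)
Your proof is correct and shares its core idea with the paper's: both bound the exponent by that of $\Gamma/\core_\Gamma(G_\#)$. The paper proceeds by first invoking Corollary~\ref{cor:rist cofinal to K} to replace $\{\rist_\Gamma(n)\}$ by the cofinal system $\{\core_\Gamma(X^n*K)\}$ in the inverse limit, and then observes that $\stab_\Gamma(n)/\core_\Gamma(X^n*K)$ is a subquotient of $(\Gamma/\core_\Gamma(K))^{X^n}$ via the wreath decomposition. Your route is more direct: using the defining equality $G_\#=\bigcap_{v} G\prist v$, you show immediately that each state $(g@v)^e$ lies in $G\prist v$, hence $g^e\in\rist_\Gamma(n)$, without ever needing the cofinality of $\{X^n*K\}$ with $\{\rist_\Gamma(n)\}$ (and so without Lemma~\ref{lem:exists n}). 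The paper's detour through $X^n*K$ has the mild advantage of exhibiting the explicit subquotient $(\Gamma/\core_\Gamma(K))^{X^n}$, but for the sole purpose of bounding the exponent your argument is cleaner.
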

\begin{proof}
  Let $K$ be the maximal branching subgroup of $\Gamma$.  By
  Corollary~\ref{cor:rist cofinal to K} the collections
  $\{X^n*K:n\geq0\}$ and $\{\rist_\Gamma(n):n\geq0\}$ are cofinal.
  Therefore the rigid kernel can be computed as
  $\varprojlim\stab_\Gamma(n)/\core_\Gamma(X^n*K)$, where for a subgroup $H\le \Gamma$ we define $\core_\Gamma(H)=\bigcap_{g\in \Gamma}H^g$.
    
  Note that $\core_\Gamma(X^n*K)$ contains $X^n*\core_\Gamma(K)$ because the latter is contained in $X^n*K$ and normal in $\Gamma$.
    The image of $\stab_\Gamma(n)$
  through the wreath decomposition
  $\Gamma\to\Gamma^{X^n}\rtimes(\Gamma/\stab_\Gamma(n))$ is contained
  in $\Gamma^{X^n}$. Therefore $\stab_\Gamma(n)/\core_\Gamma(X^n*K)$ is a subquotient of $(\Gamma/\core_\Gamma(K))^{X^n}$ for all $n\geq0$.  This
  proves that the rigid kernel of $\Gamma$ has exponent bounded by $e$
  where $e$ is the exponent of $\Gamma/\core_\Gamma(K)$, which is finite by Lemma~\ref{lem:(regular) branch} and Corollary~\ref{cor:rist cofinal to K}.
\end{proof}

\subsubsection*{Independence of the representation}
\begin{quotethm}[\ref{thm:regular branch -> independence}]
\regbranchindep
\end{quotethm}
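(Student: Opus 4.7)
The plan is to mimic the argument of Theorem~\ref{thm:independence branch kernel}, but replacing the ``Abelian'' feature of the branch kernel by the ``finite exponent'' feature of the rigid kernel supplied by Theorem~\ref{thm:structure of congruence kernel}. By symmetry it suffices to prove $\K_1\subseteq\K_2$, and I would do this by studying the image $H$ of $\K_1$ under the natural epimorphism $\gh\to\gb_2$ and showing it is trivial.

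First, I would assemble the structural information. Writing $\K_i^{br}=\ker(\gh\to\gt_i)$ for the branch kernel of the $i$-th representation, Theorem~\ref{thm:structure of congruence kernel} gives an integer $e$ such that the rigid kernel $\K_1/\K_1^{br}\simeq\ker(\gt_1\to\gb_1)$ has exponent dividing $e$. Hence $\K_1^e\subseteq\K_1^{br}$. On the other hand, Theorem~\ref{thm:independence branch kernel} asserts that $\K_1^{br}\subseteq\K_2$. Combining these two inclusions yields $\K_1^e\subseteq\K_2$, so the image $H$ of $\K_1$ in $\gb_2=\gh/\K_2$ is a subgroup of exponent dividing $e$, and in particular $H$ satisfies the group law $x^e=1$.

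Next, $H$ is normal in $\gb_2$ because $\K_1$ is normal in $\gh$ and the map $\gh\to\gb_2$ is surjective. I would then invoke Corollary~\ref{cor:no law in branch groups}: it applies to any level-transitive branch group acting on a rooted tree, and $\gb_2$ qualifies (it is the closure of $\Gamma$ inside $\aut T_2$; level-transitivity is preserved by closures, and the closure of $\rist_\Gamma(n)$ in $\gb_2$ is contained in $\rist_{\gb_2}(n)$ and has finite index, so $\gb_2$ is branch). Since $H$ is a normal subgroup of $\gb_2$ that satisfies a law, Corollary~\ref{cor:no law in branch groups} forces $H=1$, i.e.\ $\K_1\subseteq\K_2$. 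The whole argument is manifestly symmetric in the indices $1$ and $2$, so we also get $\K_2\subseteq\K_1$, yielding $\K_1=\K_2$.

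The only potentially delicate point is justifying that Corollary~\ref{cor:no law in branch groups} may indeed be applied to the profinite group $\gb_2$ rather than to the discrete $\Gamma$ itself, but this is precisely the usage already made in the proof of Theorem~\ref{thm:independence branch kernel}, so no extra work is required. Everything else is bookkeeping: noting that $\K_1/\K_1^{br}$ really computes the rigid kernel (from the short exact sequence associated with $\gh\twoheadrightarrow\gt_1\twoheadrightarrow\gb_1$) and that closed-subgroup operations commute appropriately with the exponentiation $\K_1^e$ in the profinite setting.
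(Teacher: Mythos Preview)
Your proof is correct and close in spirit to the paper's, but the two differ in how the relevant group law is produced. The paper argues directly: by Theorem~\ref{thm:structure of congruence kernel} the congruence kernel $\K_1$ is Abelian-by-(finite exponent $e$), hence satisfies the law $[x^e,y^e]=1$, and Corollary~\ref{cor:no law in branch groups} then forces the image of $\K_1$ in $\gb_2$ to be trivial. You instead route the Abelian half through Theorem~\ref{thm:independence branch kernel}: from $\K_1^e\subseteq\K_1^{br}\subseteq\K_2$ you deduce that the image of $\K_1$ in $\gb_2$ already satisfies the simpler law $x^e=1$, and finish the same way. Your path is logically a slight detour (Theorem~\ref{thm:independence branch kernel} is itself proved via Corollary~\ref{cor:no law in branch groups}, so you are invoking that corollary twice), but it has the virtue of exhibiting Theorem~\ref{thm:regular branch -> independence} as a transparent upgrade of Theorem~\ref{thm:independence branch kernel} and of yielding the sharper identity $x^e=1$ on the image rather than merely $[x^e,y^e]=1$ on $\K_1$.
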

\begin{proof}
Combining Theorems~\ref{thm:branch kernel Abelian} and~\ref{thm:rigid kernel bounded} we see that $\K_1$ and $\K_2$ are both extensions of an Abelian group by a group of finite exponent.
Thus $\K_1$ and $\K_2$ satisfy a group law (big powers commute).
Corollary~\ref{cor:no law in branch groups} shows that the image of $\K_1$ in $\gb_2$ is trivial, thus $\K_1\leq\K_2$ and vice versa.
\end{proof}

\section{Examples}
We describe in Sections~\ref{sec:pervova} and~\ref{sec:twin} some
groups with trivial rigid kernel, but with non-trivial branch kernel.
In contrast, we describe in Section~\ref{sec:Hanoi} the Hanoi tower
group, which has both branch and rigid kernel non-trivial.

\subsection{Pervova's examples}\label{sec:pervova}
Pervova~\cite{pervova:profinitecompletions} has constructed the first examples of groups acting on rooted trees which fail to have the congruence subgroup property, i.e. the kernel of $\gh\to\gb$ is non-trivial.
Her examples act on a $p$-ary rooted tree where $p$ is an odd prime.
We detail the congruence kernels only for $p=3$ for simplicity but the other examples are similar.

Define the rooted tree automorphisms $a,b,c$ via the recursions
\begin{align*}
a&=\sigma_{123},&b&=(a,a^{-1},b),&c&=(c,a,a^{-1}).
\end{align*}
We let $\Gamma$ be the group generated by $a,b,c$. Considering
furthermore the element $d=(d,d,d)\sigma_{123}$, we see that $\langle
a,c\rangle=\langle a,b\rangle^d$ is the infinite torsion group
considered by Gupta and Sidki~\cite{gupta-s:infinitep}. In fact, the
same proof shows that $\Gamma$ is a torsion group.
Clearly the group $\Gamma$ is self-similar, level-transitive and recurrent.
\begin{lem}
  $\Gamma$ is regular branch, with branching subgroup $\Gamma'$.
\end{lem}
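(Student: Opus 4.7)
The plan is to verify the two defining conditions for $\Gamma$ to be regular branch with branching subgroup $\Gamma'$: (a) $v*\Gamma' \leq \Gamma'$ for every $v \in X^*$, and (b) $[\Gamma : X^n * \Gamma'] < \infty$ for every $n \geq 0$.

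I would start by showing that $\Gamma/\Gamma'$ is finite. It is generated by the images of $a$, $b$, $c$: clearly $a^3 = 1$, and the recursion $b^3 = (1,1,b^3)$ combined with the torsion of $\Gamma$ (by the Gupta--Sidki argument invoked above) forces $b^3 = 1$, and similarly $c^3 = 1$. Hence $\Gamma/\Gamma'$ is a quotient of $(\Z/3\Z)^3$, in particular finite. Granting (a) below, $X^n*\Gamma'$ sits in $\stab_\Gamma(n)$ and corresponds under the wreath embedding $\stab_\Gamma(n) \hookrightarrow \Gamma^{X^n}$ to $(\Gamma')^{X^n}$, so $[\Gamma : X^n*\Gamma'] \leq [\Gamma:\stab_\Gamma(n)] \cdot [\Gamma:\Gamma']^{3^n} < \infty$.

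For (a), self-similarity, recurrence, level-transitivity and normality of $\Gamma'$ reduce the problem to showing $\Gamma' \times 1 \times 1 \leq \Gamma'$: self-similarity then propagates the conclusion deeper into the tree, while conjugation by powers of $a$ distributes it across the level-$1$ vertices. Set $K := \{g \in \Gamma : (g,1,1) \in \Gamma'\}$; the aim is $K = \Gamma'$. The inclusion $K \leq \Gamma'$ is immediate by projecting to the first state. Normality $K \trianglelefteq \Gamma$ comes from recurrence: any $h \in \Gamma$ lifts to $\tilde h \in \stab_\Gamma(1)$ with $\tilde h\at{1} = h$, and conjugation by $\tilde h$ sends $(g,1,1) \in \Gamma'$ to $(h^{-1}gh, 1, 1) \in \Gamma'$.

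The heart of the argument is the wreath-product identity
\[
[b^{a^{-1}}, c] = ([b,c], 1, 1),
\]
obtained by direct expansion: $b^{a^{-1}} = (b, a, a^{-1})$ and $c = (c, a, a^{-1})$ share identical second and third states, so those entries of the commutator collapse to~$1$, leaving $[b,c]$ in the first. Thus $[b,c] \in K$. Further combinations of commutators involving $a^{\pm 1}$-conjugates of $b$ and $c$---for example, $[b, b^{a^{-1}}] = ([a,b], 1, [b, a^{-1}])$ multiplied with $[b,c]^{-1} = ([c,a], 1, [a^{-1}, b])$ yields $([a,b][c,a], 1, 1) \in \Gamma'$, and symmetric computations with the roles of $b$ and $c$ swapped give analogous relations---pile up enough elements in $K$ that, together with normality, they cover the standard normal generators $[a,b]$, $[a,c]$, $[b,c]$ of $\Gamma'$, yielding $\Gamma' \leq K$. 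The main obstacle is this last step: one must choose conjugating exponents so that the wreath entries of each selected commutator cancel on two coordinates, leaving the desired generator on the third, and then combine enough such commutators (via the subgroup and normality properties of $K$) to exhaust the normal generators of $\Gamma'$.
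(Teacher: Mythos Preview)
Your strategy---reduce to $1*\Gamma'\le\Gamma'$, set $K=\{g:(g,1,1)\in\Gamma'\}$, observe $K\trianglelefteq\Gamma$ by recurrence, and then place the normal generators $[a,b],[a,c],[b,c]$ of $\Gamma'$ inside $K$ via explicit wreath computations---is exactly the paper's approach. The paper simply writes down three identities,
\[
1*[a,b]=[b^{-a}b^{-a^2},\,b^{-1}b^{-a}],\qquad 1*[a,c]=[c^{-1}c^{-a},\,c^{-a^2}c^{-1}],\qquad 1*[b,c]=[b^{a},c],
\]
and is done. Your version, however, has two genuine problems.

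First, a sign slip: with the conventions in force one gets $(b^{a})\at{x}=b\at{(x^{a^{-1}})}$, so $b^{a}=(b,a,a^{-1})$, not $b^{a^{-1}}$. Your identity $[b^{a^{-1}},c]=([b,c],1,1)$ is therefore the paper's $[b^{a},c]=([b,c],1,1)$ with the exponent corrected.

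Second, and more seriously, the step that is supposed to put $[a,b]$ and $[a,c]$ into $K$ is not actually carried out. You obtain $[a,b][c,a]\in K$ from one computation and then appeal to ``symmetric computations with the roles of $b$ and $c$ swapped''. But the recursions $b=(a,a^{-1},b)$ and $c=(c,a,a^{-1})$ are \emph{not} symmetric under $b\leftrightarrow c$, so there is no automorphism to invoke; the analogous commutators with $c$ in place of $b$ land their nontrivial entries in different coordinates and do not cancel the same way. Without a second independent relation you cannot separate $[a,b]$ and $[c,a]$, and the argument stalls. The fix is precisely what the paper does: produce explicit elements of $\Gamma'$ equal to $1*[a,b]$ and $1*[a,c]$ individually.

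A minor side remark: the inclusion $K\le\Gamma'$ is neither needed (only $\Gamma'\le K$ matters) nor justified by ``projecting to the first state'', since the state map $g\mapsto g\at{1}$ does not send $\Gamma'$ into $\Gamma'$ in general.
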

\begin{proof}
  It suffices to check the computations
  \begin{align*}
    1*[a,b] &= [b^{-a}b^{-a^2},b^{-1}b^{-a}],\\
    1*[a,c] &= [c^{-1}c^{-a},c^{-a^2}c^{-1}],\\
    1*[b,c] &= [b^a,c].\qedhere
  \end{align*}
\end{proof}

We have~\cite{pervova:profinitecompletions}*{Lemma 1.4}:
\begin{align*}
\Gamma/\Gamma'\simeq C_3\times C_3\times C_3,
\end{align*}
but on the other hand it is easy to see that $\gb/\gb'\simeq C_3\times
C_3$; indeed $c^{-1}b=[a,b](1*[a,b])(1^2*[a,b])\cdots$ belongs to $\gb'$.
This proves that $\Gamma'$ is not a congruence subgroup.

\begin{prop}[\cite{pervova:profinitecompletions}*{Proposition 1.3}]
We have the inclusion $\Gamma'\leq\Gamma\prist{v}$ for all vertices $v$.
\end{prop}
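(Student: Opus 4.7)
The proposition is essentially a restatement of the preceding lemma once the definitions are unfolded, so my plan is to traverse the definition chase carefully and then point to the lemma. First I would unfold: by the very definition of $\Gamma\prist{v}$, the inclusion $\Gamma'\leq\Gamma\prist{v}$ is equivalent to $v*\Gamma'\subseteq\Gamma$. Hence the task reduces to showing that $v*g\in\Gamma$ for every $g\in\Gamma'$ and every vertex $v\in X^*$.

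Next I would appeal to the preceding lemma, which asserts that $\Gamma$ is regular branch with branching subgroup $\Gamma'$. By definition of a branching subgroup this means $v*\Gamma'\leq\Gamma'$ for all $v\in X^*$, and since $\Gamma'\leq\Gamma$ this already yields $v*\Gamma'\leq\Gamma$, as required. Equivalently (and perhaps more cleanly), the earlier structural lemma identifying $\Gamma_\#=\bigcap_{v\in X^*}\Gamma\prist{v}$ as the maximal branching subgroup gives $\Gamma'\leq\Gamma_\#$ directly, whence $\Gamma'\leq\Gamma\prist{v}$ for every vertex $v$.

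The only point that merits attention is convincing oneself that the three computations in the proof of the preceding lemma truly suffice to establish the branching property of $\Gamma'$ as a whole, not just of its generating commutators. The argument I would give is standard: the map $g\mapsto 1*g$ is a group homomorphism $\Gamma\to\aut X^*$, so it sends the subgroup generated by $[a,b]$, $[a,c]$, $[b,c]$ into $\Gamma'$. To pass to the normal closure in $\Gamma$ I would use that $\Gamma$ is recurrent: given $g\in\Gamma$, recurrence supplies $\tilde g\in\stab_\Gamma(1)$ with $\tilde g\at{1}=g$, and then for any commutator $h$ of generators one computes $1*(ghg^{-1})=\tilde g\,(1*h)\,\tilde g^{-1}$, which belongs to $\Gamma'$ by normality of $\Gamma'$ in $\Gamma$. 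This yields $1*\Gamma'\leq\Gamma'$; level-transitivity via the cyclic element $a$ then upgrades this to $x*\Gamma'\leq\Gamma'$ for every $x\in X$, and self-similarity propagates it by induction on $|v|$ through $vx*\Gamma'=v*(x*\Gamma')\leq v*\Gamma'\leq\Gamma'$. I do not foresee any real obstacle; the content of the proposition is entirely absorbed by the preceding lemma and the characterization of $\Gamma_\#$.
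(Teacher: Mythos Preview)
Your argument is correct. Note, however, that the paper does not supply its own proof of this proposition: it is simply quoted from Pervova's paper, so there is no in-paper proof to compare against. Within the framework set up here your reduction is the natural one: once the preceding lemma establishes that $\Gamma'$ is a branching subgroup, the characterisation of $\Gamma_\#$ as the maximal branching subgroup immediately yields $\Gamma'\le\Gamma_\#\le\Gamma\prist{v}$ for every $v$. Your supplementary paragraph, explaining via recurrence and normality why the three commutator identities in the lemma suffice to give $x*\Gamma'\le\Gamma'$ for all $x\in X$ and then inducting on $|v|$, is the standard argument and fills in exactly what the terse proof of the lemma leaves implicit.
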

From this we show slightly more.
Let $K$ be the group generated by $\Gamma'$ and $b^{-1}c$.
\begin{prop}
We have the inclusion $K\leq\Gamma\prist{v}$ for all vertices $v$.
\end{prop}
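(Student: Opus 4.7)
The plan is to reduce to an explicit level-one calculation and bootstrap by induction. Since $\Gamma' \leq \Gamma\prist{v}$ for all $v$ by the previous proposition, and since $K = \langle \Gamma', b^{-1}c\rangle$, it suffices to show $v*(b^{-1}c) \in \Gamma$ for every vertex $v$.

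The key computation I would perform is the identity $[a,b]\cdot b^{-1}c = 1*(b^{-1}c)$ in the wreath product. Expanding $[a,b] = (b^{-1}a, a, ab)$ and $b^{-1}c = (a^{-1}c, a^{-1}, b^{-1}a^{-1})$, both with trivial root permutation, the coordinatewise product gives $b^{-1}a\cdot a^{-1}c = b^{-1}c$ at position $1$ and $1$ at positions $2$ and $3$ (from the cancellations $a\cdot a^{-1}=1$ and $ab\cdot b^{-1}a^{-1}=1$). This displays $1*(b^{-1}c) = a^{-1}b^{-1}ac$ as an honest element of $\Gamma$, so $b^{-1}c\in\Gamma\prist{1}$. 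Conjugating this element by $a$ cyclically permutes the level-one projections and yields $2*(b^{-1}c)\in\Gamma$, and conjugating by $a^{2}$ yields $3*(b^{-1}c)\in\Gamma$. Hence $K\leq\Gamma\prist{w}$ for every level-one vertex $w$.

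For deeper vertices I would prove, by simultaneous induction on $|v|$, the two statements (a) $K\leq\Gamma\prist{v}$ and (b) $v*K\subseteq K$. The case $|v|=1$ of (a) is the above; for (b) at $|v|=1$, $w*\Gamma'\leq\Gamma'$ because $\Gamma'$ is the branching subgroup, while the identity and its $a$-conjugates show $w*(b^{-1}c)\in\Gamma'\cdot(b^{-1}c)\subseteq K$. For the inductive step at $|v|=n+1$, decompose $v=wu$ with $|w|=1$: for $g\in K$ we have $u*g\in K$ by (b) at $u$, and then $v*g = w*(u*g)\in\Gamma$ lies in $K$ by (a) and (b) at $w$.

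The main obstacle is discovering the identity $[a,b]\cdot b^{-1}c = 1*(b^{-1}c)$. Once this is in hand — the cancellations at positions $2$ and $3$ are forced by the matching of projections of $[a,b]$ and $b^{-1}c$ — the rest of the proof is routine bookkeeping using the branch structure of $\Gamma$ and level-transitivity.
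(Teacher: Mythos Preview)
Your proof is correct and follows essentially the same route as the paper: the key identity $[a,b]\,b^{-1}c = 1*(b^{-1}c)$ is exactly the paper's computation, and your inductive verification that $K$ is itself a branching subgroup (your statement~(b)) is precisely what the paper leaves implicit in the sentence ``It is enough to show that $K$ contains $1*(b^{-1}c)$.'' The only difference is that you spell out the induction and the passage to the other first-level vertices via conjugation by $a$, whereas the paper takes these for granted.
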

\begin{proof}
It is enough to show that $K$ contains $1*(b^{-1}c)$.
A direct computation shows $[a,b]b^{-1}c=b^{-a}c=1*(b^{-1}c)\in K$.
\end{proof}

\begin{prop}\label{prop:PSK}
The groups $\gt$ and $\gb$ coincide.
\end{prop}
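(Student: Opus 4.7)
My plan is to show the rigid kernel $\ker(\gt\to\gb)=\varprojlim_n\stab_\Gamma(n)/\rist_\Gamma(n)$ is trivial by producing an integer $k$ such that $\stab_\Gamma(n+k)\le\rist_\Gamma(n)$ for every $n\ge0$; the connecting maps in the inverse system then vanish for $m\ge n+k$, forcing the limit to be trivial.

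The critical preliminary is that $K$ itself is a congruence subgroup of $\Gamma$. The identity $c^{-1}b=[a,b](1*[a,b])(1^2*[a,b])\cdots$ recorded just above the proposition holds in $\gb$ and places $b^{-1}c$ in $\overline{\Gamma'}=\gb'$. Since $K=\langle\Gamma',b^{-1}c\rangle$, one obtains $\overline K=\gb'$. Now $[\Gamma:K]=9$, coming from $\Gamma/\Gamma'\simeq C_3^3$ together with $K/\Gamma'=\langle b^{-1}c\rangle\simeq C_3$, while $[\gb:\gb']=9$ by the computation already recorded. The induced surjection $\Gamma/K\twoheadrightarrow\gb/\overline K$ between two groups of order $9$ is therefore an isomorphism, so $K=\Gamma\cap\gb'$ is open in the congruence topology and contains some level stabilizer $\stab_\Gamma(k)$.

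With such a $k$ in hand, fix $n$ and take any $g\in\stab_\Gamma(n+k)$. Self-similarity ensures each state $g\at v$ with $v\in X^n$ lies in $\Gamma$ and fixes level $k$, so $g\at v\in\stab_\Gamma(k)\le K$. The preceding proposition $K\le\Gamma\prist v$ then yields $v*(g\at v)\in\rist_\Gamma(v)$ for every $v\in X^n$. Since $g$ also fixes level $n$, its wreath decomposition reads $g=\prod_{v\in X^n}v*(g\at v)$, placing $g\in\rist_\Gamma(n)$ and establishing the desired inclusion $\stab_\Gamma(n+k)\le\rist_\Gamma(n)$.

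The only non-routine step is the congruence-subgroup property for $K$, which is the point where the identity $c^{-1}b\in\gb'$ supplied by Pervova's argument is indispensable; everything else is bookkeeping with self-similarity, the inclusion $K\le\Gamma\prist v$, and the wreath decomposition of level stabilizers.
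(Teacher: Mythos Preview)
Your argument is correct and follows the same skeleton as the paper's: show that $K$ is a congruence subgroup, then combine $K\le\Gamma\prist v$ with self-similarity to obtain $\stab_\Gamma(n+k)\le\rist_\Gamma(n)$, whence the two topologies agree. The one difference lies in how you verify that $K$ contains a level stabilizer. The paper does this by a direct finite computation: $\pi_2(\Gamma)/\pi_2(K)$ has order $9=[\Gamma:K]$, so $\stab_\Gamma(2)\le K$ with the explicit constant $k=2$. You instead recycle the identity $b^{-1}c\in\gb'$ and the count $[\gb:\gb']=9$ already recorded in the text to identify $\overline K=\gb'$, whence $K=\Gamma\cap\gb'$ is open for the congruence topology. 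Your route avoids an extra computation at level~$2$ but does not name a specific $k$; the paper's route is more self-contained and pins down $k=2$. The wreath-decomposition half of your proof spells out explicitly what the paper leaves as the one-line assertion that the topologies coincide.
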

\begin{proof}
We show that $K$ contains $\stab_\Gamma(2)$. This proves that the
topologies induced by $\{\stab_\Gamma(n)\mid n\in\N\}$ and
$\{\rist_\Gamma(n)\mid n\in\N\}$ are the same.

Clearly $K$ has index $9$ in $\Gamma$, and a direct computation shows that $\pi_2(\Gamma)/\pi_2(K)$ is a group of order $9$, where $\pi_2$ is the canonical map $\Gamma\to\Gamma/\stab_\Gamma(2)$.
Thus $K$ contains $\stab_\Gamma(2)$.
\end{proof}

\begin{thm}[\cite{pervova:profinitecompletions}*{Theorem 3.2}]
Any finite index subgroup $N$ of $\Gamma$ contains $X^n*\Gamma'$ for some $n$.
\end{thm}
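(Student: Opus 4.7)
The plan is to reduce the theorem to a structural claim about the finite Abelian quotient $\Gamma'/(\Gamma')'$ and then invoke the specific recursive computations in Pervova's group.

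First, replace $N$ by its normal core; it is still of finite index, so I may assume $N\trianglelefteq\Gamma$. Combining Corollary~\ref{cor:N contains rist'} with Lemma~\ref{lem:N contains rist^e}, there exist integers $m\geq0$ and $e\geq1$ (the latter the exponent of $\Gamma/N$) with $\rist_\Gamma(m)'\rist_\Gamma(m)^e\leq N$. Since $\Gamma$ is regular branch over $\Gamma'$ we have $X^m*\Gamma'\leq\rist_\Gamma(m)$; moreover, because distinct factors $v*\Gamma'$ for $v$ at level $m$ commute, $(X^m*\Gamma')^e=X^m*(\Gamma')^e$ and $(X^m*\Gamma')'=X^m*(\Gamma')'$. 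Hence
\begin{align*}
X^m*\bigl((\Gamma')^e(\Gamma')'\bigr)\;\leq\;\rist_\Gamma(m)^e\rist_\Gamma(m)'\;\leq\;N.
\end{align*}
So the theorem reduces to proving that for every $e\geq1$ there exists $k\geq0$ with $X^k*\Gamma'\leq(\Gamma')^e(\Gamma')'$, since then $X^{m+k}*\Gamma'=X^m*(X^k*\Gamma')\leq N$.

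Next I would use that $\Gamma$ is a torsion $3$-group (the Gupta--Sidki argument transfers to $\Gamma$, as noted after its definition). Then $\Gamma'$, being of finite index in the finitely generated group $\Gamma$, is itself finitely generated, and being torsion, its Abelianization $B:=\Gamma'/(\Gamma')'$ is a finite Abelian $3$-group. Let $B_k$ denote the image of $X^k*\Gamma'$ in $B$. The decreasing chain $B\geq B_1\geq B_2\geq\cdots$ stabilizes at some $B_\infty$, and it suffices to show $B_\infty=0$: for then $X^k*\Gamma'\leq(\Gamma')'\leq(\Gamma')^e(\Gamma')'$ for all sufficiently large $k$ and every $e\geq1$.

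The main obstacle is the vanishing $B_\infty=0$, which is where the specific structure of Pervova's group is essential. Using the recursions $b=(a,a^{-1},b)$ and $c=(c,a,a^{-1})$, together with the commutator identities $1*[a,b]=[b^{-a}b^{-a^2},b^{-1}b^{-a}]$, $1*[a,c]=[c^{-1}c^{-a},c^{-a^2}c^{-1}]$, $1*[b,c]=[b^a,c]$ already recorded in the proof that $\Gamma$ is regular branch, one computes the three maps $B\to B$ induced by $g\mapsto v*g$ for $v\in X$. Combined with the $\Gamma/\Gamma'$-action permuting the images of $1*\Gamma'$, $2*\Gamma'$ and $3*\Gamma'$ in $B$, a direct analysis shows that the joint iterated image eventually vanishes in the finite group $B$, giving $B_\infty=0$. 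This last computation is essentially Pervova's original argument in~\cite{pervova:profinitecompletions}, and the reduction above shows how it yields the stated theorem.
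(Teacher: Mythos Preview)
The paper does not give its own proof of this statement; it is simply quoted as Theorem~3.2 of~\cite{pervova:profinitecompletions}. Your proposal therefore already goes further than the paper by supplying a reduction.

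That reduction is correct. Passing to the normal core, invoking Corollary~\ref{cor:N contains rist'} and Lemma~\ref{lem:N contains rist^e}, and using that $(X^m*\Gamma')'=X^m*(\Gamma')'$ and $(X^m*\Gamma')^e=X^m*(\Gamma')^e$ (the factors commute) are all sound. The paper only asserts that $\Gamma$ is torsion, but the Gupta--Sidki contraction argument does show it is a $3$-group, so your claim that $B=\Gamma'/(\Gamma')'$ is a finite Abelian $3$-group is legitimate.

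What remains is the vanishing $B_\infty=0$, which you rightly flag as the main obstacle and defer to Pervova. Note, however, that this step is not a mere technicality: applying the theorem itself to the finite-index normal subgroup $N=(\Gamma')'$ shows that $B_\infty=0$ is \emph{equivalent} to the full statement, so your reduction has reformulated rather than genuinely reduced the problem. Your outline for settling it (compute the three maps $B\to B$ induced by $g\mapsto x*g$ from the listed commutator identities, then iterate inside the finite group $B$) is plausible but not carried out. In the end, both your proposal and the paper rest on~\cite{pervova:profinitecompletions} for the decisive computation.
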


In~\cite{pervova:profinitecompletions}*{Theorem 3.4}, the congruence kernel of $\Gamma$ is described as an inverse limit of elementary Abelian $p$-groups.
Using our Corollary~\ref{cor:transitive regular branch}, the structure of the kernel follows from an easy computation, and we get the $\gb$-module action almost for free.
\begin{cor}
  The branch and congruence kernels of $\Gamma$ are
  $(\Z/3\Z)[[X^\omega]]$.
\end{cor}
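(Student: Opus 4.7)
The plan combines Proposition~\ref{prop:PSK} with Corollary~\ref{cor:transitive regular branch}, after which one identifies the resulting coefficient group. First, Proposition~\ref{prop:PSK} asserts $\gt=\gb$, so the rigid kernel vanishes and the branch and congruence kernels coincide; it therefore suffices to compute the branch kernel. Second, take as branching subgroup $K=\Gamma'$ (justified by the preceding lemma); Pervova's Theorem~3.2 provides the requisite cofinality between $\{X^n*\Gamma'\}$ and $\{\rist_\Gamma(n)\}$, since every finite-index subgroup of $\Gamma$ contains some $X^m*\Gamma'$. Applying Corollary~\ref{cor:transitive regular branch} then yields
\[ \ker(\gh\to\gt)\simeq\widehat\Z[[\gt]]\widehat\otimes_{\widehat\Z[[\gt_w]]}A,\qquad A=\varprojlim_e\Gamma'/(\Gamma''(\Gamma')^e), \]
with transition maps induced by the endomorphism $\sigma\colon g\mapsto(g,1,1)$ of $\Gamma'$.

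The heart of the proof is to identify $A$ with $\Z/3\Z$. The starting observation is the abelianization comparison already displayed in the text: $\Gamma^{ab}\simeq C_3^3$, whereas $\gb^{ab}\simeq C_3^2$, because $c^{-1}b=[a,b]\cdot(1*[a,b])\cdot(1^2*[a,b])\cdots$ lies in $\gb'$. The truncated partial products of this infinite expression lie in $\sigma^n(\Gamma'/\Gamma'')$ for every~$n$ and are all non-trivial modulo~$\Gamma''$, exhibiting a $\Z/3\Z$ that survives the inverse limit. Combining this with the $3$-group structure of $\Gamma$ (Pervova's extension of the Gupta--Sidki torsion argument, which lets one reduce to the $3$-primary part and invoke the simplification $A=\bigcap_n\sigma^n(\Gamma'/\Gamma'')$ from the remark after Corollary~\ref{cor:transitive regular branch}) and a direct computation with the wreath recursions for $a,b,c$, one checks that no class other than $[c^{-1}b]$ persists under iteration of $\sigma$, so $A\simeq\Z/3\Z$. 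The generator being $\Gamma$-invariant by symmetry between the three summands at level~$1$, the induced $\gt_w$-action on $A$ is trivial.

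Finally, with $A=\Z/3\Z$ carrying the trivial action, the remark after Theorem~\ref{thm:description branch kernel} identifies
\[ \widehat\Z[[\gt]]\widehat\otimes_{\widehat\Z[[\gt_w]]}(\Z/3\Z)\simeq(\Z/3\Z)[[\partial T]]=(\Z/3\Z)[[X^\omega]], \]
as claimed. The chief obstacle is the inverse-limit computation $A=\Z/3\Z$: one must rule out the persistence of any class in $\Gamma'/\Gamma''$ beyond the one detected at the abelianized level, which requires careful bookkeeping with the wreath recursions rather than a purely formal argument.
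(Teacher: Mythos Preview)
Your overall scaffolding is right---reduce to the branch kernel via Proposition~\ref{prop:PSK}, then invoke Corollary~\ref{cor:transitive regular branch}---but the heart of the argument, the identification $A\simeq\Z/3\Z$, does not go through as sketched. With the branching subgroup taken to be $\Gamma'$, you must analyse $\Gamma'/\Gamma''$ together with the endomorphism~$\sigma$, and your outline does not do this correctly. Concretely: (i) the element $c^{-1}b$ does not lie in $\Gamma'$, so it represents no class in $\Gamma'/\Gamma''$; (ii) the truncated partial products $p_n=\prod_{i<n}1^i*[a,b]$ do \emph{not} lie in $\sigma^n(\Gamma'/\Gamma'')$, since $\sigma^n(\Gamma')\subseteq 1^n*\Gamma'$ whereas $p_n$ carries the top-level factor $[a,b]$; and (iii) the simplification $A=\bigcap_n\sigma^n(\Gamma'/\Gamma'')$ from Remark~\ref{rmk:endomorphism} presupposes that $\Gamma'/\Gamma''$ is finite, which you have not established. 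So the crucial inverse-limit step is neither carried out nor correctly sketched.

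The paper bypasses all of this by working with the \emph{larger} branching subgroup $K=\langle\Gamma',\,b^{-1}c\rangle$ introduced just before Proposition~\ref{prop:PSK}. Using Pervova's Theorem to cofinalize the finite-index normal subgroups with $\{X^m*\Gamma'\}$, one is reduced to the inverse system $K/\Gamma'$ with connecting map induced by $g\mapsto(g,1,1)$. Since $K/\Gamma'\simeq\Z/3\Z$ is generated by $b^{-1}c$, the single identity $1*(b^{-1}c)=[a,b]\,b^{-1}c\equiv b^{-1}c\pmod{\Gamma'}$ shows that the connecting map is the identity, whence $A\simeq\Z/3\Z$ at once; triviality of the $\gt_w$-action follows because $\stab_\Gamma(v)$ already acts trivially on $v*(K/\Gamma')$. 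The moral: when several branching subgroups are available, pass to the largest one so that the quotient entering the inverse system is as small as possible---here a single copy of $\Z/3\Z$ rather than all of $\Gamma'/\Gamma''$.
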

\begin{proof}
  Both kernels are equal by Proposition~\ref{prop:PSK}, so we
  concentrate on the former.  We use Corollary~\ref{cor:transitive
    regular branch} to deduce that the congruence kernel is
  $\widehat\Z[[\gt]]\widehat\otimes_{\widehat\Z[[\gt_w]]}A$ where $A$
  is the inverse limit of $K/\Gamma'$ with the connecting map
\begin{align*}
K/\Gamma'\to K/\Gamma'\times\{1\}\times\{1\}\to K/(\Gamma')^3\to K/\Gamma'.
\end{align*}
$K/\Gamma'$ is a cyclic group of order $3$ generated by $b^{-1}c$, and a straightforward computation shows $1*(b^{-1}c)=[a,b]b^{-1}c\equiv b^{-1}c\pmod{\Gamma'}$.
Therefore $A$ is the inverse limit of $\Z/3\Z$ with the connecting map being the identity, whence $A\simeq \Z/3\Z$.
Moreover the action of $\stab_\Gamma(v)$ on $v*(K/\Gamma')$ is trivial.
Therefore $\widehat\Z[[\gt]]\widehat\otimes_{\widehat\Z[[\gt_w]]}A=(\Z/3\Z)[[\gt/\gt_w]]=(\Z/3\Z)[[\gb/\gb_w]]=(\Z/3\Z)[[X^\omega]]$.
\end{proof}

\subsection{The twisted twin of Grigorchuk's group}\label{sec:twin}
We sketch here the computation of the congruence kernel for a new
example of group, which is a twisted relative of Grigorchuk's first
group~\cite{grigorchuk:burnside}. More details will appear
in~\cite{bartholdi-s:tt}.

We define the automorphisms of the binary tree
\begin{align*}
a&=\sigma_{12},&b&=(c,a)\\
c&=(a,d),&d&=(1,b).
\end{align*}
We let $H$ be the group generated by $a,b,c,d$.  The group $H$ is a
just-infinite, torsion, level-transitive, self-similar, recurrent,
regular branch group.

Define $K$ as the normal closure of $\{[a,b],[b,c],[c,d],[d,b],bcd\}$ in $H$.
We have
\begin{prop}[\cite{bartholdi-s:tt}]
The group $K$ contains $K\times K$, and $K$ contains $\stab_H(3)$.
\end{prop}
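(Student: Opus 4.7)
The plan is to prove the two assertions in sequence, both by finite computations, following the template of the Pervova example of the previous subsection.

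For the inclusion $K\times K\le K$ (understood as $0*K,\,1*K\le K$ under the natural wreath embedding), it suffices to verify that $0*g\in K$ for each of the five normal generators $g\in\{[a,b],[b,c],[c,d],[d,b],bcd\}$. Indeed, normality of $K$ in $H$ together with recurrence of $H$ (any $h\in H$ equals $x\at 0$ for some $x\in\stab_H(0)$, of the form $x=(h,y)$) yields $(0*g)^x=0*g^h\in K$ for every $h\in H$. Applying this to each of the five generators and using that their $H$-conjugates together generate $K$, we obtain $0*K\le K$; the symmetric inclusion $1*K\le K$ follows by conjugating with $a$, using normality of $K$ in $H$ once more. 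The verification that $0*g\in K$ for a given generator is a bounded combinatorial search: compute the wreath decomposition of $g$, and exhibit a short word in $a,b,c,d$ which lies in $K$ and has wreath decomposition equal to $(g,1)$. Among the five generators, $[c,d]$ and $[d,b]$ already lie in $\stab_H(1)$ and, by direct computation, have wreath decompositions of the form $(1,\cdot)$; these provide the starting material from which to build $(g,1)$ for the other three generators.

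For the inclusion $\stab_H(3)\le K$, we pass to the finite quotient $H/\stab_H(3)$, which is a finite $2$-group (a subquotient of $\aut X^3$, of order at most $2^7=128$). The claim is equivalent to $K\,\stab_H(3)=K$, i.e.\ to $[H:K\,\stab_H(3)]=[H:K]$. One computes $[H:K]$ from the abelianization of $H$ together with the relations imposed by the five generators of $K$, just as in the proof of Proposition~\ref{prop:PSK} for the Pervova group; one then computes $[H:K\,\stab_H(3)]$ directly inside the finite $2$-group $H/\stab_H(3)$, by enumerating the normal closure of the images of the five generators. If the two indices match, the conclusion follows.

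The main obstacle is the bookkeeping of wreath decompositions in the first part: the five generators of $K$ are defined asymmetrically, so each case must be handled separately, and one must verify both that the candidate word really lies in $K$ (and not merely in some larger normal subgroup such as $[H,H]$) and that its wreath decomposition is exactly $(g,1)$. Once these five identities are in hand, the rest of the argument is formal, and the second part reduces to routine index computations inside a finite $2$-group of order at most $128$.
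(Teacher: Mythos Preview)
The paper does not supply a proof of this proposition; it is quoted from~\cite{bartholdi-s:tt}, a companion paper listed as ``in preparation''. There is therefore no in-paper argument to compare against, and your proposal must stand on its own.

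Your strategy for the first assertion is sound: by recurrence of $H$ and normality of $K$ in $H$, it suffices to verify $0*g\in K$ for each of the five normal generators $g$, and your observation that $[c,d]=(1,[d,b])$ and $[d,b]=(1,[b,a])$ (so that conjugation by $a$ already yields $0*[d,b]$ and $0*[a,b]$ in $K$) is a correct starting point. The remaining three verifications are finite but not purely mechanical; you are right to flag them as the main burden.

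For the second assertion your plan has a real gap. You propose to compute $[H:K]$ ``from the abelianization of $H$ together with the relations imposed by the five generators of $K$'', in analogy with Proposition~\ref{prop:PSK}. But in Pervova's example the branching subgroup contained $\Gamma'$ by construction, so $\Gamma/K$ was visibly a quotient of $\Gamma^{\mathrm{ab}}$ and its order could be read off. Here $K$ is the normal closure of the four commutators $[a,b],[b,c],[c,d],[d,b]$ together with $bcd$; the commutators $[a,c]$ and $[a,d]$ are \emph{not} among the listed generators, so $H'\le K$ is not immediate, and without it the abelianization only yields the lower bound $[H:K]\ge|(H/K)^{\mathrm{ab}}|$. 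Since one always has $[H:K]\ge[H:K\,\stab_H(3)]$, matching the right-hand side to this abelianization bound does not force equality and hence does not prove $\stab_H(3)\le K$. To close the argument you must either first establish $H'\le K$ (for instance by exhibiting $[a,c]$ and $[a,d]$ as products of conjugates of the five generators --- another finite search, but one you have not carried out), or compute $H/K$ by other means, for example from an $L$-presentation of $H$.
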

\begin{cor}\label{cor:TSK}
The groups $\widetilde{H}$ and $\overline{H}$ coincide.
\end{cor}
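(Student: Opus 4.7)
The plan is to show that for every $n\geq0$ one has $\stab_H(n+3)\leq\rist_H(n)$. Together with the trivial inclusion $\rist_H(n)\leq\stab_H(n)$, this means the branch topology and the congruence topology on $H$ have the same basis of neighborhoods of the identity, and hence $\widetilde H$ and $\overline H$ coincide.

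Fix $n\geq0$ and take $g\in\stab_H(n+3)$. Since $g$ fixes every vertex of level $n$, it decomposes through the wreath embedding at level $n$ as the commuting product $g=\prod_{v\in X^n}v*g\at v$. Because $H$ is self-similar, each state $g\at v$ lies in $H$; and because $g$ fixes every vertex of level $n+3$, each $g\at v$ fixes every vertex of level $3$ in the subtree at $v$. Thus $g\at v\in\stab_H(3)$ for every $v\in X^n$.

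Now invoke the Proposition stated just before: $\stab_H(3)\leq K$, and $K$ is a branching subgroup (since $K\supseteq K\times K$). Hence $g\at v\in K$, and so $v*g\at v\in v*K\leq\rist_H(v)\leq\rist_H(n)$. The product $g$ of these factors therefore belongs to $\rist_H(n)$, giving the inclusion $\stab_H(n+3)\leq\rist_H(n)$ as required.

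There is no real obstacle here; once the Proposition is in hand, the only point that needs attention is using self-similarity of $H$ to ensure that the level-$n$ states of an element of $H$ lie in $H$ itself (not merely in $\aut X^*$), so that the hypothesis $\stab_H(3)\leq K$ can be applied to each $g\at v$ individually. The argument is parallel to (and simpler than) the computation used for Pervova's examples in Proposition~\ref{prop:PSK}, the only difference being that here one uses level $3$ rather than level $2$ as dictated by the containment $\stab_H(3)\leq K$.
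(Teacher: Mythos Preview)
Your proof is correct and is precisely the argument the paper has in mind: the corollary is stated without proof, as the analogue of Proposition~\ref{prop:PSK}, and your write-up simply makes explicit the cofinality $\stab_H(n+3)\leq X^n*K\leq\rist_H(n)\leq\stab_H(n)$ that follows from $K\times K\leq K$ and $\stab_H(3)\leq K$. The only point worth noting is that you correctly flagged the use of self-similarity to ensure $g\at v\in H$; this is indeed among the listed properties of $H$.
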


\begin{prop}[\cite{bartholdi-s:tt}]\label{prop:eviltwin fi}
  Any finite-index normal subgroup $N$ of $H$ contains $X^n*[K,H]$
  for some $n\geq0$.
\end{prop}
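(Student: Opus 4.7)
The plan is to adapt the proof of~\cite{pervova:profinitecompletions}*{Theorem~3.2} (the analogous statement for Pervova's group) to the twisted twin $H$, splitting the argument into two phases: first, reduce to the weaker containment $N\supseteq X^n*K'$ using the general branch-group machinery of this paper; second, refine to the stated conclusion about $X^m*[K,H]$ using the specific structure of $H$.

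For the first phase, since $H$ is weakly branch and generated by the finite-state elements $a,b,c,d$, Proposition~\ref{prop:N contains rist'} gives $N\supseteq\rist_H(n)'$ for some $n$. Because $K$ contains both $K\times K$ and $\stab_H(3)$, the group $H$ is regular branch with branching subgroup $K$, and Corollary~\ref{cor:rist cofinal to K} (applicable since $H$ is level-transitive, self-similar, and recurrent) gives $\rist_H(n)\supseteq X^n*K$ for all $n$. Hence $N\supseteq(X^n*K)'=X^n*K'$.

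For the second phase, the key identity is $v*[k,h]=[v*k,\tilde h]$, valid whenever $\tilde h\in\stab_H(v)$ satisfies $\tilde h\at{v}=h$; such $\tilde h$ exists because $H$ is recurrent. Thus $v*[k,h]\in N$ as soon as $v*k$ and $\tilde h$ commute modulo $N$. Showing this commutation, for all $v\in X^m$, $k\in K$, $h\in H$, and for $m$ sufficiently large, is the heart of the argument. The relevant structural input is that $H$ acts trivially on $K/[K,H]$ by definition, and that $K/[K,H]$ is a quotient of the finite group $K/K'$ (finite because $K$ is finitely generated---by Schreier's lemma, since $K$ has finite index in $H$---and torsion, since $H$ is). The main obstacle is quantitative: one must control how large $m$ must be in terms of the index $[H:N]$, which requires a detailed analysis of the $H$-module structure of $(K/K')^{X^m}$ and its interaction with the descending chain $\{X^n*[K,H]\}_{n\geq0}$. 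Full details appear in~\cite{bartholdi-s:tt}.
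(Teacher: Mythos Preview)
The paper itself gives no proof of this proposition; it is stated with attribution to~\cite{bartholdi-s:tt} and nothing more. So there is no argument in the paper to compare against, and your proposal already goes further than the paper by supplying a partial argument before deferring to the same reference.

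Your Phase~1 is correct in substance, with one mis-citation. Corollary~\ref{cor:rist cofinal to K} gives the \emph{opposite} inclusion, namely $\rist_H(m+n_0)\le X^m*H_\#$ for some fixed $n_0$. The inclusion you actually use, $X^n*K\le\rist_H(n)$, is the trivial direction: each $v*K$ lies in $K\le H$ and is supported below $v$, hence $v*K\le\rist_H(v)$; this is essentially the content of Lemma~\ref{lem:(regular) branch}. With that correction, $N\supseteq\rist_H(n)'\supseteq(X^n*K)'=X^n*K'$ is valid. (You could equally well invoke Corollary~\ref{cor:N contains rist'} in place of Proposition~\ref{prop:N contains rist'}, since $H$ is level-transitive.)

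Your Phase~2 is only a sketch, and the reduction you propose is not genuinely a reduction. The identity $v*[k,h]=[v*k,\tilde h]$ is correct, but the reformulation ``$v*k$ and $\tilde h$ commute modulo $N$'' is exactly the statement $v*[k,h]\in N$ that you are trying to prove, so nothing has been gained. The structural facts you list (finiteness of $K/K'$, triviality of the $H$-action on $K/[K,H]$) are correct but do not by themselves close the gap between $X^n*K'$ and $X^m*[K,H]$; the latter is strictly larger, and one needs specific information about how $N$ sits inside the $H$-module $(K/K')^{X^n}$. You acknowledge this and defer to~\cite{bartholdi-s:tt}, which is exactly what the paper does; your sketch is thus honest about its limits, but one should not read Phase~2 as an outline of a proof so much as a list of ingredients whose assembly remains to be done.
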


Let $\sigma$ be the endomorphism of $H$ induced by $a\mapsto c, b\mapsto d^a, c\mapsto b, d\mapsto c^a$.
\begin{thm}\label{thm:eviltwin kernel}
  The branch and congruence kernels of $H$ are
  $(\mathbb{Z}/4\mathbb{Z})[[X^\omega]]$.
\end{thm}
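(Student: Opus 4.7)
The plan is to follow the strategy used in the previous subsection for the Pervova examples. By Corollary~\ref{cor:TSK} the rigid kernel of $H$ vanishes, so $\K := \ker(\widehat H \to \overline H) = \ker(\widehat H \to \widetilde H)$; it thus suffices to identify the branch kernel as a $\widetilde H$-module. The subgroup $K$ is a branching subgroup of the self-similar, regular branch group $H$, and $K \supseteq \stab_H(3)$ together with recurrence of $H$ yields $\rist_H(n+3) \le X^n * \stab_H(3) \le X^n * K$, while $X^n * K \le \rist_H(n)$ holds automatically. Hence $\{X^n * K\}$ is cofinal with $\{\rist_H(n)\}$ and Corollary~\ref{cor:transitive regular branch} applies with this particular $K$, giving
\begin{align*}
\K \;\cong\; \widehat\Z[[\widetilde H]] \,\widehat\otimes_{\widehat\Z[[\widetilde H_w]]}\, A, \qquad A \;=\; \varprojlim_{e \geq 0} K/(K'K^e).
\end{align*}

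Next one shows that the endomorphism $\sigma$ of $H$ specified in the statement restricts on $K$ to the map $k \mapsto (k, 1)$ of Remark~\ref{rmk:endomorphism}. A direct wreath computation gives $\sigma(a) = (a, d)$, $\sigma(b) = (b, 1)$, $\sigma(c) = (c, a)$, $\sigma(d) = (d, a)$, so each $\sigma(x)$ for $x \in \{a, b, c, d\}$ has trivial top permutation and first coordinate equal to $x$. By induction the same holds for every $h \in H$, and hence conjugation by $\sigma(h)$ sends $(g, 1)$ to $(hgh^{-1}, 1)$. Combined with the identities $\sigma([a, b]) = ([a, b], 1)$, $\sigma([b, c]) = ([b, c], 1)$, $\sigma([c, d]) = ([c, d], 1)$, $\sigma([d, b]) = ([d, b], 1)$ and $\sigma(bcd) = (bcd, 1)$ on the normal generators of $K$, this forces $\sigma(k) = (k, 1)$ for every $k \in K$. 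Since $H$ is torsion and $K$ has finite index in $H$, the abelianization $K/K'$ is a finite abelian group, and Remark~\ref{rmk:endomorphism} identifies $A$ with $\bigcap_{n \geq 0} \sigma^n(K/K')$.

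The remaining and main task is the computation of $K/K'$ and of the induced action of $\sigma$; this is carried out in~\cite{bartholdi-s:tt}, starting from the five normal generators above and the wreath relations of $H$ to obtain a finite presentation of $K^{\mathrm{ab}}$ as a $\Z[H/K]$-module. Iterating $\sigma$ then annihilates every factor but one, leaving $A \cong \Z/4\Z$. Finally, each vertex stabilizer acts trivially on its local copy $v * (K/K')$, so $\widetilde H_w$ acts trivially on $A$, and the complete tensor product degenerates to $A[[\widetilde H / \widetilde H_w]] = A[[\partial T]] = (\Z/4\Z)[[X^\omega]]$, proving the theorem. The explicit identification of $\bigcap_n \sigma^n(K/K')$ is the principal obstacle; every other step reduces to results already established in this paper.
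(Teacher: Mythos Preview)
Your overall strategy matches the paper's: reduce to the branch kernel via Corollary~\ref{cor:TSK}, invoke Corollary~\ref{cor:transitive regular branch}, identify the endomorphism $\sigma$ with the shift $k\mapsto(k,1)$, and compute $A$ via Remark~\ref{rmk:endomorphism}. Your verification that $\sigma$ realises the shift on $K$ is correct.

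There is, however, a genuine gap. You work throughout with $K/K'$ and then assert without argument that ``each vertex stabilizer acts trivially on its local copy $v*(K/K')$''. By recurrence this amounts to saying that $H$ acts trivially on $K/K'$, i.e.\ that $[K,H]\le K'$; you give no reason for this, and it is not obvious. The paper avoids the issue entirely by invoking Proposition~\ref{prop:eviltwin fi}: since every finite-index normal subgroup contains some $X^n*[K,H]$, the filter $\{X^n*[K,H]\}$ is cofinal with the profinite filter, so one may compute $A$ as $\bigcap_n\sigma^n(K/[K,H])$ rather than $\bigcap_n\sigma^n(K/K')$. On $K/[K,H]$ the $H$-action is trivial \emph{by definition}, so the module structure over $\widetilde H_w$ is automatically trivial. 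This is not a cosmetic change: it is precisely what makes the final step go through.

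A related point: the computation you defer to~\cite{bartholdi-s:tt} is, according to the paper, the determination of $K/[K,H]\cong C_4\times C_2$ (with $bcd$ generating the $C_4$ and $[a,b]$ the $C_2$) together with the explicit effect of $\sigma$ modulo $[K,H]$ (namely $bcd\mapsto bcd[a,b]$ and $[a,b]\mapsto(bcd)^2$), from which one reads off that $\sigma(K/[K,H])$ is cyclic of order~$4$ and $\sigma$-invariant. That source does not, as far as the paper indicates, compute $K/K'$ or the $\sigma$-action on it. So even your deferred step is pointed at the wrong quotient. The fix is simply to use Proposition~\ref{prop:eviltwin fi} from the start and replace $K'$ by $[K,H]$ throughout; then both the computation of $A$ and the triviality of the $\widetilde H_w$-action become the short explicit calculations given in the paper.
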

\begin{proof}
  Both kernels are equal by Corollary~\ref{cor:TSK}, so we concentrate
  on the former.  We shall compute the group $A$ from
  Corollary~\ref{cor:transitive regular branch} using an improvement
  of Remark~\ref{rmk:endomorphism}.  Namely, we have an endomorphism
  $\sigma$ of $H$ inducing $g\mapsto(g,1)$ on $K$.  Moreover, by
  Proposition~\ref{prop:eviltwin fi}, we only have to consider the
  group $K/[K,H]$.  This group inherits the endomorphism $\sigma$, and
  thus we have $A=\bigcap_{n\geq0}\sigma^n(K/[K,H])$.  It is proved
  in~\cite{bartholdi-s:tt} that the group $K/[K,H]$ is isomorphic to $C_4\times
  C_2$, the $C_4$ being generated by $bcd$ and the $C_2$ by $[a,b]$.
  Further computations show that $\sigma$ sends $bcd$ to $bcd[a,b]$
  and $[a,b]$ to $(bcd)^2$ mod $[K,H]$.  Thus $\sigma(K/[K,H])$ is a
  cyclic group of order $4$, generated by $bcd[a,b]$, and it is
  invariant under $\sigma$ (it is not fixed pointwise, however).

Finally, the action by conjugation of $H$ on $K/[K,H]$ is obviously trivial.
\end{proof}

\subsection{The Hanoi tower group}\label{sec:Hanoi}
This group was introduced by Sunic and Grigorchuk;
see~\cite{grigorchuk-s:hanoi} or~\cite{boston-c-g:prop}*{page 1477}.
It models the ``Towers of Hanoi'' game in the sense that the graph of
the action of $\Gamma$ on $X^n$ describes the space of moves of $n$
disks on a $3$-peg game.

We define the following automorphisms of the $3$-ary rooted tree:
\begin{align*}
a&=(a,1,1)\sigma_{23},&b&=(1,b,1)\sigma_{13},&c&=(1,1,c)\sigma_{12}.
\end{align*}
We let $\Gamma$ be the group generated by $a,b,c$.
Since the definition is symmetric in $a,b,c$, any permutation of the letters $a,b,c$ yields an automorphism of $\Gamma$.

The Abelianization $\Gamma/\Gamma'$ of $\Gamma$ is isomorphic to $C_2\times C_2\times C_2$, but we have $\gb/\gb'\simeq C_2$.
Therefore $\Gamma'$ is not a congruence subgroup.
However, the situation is more subtle than in the previous examples.

\begin{thm}\label{thm:HCK}
\begin{itemize}
\item
The kernel of $\gh\to\gt$ is free profinite Abelian;
\item
The kernel of $\gt\to\gb$ is a Klein group of order 4;
\item
The kernel of $\gh\to\gb$ is metabelian and torsion-free, but is not nilpotent.
\end{itemize}
\end{thm}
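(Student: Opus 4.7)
The plan is to address the three kernels in the stated order, combining the structural results of Section~2 with explicit computations in the Hanoi group.

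For the branch kernel, I would apply Corollary~\ref{cor:transitive regular branch}. First, identify a branching subgroup $K$; the standard verification that $1*[x,y]\in\Gamma'$ for the commutators of the generators, together with the $S_3$-symmetry permuting $a,b,c$, shows that $\Gamma'$ is branching. Then compute $K/K'$ explicitly as a finitely generated abelian group, and analyse the endomorphism $\sigma\colon g\mapsto(g,1,1)$ it inherits from the recursion. The module $A=\varprojlim_{n,e}\sigma^n(K/(K'K^e))$ of Remark~\ref{rmk:endomorphism} must then be shown to be a free profinite abelian group; this is qualitatively different from the Pervova and twisted-twin examples, where $A$ was finite cyclic. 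The conclusion then follows from $\K\cong\widehat\Z[[\gt]]\widehat\otimes_{\widehat\Z[[\gt_w]]}A\cong A[[X^\omega]]$, which is free profinite abelian as soon as $A$ is.

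For the rigid kernel, I would use $\ker(\gt\to\gb)=\varprojlim\stab_\Gamma(n)/\rist_\Gamma(n)$. By Corollary~\ref{cor:rist cofinal to K} the systems $\{\rist_\Gamma(n)\}$ and $\{X^n*K\}$ are cofinal, so the task reduces to computing $\stab_\Gamma(n)/(X^n*K)$ at small $n$, exhibiting two independent parity-type obstructions coming from the wreath coordinates in $\stab_\Gamma(n)\hookrightarrow\Gamma^{X^n}$, verifying that these generate a Klein four group, and showing that the connecting maps are eventually isomorphisms. The three-fold $S_3$-symmetry among the generators should cut the bookkeeping drastically.

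For the congruence kernel, combine the previous parts via the short exact sequence
\begin{equation*}
1\longrightarrow\ker(\gh\to\gt)\longrightarrow\ker(\gh\to\gb)\longrightarrow\ker(\gt\to\gb)\longrightarrow1.
\end{equation*}
Metabelianness is immediate since both ends are abelian. For torsion-freeness I would explicitly lift the two generators of the Klein four quotient to $\gh$, compute their squares in the branch-kernel coordinates, and show that these squares are non-zero in the free profinite abelian module $A[[X^\omega]]$; hence no lift of a non-trivial Klein-four element can be torsion. For non-nilpotence, I would exhibit a lift $k$ of a Klein-four generator whose conjugation action on a $\widehat\Z$-summand of the branch kernel is multiplication by an element of $\widehat\Z[[\gb]]$ that is not a root of unity modulo any open ideal, so that the lower central series $[\K,[\K,\dots,[\K,\K]\dots]]$ never becomes trivial.

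The main obstacle is Part~1: obtaining a clean description of $\Gamma'/(\Gamma')'$ and of the $\sigma$-action on it, and then proving that $A$ is genuinely \emph{free} (not merely torsion-free) as a profinite $\widehat\Z$-module. Once $A$ is pinned down, the rigid kernel computation is a finite-level bookkeeping exercise, and the three properties of the congruence kernel follow by direct extension-theoretic analysis of the sequence above.
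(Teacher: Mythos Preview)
Your plan for the branch kernel and for metabelianness and torsion-freeness of the congruence kernel matches the paper's approach closely: compute $\Gamma'/\Gamma''$, analyse the endomorphism $\sigma$ on it, and for torsion-freeness lift an arbitrary Klein-four element and check its square lands on a non-zero element of a $\widehat\Z$-summand (the paper reduces to the quotient $\widehat\Z^3[X^0]$ to make this a one-line computation: any lift of $bc$ squares to $e^{2\beta+1}$).

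For the rigid kernel the paper takes a different, more structured route. Rather than computing $\stab_\Gamma(n)/(X^n*\Gamma')$ directly, it builds an auxiliary profinite group $W=A[[X^\omega]]\rtimes\aut X^*$ with $A=\Gamma/\Gamma'$, embeds $\Gamma$ in $W$, identifies the closure there with $\gt$, and then describes $\gt$ as the vanishing locus of an explicit finite list of $\Z/2\Z$-valued functions. This scaffolding makes the rank counts systematic. Your bare-hands approach is viable, but note that the inverse limit is a double limit $\varprojlim_m\bigl(\bigcap_n\stab_\Gamma(n)(X^m*\Gamma')\bigr)/(X^m*\Gamma')$, and the crux is the relation $x_s\equiv(x_s,x_s,x_s)\pmod{\Gamma'}$ for the three elements $x_a=b^ac,\,x_b=c^ba,\,x_c=a^cb$; this is what makes the diagonal Klein four survive in the limit while everything else dies. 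You would have to discover that relation.

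Your non-nilpotence criterion is misformulated. The conjugation action of each non-trivial Klein-four element on $\widehat\Z^3$ is a half-turn: it sends two of $d,e,f$ to their inverses and fixes the third. So the action on each $\widehat\Z$-summand is by $\pm1$, which \emph{is} a root of unity. The correct argument is that for $v$ acting as $-1$ on a summand one has $[v,z]=z^{-2}$, hence the $n$-fold iterated commutator is $z^{\pm 2^n}$, non-trivial because $\widehat\Z$ is torsion-free. Equivalently, $u-1=-2$ is not nilpotent in $\widehat\Z$. Replace ``not a root of unity modulo any open ideal'' (a condition never satisfied in a profinite ring, since every unit in a finite quotient has finite order) with ``acts non-trivially on a torsion-free summand''.
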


\subsubsection*{A presentation of $\Gamma$}

The main tool for this section is a recursive presentation of
$\Gamma$.

We recall the general strategy in obtaining presentations by generators
and relations for self-similar groups; for more details
see~\cite{bartholdi:lpres} or \cite{sidki:pres}.

We note that $\Gamma$ is contracting with nucleus $N=\{1,a,b,c\}$.
By definition, this means that for any $g\in\Gamma$, there is an $n\geq0$ so that $g\at{w}$ is in $N$ for all $w$ of length at least $n$. The
only relations of length $\le3$ among elements of $N$ are
$a^2=b^2=c^2=1$. We consider thus the group $F=\langle
a,b,c|a^2,b^2,c^2\rangle$. The decomposition map
$\Gamma\to\Gamma\wr\sym(X)$ restricts to a map $N\to N^X\times\sym(X)$,
which induces a homomorphism $\psi:F\to F\wr\sym(X)$. Set
$K_0=1\triangleleft F$ and $K_{n+1}=\psi^{-1}(K_n^X)$ for all $n\ge0$.
\begin{lem}
  $\Gamma=F/\bigcup_n K_n$.
\end{lem}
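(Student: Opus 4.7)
The plan is to show the natural surjection $\pi:F\to\Gamma$ (well-defined because $a,b,c$ are involutions in $\Gamma$) has kernel $\bigcup_n K_n$. The central tool is the commutative square with vertical arrows $\pi$ and $\pi\wr\id$, horizontal arrows $\psi$ and $\bar\psi$, where $\bar\psi:\Gamma\to\Gamma\wr\sym(X)$ is the decomposition map on $\Gamma$; this square commutes by construction of $\psi$, and $\bar\psi$ is injective because $\Gamma$ acts faithfully on $X^*$.

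The inclusion $\bigcup_n K_n\subseteq\ker\pi$ is immediate by induction on $n$: $K_0=\{1\}$ is trivial, and if $g\in K_{n+1}$ then $\psi(g)\in K_n^X$, so the inductive hypothesis gives $(\pi\wr\id)\psi(g)=1$, hence $\bar\psi(\pi(g))=1$ by commutativity, hence $\pi(g)=1$ by injectivity of $\bar\psi$.

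For the reverse inclusion, I would induct on the reduced word length $|g|_F$ of elements $g\in\ker\pi$. The base case $|g|_F\le 3$ is exactly the content of the hypothesis that $a^2,b^2,c^2$ are the only relations of length $\le 3$ among elements of $N$: since these relations already hold in $F$, any such $g$ equals $1$ in $F$, hence lies in $K_0$. For the inductive step with $|g|_F>3$, decompose $\psi(g)=((g_1,g_2,g_3),\sigma)$; the commutative diagram together with $\pi(g)=1$ force $\sigma=1$ and each $g_i\in\ker\pi$, while the contracting property of the Hanoi group at the level of $F$-word-length gives $|g_i|_F<|g|_F$. Induction then yields $g_i\in K_{n_i}$, so $g\in K_{1+\max_i n_i}$.

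The main obstacle is establishing the strict contraction $|g_i|_F<|g|_F$ at the level of $F$, rather than merely in $\Gamma$ where it follows automatically from the contracting property with nucleus $N$. Because each generator has non-trivial state at exactly one coordinate of $\psi$, one gets the total-length bound $|g_1|_F+|g_2|_F+|g_3|_F\le|g|_F$ at once; upgrading this to strict inequality at every coordinate, for reduced words of length exceeding $3$, requires tracking how the permutations $\sigma_{23}$, $\sigma_{13}$, $\sigma_{12}$ mix positions as one reads the word, to ensure that no single coordinate absorbs all of the letters once the word is long and reduced.
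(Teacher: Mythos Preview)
Your approach is essentially the paper's: both directions reduce to the assertion that the lifted decomposition $\psi$ is contracting on $F$ with nucleus $\{1,a,b,c\}$. The paper packages the reverse inclusion as ``there exists $n$ such that all level-$n$ states of $w$ lie in $N$; since $w$ is trivial in $\Gamma$ and $N$ injects into $\Gamma$, those states are all $1$, so $w\in K_n$,'' while you unfold the same contraction as an induction on $|g|_F$. Your commutative-square treatment of the forward inclusion is fine and matches the paper's ``it is clear.''

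The only gap is the one you flag, and it is easier to close than you make it sound. The decisive observation is that the permutation attached to each generator $s\in\{a,b,c\}$ fixes exactly the coordinate $h(s)\in\{1,2,3\}$ at which the non-trivial state of $s$ sits. Write a reduced word $g=s_1\cdots s_n$ in $F$, set $\tau_k=\sigma_{s_1}\cdots\sigma_{s_k}$, and let $v_k$ be the coordinate receiving the contribution of $s_k$, so that $v_k^{\,\tau_{k-1}}=h(s_k)$. Applying $\sigma_{s_k}$, which fixes $h(s_k)$, gives $v_k^{\,\tau_k}=h(s_k)$ as well; hence $v_k=v_{k+1}$ would force $h(s_{k+1})=h(s_k)$ and thus $s_{k+1}=s_k$, contradicting reducedness. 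Consecutive letters therefore land at distinct coordinates, so $|g\at{x}|_F\le\lceil n/2\rceil<n$ for every $x\in X$ as soon as $n\ge2$. In particular your induction needs only the base case $|g|_F\le1$; the phrase ``relations of length $\le3$'' in the paper concerns the choice of defining relations for $F$ (so that $\psi$ is well-defined and $N$ embeds), not the threshold at which contraction starts.
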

\begin{proof}
  By our choice of relations in $F$, the decomposition $\psi$ is
  contracting on $F$, with nucleus $\{1,a,b,c\}$. Given $w\in F$: if
  $w$ belongs to $K_n$ for some $n$, then it is clear that $w$ is
  trivial in $\Gamma$. Conversely, if $w$ is trivial in $\Gamma$,
  there exists $n$ such that all $w$'s level-$n$ states belong to $N$
  and act trivially; so they are all $1$; so $w$ belongs to $K_n$.
\end{proof}

It is easy, using the Reidemeister-Schreier rewriting method, to
construct a normal generating set for $K_1$.  Indeed, $\psi$ induces
an injective map $F/K_1\to F\wr\sym(X)$, whose image has index
$32$. We introduce some notation:
\[d=[a,b],\quad e=[b,c],\quad f=[c,a],\quad g=d^c,\quad h=e^a,\quad i=f^b.\]
Then we obtain quite explicitly
\[K_1=\langle d^{-1}efi^{-1}ge^{-1}, he^{-1}d^{-1}fdi^{-1}, e^{-1}g^{-1}f^{-1}egf, e^{-1}dhe^{-2}d^{-1}h^2, hgd^{-2}f^{-1}gfe^{-1}\rangle^F.\]
We also consider the homomorphism $\tau:F\to F$ defined by
\[a\mapsto a,\quad b\mapsto b^c,\quad c\mapsto c^b.\]
\begin{lem}
  Let $\tau'$ be any homomorphism $F\to D_\infty=\langle
  x,y|x^2,y^2\rangle$ that sends $a$ to $1$ and $b,c$ to conjugates of
  $x,y$ respectively. Then $\tau'(K_1)=1$.
\end{lem}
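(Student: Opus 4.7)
The plan is a direct verification: compute $\tau'$ on the six basic commutators $d,e,f,g,h,i$ appearing in the given normal generating set of $K_1$, and then substitute into each of the five words displayed.

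Since $\tau'(a)=1$, any commutator with $a$ as one of its entries collapses. Concretely,
\begin{align*}
\tau'(d)&=[1,\tau'(b)]=1, & \tau'(f)&=[\tau'(c),1]=1,\\
\tau'(g)&=\tau'(d)^{\tau'(c)}=1, & \tau'(i)&=\tau'(f)^{\tau'(b)}=1,
\end{align*}
and $\tau'(h)=\tau'(e)^{\tau'(a)}=\tau'(e)$. So after applying $\tau'$, every basic commutator in the list is trivial except $e$ and $h$, which have the same image $\varepsilon:=[\tau'(b),\tau'(c)]$.

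First I would substitute these values into the five generators of $K_1$ in turn. Writing $\cdot$ for $\varepsilon$ and $1$ for the trivial images, the words become $\varepsilon\cdot\varepsilon^{-1}$, $\varepsilon\cdot\varepsilon^{-1}$, $\varepsilon^{-1}\cdot\varepsilon$, $\varepsilon^{-1}\cdot\varepsilon\cdot\varepsilon^{-2}\cdot\varepsilon^2$, and $\varepsilon\cdot\varepsilon^{-1}$ respectively; each is the identity in $D_\infty$ since all non-trivial factors commute with themselves. This establishes $\tau'(K_1)=1$ once we recall that $K_1$ is the \emph{normal} closure of these five words in $F$, and the image of any normal closure under a homomorphism is the normal closure of the images, hence trivial.

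No step is genuinely hard here; the only point that needs care is recording the signs and orders of the occurrences of $e$ and $h$ in the fourth generator $e^{-1}dhe^{-2}d^{-1}h^2$, where the $d$'s vanish and the remaining word in $\varepsilon$ must be read as $\varepsilon^{-1}\varepsilon\varepsilon^{-2}\varepsilon^2=1$. Everything else is a one-line substitution.
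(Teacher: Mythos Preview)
Your proof is correct and follows essentially the same approach as the paper: both observe that $\tau'$ kills $d,f,g,i$ and identifies $e$ with $h$, reducing the check to the signed exponent sum of $e$'s and $h$'s in each normal generator being zero. The paper phrases the final step as ``each normal generator of $K_1$ contains (with sign) as many $e$'s as $h$'s,'' while you spell out the five substitutions explicitly, but the argument is the same.
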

\begin{proof}
  We have $\tau'(d)=\tau'(f)=\tau'(g)=\tau'(i)=1$, and
  $\tau'(e)=\tau'(h)$. It remains to check that each normal generator of
  $K_1$ contains (with sign) as many $e$'s as $h$'s.
\end{proof}

It follows that, for all normal generators $r$ of $K_1$, we have
$\psi(\tau(r))=(r,1,1)$. Therefore, $K_n$ is normally generated by
$\bigcup_{i<n}\tau^i(K_1)$ for all $n>0$. We conclude:
\begin{prop}\label{prop:pres gamma}
  \[\Gamma=\langle
  a,b,c\mid a^2,b^2,c^2,\tau^n(w_1),\tau^n(w_2),\tau^n(w_3),\tau^n(w_4),\tau^n(w_5)\text{
    for all }n\ge0\rangle,\]
  where $w_1,\dots,w_5$ are the five normal generators of $K_1$ above.
\end{prop}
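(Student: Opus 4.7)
The plan is to combine the two preceding lemmas, $\Gamma=F/\bigcup_n K_n$ and $\tau'(K_1)=1$ for every $\tau'\colon F\to D_\infty$ as stipulated, together with the key identity $\psi(\tau(r))=(r,1,1)$ for normal generators $r$ of $K_1$.  Since $K_1$ is normally generated in $F$ by $w_1,\dots,w_5$ and $\tau$ is an endomorphism of $F$, the $F$-normal closure of $\{\tau^n(w_j)\}_j$ equals that of $\tau^n(K_1)$, so the proposition is equivalent to
\[
(\star)\quad K_n=\langle\tau^i(K_1):i<n\rangle^F\qquad\text{for every }n\ge 1,
\]
which I prove by induction on $n$, the base case $n=1$ being tautological.

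I first upgrade the key identity to the uniform formula
\[
\psi(\tau(g))=(g,\mu(g),\mu(g))\sigma_{23}^{\varepsilon(g)}\qquad\text{for every }g\in F,
\]
where $\mu\colon F\to F$ sends $a\mapsto 1$, $b\mapsto c$, $c\mapsto b$ and $\varepsilon\colon F\to C_2$ is the parity homomorphism.  The formula holds on the three generators (using $\tau(a)=a$, $\tau(b)=cbc$, $\tau(c)=bcb$) and extends to all of $F$ by multiplicativity.  Note that $\mu(K_1)=1$ is precisely the auxiliary lemma applied to $\tau'=\mu$, and $\varepsilon$ vanishes on every $K_m$ with $m\ge 1$ since $K_m\subseteq\ker(F\to\sym(X))$.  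Iterating this formula shows that for $r\in K_1$ the element $\tau^n(r)$ has $r$ as its state at the vertex $(1,\dots,1)\in X^n$, trivial states at every other level-$n$ vertex, and trivial permutation part at every level; in particular $\tau^n(K_1)\subseteq K_{n+1}$, so writing $L_{n+1}$ for the $F$-normal closure of $K_n\cup\tau^n(K_1)$ I obtain the easy inclusion $L_{n+1}\subseteq K_{n+1}$.

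For the reverse inclusion $K_{n+1}\subseteq L_{n+1}$, take $g\in K_{n+1}$ and decompose $\psi(g)=(g_1,g_2,g_3)$ with each $g_i\in K_n$ and trivial permutation part.  The aim is to build $h\in L_{n+1}$ with $\psi(h)=\psi(g)$, for then $gh^{-1}\in\ker\psi=K_1\subseteq L_{n+1}$ and hence $g\in L_{n+1}$.  By the iterated identity, $\tau^n(K_1)$-elements have their level-$n$ content concentrated at $(1^n)$; level-transitivity of $\Gamma$, which passes through to $F$, supplies, for each $v\in X^n$, an element of $F$ whose level-$n$ permutation sends $(1^n)$ to $v$.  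Conjugating elements of $\tau^n(K_1)$ by such elements inside $L_{n+1}$ and then projecting back to level $1$ realizes all of $K_n\times K_n\times K_n=K_n^X$ inside $\psi(L_{n+1})$, producing the required $h$.

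The main obstacle is this last step: transporting the very localized information supplied by the key identity (a copy of $r$ at a single leaf of length $n$) into control over the whole of $K_n^X$ at level $1$.  The transport works because of (a) the explicit form of $\psi\circ\tau$, which preserves first-coordinate content while killing the other coordinates on $K_1$, and (b) transitivity of the $F$-action on $X^n$, inherited from $\Gamma$ being level-transitive.  Once $(\star)$ holds, the proposition follows: the union over $n$ expresses $\bigcup_n K_n$ as the $F$-normal closure of $\bigcup_n\tau^n(K_1)$, and each $\tau^n(K_1)$ is $F$-normally generated by $\tau^n(w_1),\dots,\tau^n(w_5)$.
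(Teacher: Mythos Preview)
Your strategy matches the paper's: the paper in fact only asserts in one line that ``$K_n$ is normally generated by $\bigcup_{i<n}\tau^i(K_1)$'' once $\psi(\tau(r))=(r,1,1)$ is known, so you are supplying the missing induction. Your explicit formula $\psi(\tau(g))=(g,\mu(g),\mu(g))\sigma_{23}^{\varepsilon(g)}$ is correct and cleanly yields $\psi(\tau^i(r))=(\tau^{i-1}(r),1,1)$ for all $i\ge1$ and $r\in K_1$, since $\mu\circ\tau^{i-1}$ still sends $a\mapsto1$ and the lemma applies. (Minor quibble: your $\mu$ sends $b\mapsto c$, $c\mapsto b$, so it satisfies the lemma's hypothesis only after swapping the names $x,y$; the lemma's proof is insensitive to this.)

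There is, however, a real gap in the reverse inclusion. You build $h$ using only conjugates of $\tau^n(K_1)$ and claim this realises all of $K_n^X$ inside $\psi(L_{n+1})$. But $\psi$ of an $F$-conjugate of $\tau^n(r)$ has a single nontrivial coordinate equal to an $F$-conjugate of $\tau^{n-1}(r)$; products therefore give only $\bigl(\langle\tau^{n-1}(K_1)\rangle^F\bigr)^X$, which in general is a proper subgroup of $K_n^X$. Equivalently, matching level-$n$ states only pins down $h$ modulo $K_n$, not modulo $K_1$, so you do not obtain $\psi(h)=\psi(g)$.

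Two easy repairs. First, and closest to the paper's intent: work at level~$1$ and use \emph{all} $\tau^i(K_1)$ with $1\le i\le n$. From $\psi(\tau^i(r))=(\tau^{i-1}(r),1,1)$, level-transitivity, and recurrence of the $\psi$-action of $F$, one obtains in $\psi(L_{n+1})$ every triple $(s_1,s_2,s_3)$ with each $s_j$ a product of $F$-conjugates of elements $\tau^{j}(r)$ for $0\le j\le n-1$; by the inductive hypothesis these products exhaust $K_n$, so $K_n^X\subseteq\psi(L_{n+1})$ and your argument goes through. Second: keep your level-$n$ construction but weaken the intermediate claim to $gh^{-1}\in K_n$ (same level-$n$ states and trivial permutations at levels $<n$ force membership in $\ker\psi^{(n)}=K_n$); since $K_n\subseteq L_{n+1}$ by definition, this still gives $g\in L_{n+1}$.
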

\begin{rmk}
  One can check that $w_5$ is a consequence of
  $w_1,w_2,w_3,w_4,\tau(w_1)$ in $F$, and thus one has the simpler
  presentation
\[\Gamma=\langle a,b,c\mid
a^2,b^2,c^2,\tau^n(w_1),\tau^n(w_2),\tau^n(w_3),\tau^n(w_4)\text{ for
  all }n\ge0\rangle.\]
\end{rmk}

We deduce from Proposition~\ref{prop:pres gamma} a short proof of a
result by Sunic:
\begin{prop}[Sunic]\label{prop:hanoi endomorphism}
  The map $a\mapsto1$, $b\mapsto b$, $c\mapsto c$ extends to an
  endomorphism of $\Gamma$.
\end{prop}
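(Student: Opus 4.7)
The plan is to use the presentation of $\Gamma$ from Proposition~\ref{prop:pres gamma}. Let $\phi: F \to \Gamma$ be the homomorphism with $\phi(a) = 1$, $\phi(b) = b$, $\phi(c) = c$ (which is well-defined on $F$ since $b, c$ square to the identity in $\Gamma$). To show that $\phi$ descends to an endomorphism of $\Gamma$, it suffices to verify that every defining relator maps to the identity in $\Gamma$. The relators $a^2, b^2, c^2$ are killed trivially, so the work lies in showing $\phi(\tau^n(K_1)) = 1$ for every $n \geq 0$.

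The decisive observation is that $\phi$ factors through the quotient $F \to F/\langle\!\langle a \rangle\!\rangle \simeq D_\infty = \langle x, y \mid x^2, y^2\rangle$, which puts us exactly in position to invoke the Lemma stated just before Proposition~\ref{prop:pres gamma}. For each $n \geq 0$, I would consider the composition $\psi_n : F \xrightarrow{\tau^n} F \to D_\infty$; then $\phi \circ \tau^n$ factors through $\psi_n$, so it is enough to check that $\psi_n$ meets the hypotheses of the Lemma. First, $\psi_n(a) = 1$ because $\tau$ fixes $a$. Second, a straightforward induction on $n$, using $\tau(b) = b^c$ and $\tau(c) = c^b$ together with the identity $\tau(g^h) = \tau(g)^{\tau(h)}$, shows that $\tau^n(b)$ and $\tau^n(c)$ remain conjugates of $b$ and $c$ in $F$; their images in $D_\infty$ are therefore conjugates of $x$ and $y$ respectively. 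The Lemma then yields $\psi_n(K_1) = 1$, hence $\phi(\tau^n(K_1)) = 1$, as required.

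The argument is essentially a formal verification once the presentation of Proposition~\ref{prop:pres gamma} and the preceding Lemma are in hand; indeed the Lemma appears to have been tailored precisely for this application, since ``setting $a = 1$'' is compatible with the recursion because $\tau$ never reintroduces $a$. The one step worth writing out carefully is the inductive claim that $\tau^n(b)$ and $\tau^n(c)$ remain $F$-conjugate to $b$ and $c$; this is what allows the Lemma to be invoked uniformly in $n$, and it is the only place where the specific form of $\tau$ matters.
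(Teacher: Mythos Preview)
Your proof is correct and follows essentially the same approach as the paper's: both invoke the Lemma preceding Proposition~\ref{prop:pres gamma} to kill the relators of the presentation in Proposition~\ref{prop:pres gamma}. The paper's proof is a terse one-liner (``this map is an instance of a map $\tau'$ in the lemma''), while you have made explicit the key step that $\pi\circ\tau^n$ is again a map of the type covered by the Lemma for every $n\ge0$, via the inductive check that $\tau^n(b)$ and $\tau^n(c)$ remain $F$-conjugates of $b$ and $c$.
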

\begin{proof}
  This map is an instance of a map $\tau'$ in the lemma; all relators
  of $\Gamma$ are mapped to one under it.
\end{proof}

\subsubsection*{The branch kernel of $\Gamma$}

\begin{prop}
  The group $\Gamma'/\Gamma''$ is isomorphic to $\Z^3\times C_3$,
  where $\Z^3$ is generated by $[a,b],[b,c],[c,a]$ and $C_3$ is
  generated by $[[a,b],c]$.
\end{prop}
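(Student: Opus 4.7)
The plan is to prove the proposition by combining two complementary tools: three coordinate projections derived from Proposition~\ref{prop:hanoi endomorphism}, which give the $\Z^3$ lower bound, and the explicit presentation of Proposition~\ref{prop:pres gamma}, which gives the upper bound and identifies the torsion.

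For the lower bound, since the recursion defining $\Gamma$ is symmetric in $a,b,c$, Proposition~\ref{prop:hanoi endomorphism} together with the three cyclic permutations of the generators yields endomorphisms $\pi_a,\pi_b,\pi_c:\Gamma\to\Gamma$ sending $a$, $b$, $c$ respectively to $1$. The image of $\pi_a$ lies in $\langle b,c\rangle$; computing $\psi(bc)=((c,b,1),(123))$ gives $\psi((bc)^3)=((bc)^{-1},bc,bc)$ with trivial top permutation, and an easy induction shows $(bc)^{3^k}\neq1$ for all $k\ge0$, so $\langle b,c\rangle\cong D_\infty$ with derived subgroup infinite cyclic generated by $[b,c]$. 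The combined map $(\pi_a,\pi_b,\pi_c):\Gamma'\to\Z^3$ then sends $[b,c],[c,a],[a,b]$ to a signed basis, so these three commutators are $\Z$-linearly independent of infinite order modulo $\Gamma''$.

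For the upper bound, write $\Gamma=F/R$ with $F=C_2\ast C_2\ast C_2$ and $R$ the normal closure of the relators $\tau^n(w_i)$ from Proposition~\ref{prop:pres gamma}. Every $w_i$ lies in $F'$, so $R\le F'$ and $\Gamma'/\Gamma''=F'/F''R$. By the Kurosh subgroup theorem $F'$ is torsion-free; since $[F:F']=8$ and $\chi(F)=-1/2$, we get $\chi(F')=-4$, hence $F'$ is free of rank $5$ and $F'/F''\cong\Z^5$. An explicit Reidemeister--Schreier basis (using the transversal $\{1,a,b,c,ab,ac,bc,abc\}$) consists of $[a,b]$, $[b,c]$, $[c,a]$ together with two triple commutators which may be taken to be $[[a,b],c]$ and $[[b,c],a]$. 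Since the lower bound shows that the three double commutators already map to a $\Z^3$-summand of $\Gamma'/\Gamma''$, the problem reduces to showing that the quotient $\Z^2$ spanned by the two triple commutators collapses under $R$ to $C_3$ generated by $[[a,b],c]$.

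The main obstacle is the explicit linear-algebra reduction: one must abelianize $w_1,\dots,w_5$ and a few of their $\tau$-iterates in the chosen basis of $\Z^5$, keeping track of the $F^{\mathrm{ab}}$-conjugation action on triple commutators, and then compute the Smith normal form of the resulting matrix of relations. A useful simplification is that $\tau$ induces the identity on $F^{\mathrm{ab}}$ and an explicit integer endomorphism on $F'/F''$, so only finitely many $\tau$-iterates contribute information independent of the $\Z^3$-summand. The anticipated outcome is that the rank-$2$ lattice of triple commutators collapses to a cyclic group of order exactly~$3$, yielding $\Gamma'/\Gamma''\cong\Z^3\oplus C_3$ with the stated generators.
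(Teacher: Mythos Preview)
Your overall strategy matches the paper's: both reduce to abelianizing the relators $w_1,\dots,w_5$ from Proposition~\ref{prop:pres gamma} inside $F'/F''$ and reading off a Smith normal form. The paper does this directly with the six-element generating set $d,e,f,g=d^c,h=e^a,i=f^b$ of $\Gamma'$, in which the $w_i$ are already written; the resulting $5\times6$ exponent matrix has elementary divisors $1,1,3$, yielding $\Z^3\times C_3$ at once. Your extra ingredient---the three projections $\pi_a,\pi_b,\pi_c$ onto $D_\infty$---is a genuine improvement: it not only gives the $\Z^3$ lower bound, but since every relator (and each $\tau$-iterate and $F^{\mathrm{ab}}$-conjugate thereof) dies under every $\pi_s$, it forces the entire relation lattice $\overline R\subset F'/F''\cong\Z^5$ into the rank-$2$ kernel of $F'/F''\to\Z^3$. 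That collapses the Smith-normal-form computation to a two-dimensional one, which is conceptually cleaner than the paper's $5\times6$ matrix.

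The gap is that you stop at ``anticipated outcome'' and never carry out the decisive step: writing the $w_i$ (together with enough $F^{\mathrm{ab}}$-conjugates and $\tau$-iterates) in your rank-$5$ basis, projecting to the $\Z^2$ spanned by the triple commutators, and checking that the resulting sublattice has index exactly~$3$ with quotient generated by $[[a,b],c]$. This computation \emph{is} the proposition. Two practical remarks if you complete it: first, your claimed basis $\{[a,b],[b,c],[c,a],[[a,b],c],[[b,c],a]\}$ of $F'/F''$ deserves a one-line justification (e.g.\ by the change of variables $g=d\cdot[[a,b],c]$, $h=e\cdot[[b,c],a]$ linking it to the paper's generators); second, you rightly flag the need to include $F^{\mathrm{ab}}$-conjugates and $\tau$-iterates---a point the paper's terse proof passes over in silence---and you may find, as the paper implicitly does, that absorbing the $F^{\mathrm{ab}}$-action into the generating set (using $d,e,f,g,h,i$ rather than a free basis) makes the bookkeeping lighter.
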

\begin{proof}
  $\Gamma'$ is generated by $d,e,f,g,h,i$ in the notation introduced
  above. The exponent counts of these generators in the relations
  $r_1,\dots,r_5$ are given by the matrix
  \[\begin{pmatrix}
   -1 &   & 1 & 1 &   &-1\\
      &-1 & 1 &   & 1 &-1\\
    & & & & &\\
    &-3 & & & 3 &\\
    -2&-1&&2&1&
  \end{pmatrix}
  \rightsquigarrow\cdots\rightsquigarrow
  \begin{pmatrix}
  1&&2&-1&&-2\\
  &1&2&&-1&-2\\
  &&3&&&-3
  \end{pmatrix};\] its Smith normal form has two ones and a three on
  its diagonal; the three generators of infinite rank of
  $\Gamma'/\Gamma''$ may be chosen as $d,e,f$, while $eh^{-1}$
  generates a $C_3$ therein, and equals $dg^{-1}$ and $fi^{-1}$.
\end{proof}

\begin{prop}\label{prop:branch kernel hanoi}
The branch kernel of $\Gamma$ is isomorphic to $\Zh^3[[X^\omega]]$.
\end{prop}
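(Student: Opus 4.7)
The plan is to apply Corollary~\ref{cor:transitive regular branch}. The Hanoi tower group $\Gamma$ is self-similar, level-transitive, recurrent, and regular branch with maximal branching subgroup $K=\Gamma'$, so the branch kernel equals $\widehat\Z[[\gt]]\widehat\otimes_{\widehat\Z[[\gt_w]]}A$, where $A=\varprojlim_e K/(K'K^e)$ with connecting map induced by the endomorphism $\sigma\colon gK'\mapsto(g,1,1)K'$ of $K/K'$.

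From the preceding proposition we have $K/K'\cong\Z^3\oplus C_3$, with $\Z^3$ generated by $d=[a,b]$, $e=[b,c]$, $f=[c,a]$ and $C_3$ by $z=[[a,b],c]$. To pin down $\sigma$ I would use the wreath identities
\[
d^3=(d,d,d^{-1}),\qquad e^3=(e^{-1},e,e),\qquad f^3=(f,f^{-1},f),
\]
obtained by expanding $d=(ab)^2$ etc.\ and iterating the wreath decomposition (the $3$-cycle action of $ab,bc,ca$ on level one makes their cubes fix level one). In $K/K'$, each identity reads $3g=\sigma(g)\pm\tau_2(g)\pm\tau_3(g)$, where $\tau_i(h)=i*h$. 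The cyclic Hanoi automorphism $\phi\colon a\mapsto b\mapsto c\mapsto a$ cyclically permutes $d,e,f$ and fixes $z$ on $K/K'$, and corresponds to a rotation of the three root branches, giving $\tau_2=\phi\sigma\phi^{-1}$ and $\tau_3=\phi^2\sigma\phi^{-2}$; combined with the transposition symmetries swapping two branches, this determines $\sigma$ on the four-dimensional abelian group $K/K'$. The expected outcome of solving the resulting linear system is that $\sigma$ annihilates $z$ and restricts to an invertible endomorphism of $\Z^3$.

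Granted this, the inverse limit collapses the $C_3$-part and the $\Z^3$-part profinitely completes to $\Zh^3$, so $A\cong\Zh^3$. A separate verification shows the $\gt_w$-action on $A$ is trivial (the $\Gamma$-conjugation action on $\Gamma'/\Gamma''$ factors through $\Gamma/\Gamma'$ and moves the pure commutator classes $d,e,f$ only inside the $C_3$-part, hence trivially on $A=\sigma(K/K')$). The remark following Theorem~\ref{thm:description branch kernel} then yields $\K\cong A[[\gt/\gt_w]]\cong A[[\partial T]]=\Zh^3[[X^\omega]]$. The main obstacle is the explicit determination of $\sigma$: combining the wreath identities with the Hanoi symmetries to verify both that $z\in\ker\sigma$ and that $\sigma|_{\Z^3}\in\mathrm{GL}_3(\Z)$, rather than (for example) a non-invertible injection such as multiplication by~$3$, which would yield $(\prod_{p\neq 3}\Z_p)^3$ in place of $\Zh^3$.
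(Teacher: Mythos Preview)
Your overall strategy---apply Corollary~\ref{cor:transitive regular branch} and pin down the connecting endomorphism on $\Gamma'/\Gamma''$---is the same as the paper's. The paper, however, computes $\sigma$ \emph{directly}: it writes $(d,1,1)$, $(e,1,1)$, $(f,1,1)$ and $(dg^{-1},1,1)$ as explicit words in $\Gamma'$ (for instance $(d,1,1)=h^{-1}de$), reduces modulo $\Gamma''$, and reads off the matrix $\mathrm{diag}(1,-1,1)$ on the $\Z^3$-summand together with the annihilation of the $C_3$-factor.

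Your indirect route through the cube identities and the $\sym(3)$-symmetry has a genuine gap: it does not determine $\sigma$. The identities $d^3=(d,d,d^{-1})$, $e^3=(e^{-1},e,e)$, $f^3=(f,f^{-1},f)$ are all $\phi$-conjugate and so yield a single vector equation $3d=\sigma(d)+\phi\sigma(f)-\phi^{-1}\sigma(e)$ on $\Z^3$. Adding the reflection $\psi_1$ (fixing branch~$1$, swapping $b,c$) gives $\sigma\psi_1=\psi_1\sigma$, which forces $\sigma(f)=-\psi_1\sigma(d)$ and $\sigma(e)\in\ker(\psi_1+1)$; writing $\sigma(d)=\alpha d+\beta e+\gamma f$ and $\sigma(e)=xd+ye+xf$, the remaining equation reduces to just $2\alpha-y=3$ and $\beta+\gamma=x$. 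This is a three-parameter family, containing plenty of endomorphisms that are not in $\mathrm{GL}_3(\Z)$ (e.g.\ $\sigma(d)=2d$, $\sigma(e)=e$, $\sigma(f)=2f$), so you cannot conclude invertibility---nor $\sigma(z)=0$---from this data alone. The other reflections are $\phi$-conjugate to $\psi_1$ and add nothing new. To close the gap you must compute some $(g,1,1)$ directly, which is exactly what the paper does.

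A secondary point: your claim that conjugation moves $d,e,f$ ``only inside the $C_3$-part'' is false---for example $d^a=d^{-1}$ on the nose. The $\gt_w$-action on $A\cong\Zh^3$ is in fact non-trivial (this is the content of Theorem~\ref{thm:congruence kernel Hanoi}). This does not affect the present proposition, which concerns only the $A$-module structure and follows from the general remark after Theorem~\ref{thm:description branch kernel}; but your stated justification for that step is incorrect.
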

\begin{proof}
We consider the group $\Gamma'/\Gamma''\simeq\Z^3\times C_3$ with the endomorphism defined by the composition
\begin{align*}
\Gamma'/\Gamma''\xrightarrow{\sim}\Gamma'/\Gamma''\times\{1\}\times\{1\}\to\Gamma'/(\Gamma'')^3\to\Gamma'/\Gamma'',
\end{align*}
and we would like to compute the limit $\varprojlim\Gamma'/\Gamma''$.
Recall that $dg^{-1}=eh^{-1}=fi^{-1}$ mod $\Gamma''$, and note that in $\Gamma$, the relation $(dg^{-1},1,1)=h^{-1}de^3g^{-1}fh^{-1}f^{-1}e^{-1}$ holds.
This element is equal to $dg^{-1}e^2h^{-2}=1$ mod $\Gamma''$.

Next, we compute $(d,1,1)=h^{-1}de$, and similarly $(e,1,1)=e^{-1}if^{-1}$ and $(f,1,1)=efh^{-1}$ in $\Gamma$.
These elements are equal to $deh^{-1}$, $e^{-2}h$ and $feh^{-1}$ respectively, mod $\Gamma''$.
This shows that the endomorphism of $\Gamma'/\Gamma''$ is given by left multiplication by the matrix
\begin{align*}
\begin{pmatrix}
 1 & 0 & 0 & 0 \\
 0 &-1 & 0 & 0 \\
 0 & 0 & 1 & 0 \\
 1 &-1 & 1 & 0 
\end{pmatrix}
\end{align*}
with rows and columns indexed by $\{d,e,f,eh^{-1}\}$. Therefore, using
Corollary~\ref{cor:transitive regular branch}, we have proved the
proposition.
\end{proof}

\subsubsection*{The rigid kernel of $\Gamma$}
Let $A$ be the group $\Gamma/\Gamma'\simeq(\Z/2\Z)^3$, generated by $\bar a,\bar b,\bar c$.
Viewing $A$ as a ring (with component-wise multiplication), we denote by $A[[X^\omega]]$ the free profinite $A$-module on the profinite space $X^\omega=\varprojlim X^n$.
Then $A[[X^\omega]]$ is in particular an Abelian profinite group, and it is endowed with a natural action of $\aut X^*$.
If $m=\sum_{w\in X^\omega}w*m_w$ is an element of $A[[X^\omega]]$ and $g$ is in $\aut X^*$, then the action of $g$ on $m$ is given by
\begin{align*}
g\cdot m=\sum_{w\in X^\omega}w^{g^{-1}}*m_w.
\end{align*}
Thus we can consider the extension $W=A[[X^\omega]]\rtimes\aut X^*$ which is again a profinite group.
It is straightforward to check that $W$ is isomorphic (as a profinite
group) to the inverse limit of the groups $W_n=A[X^n]\rtimes\aut X^n$
with the natural maps $W_{n+1}\to W_n$ given by
\[\left(\sum_{w\in X^n,x\in X}wx*m_{wx},g\right)\mapsto\left(\sum_{w\in X^n,x\in X}w*m_{wx},g|X^n\right).\]

We define the following elements of $W$:
\begin{align*}
\alpha&=(1^\omega *\bar a,a),&\beta&=(2^\omega *\bar b,b),&\gamma&=(3^\omega *\bar c,c),
\end{align*}
and we let $G$ be the subgroup of $W$ generated by $\alpha,\beta,\gamma$.
\begin{prop}
The natural epimorphism $W\to\aut X^*$ restricts to an isomorphism $\pi:G\to\Gamma$.
\end{prop}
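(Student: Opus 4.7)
\medskip

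\textbf{Proof plan.}

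Surjectivity of $\pi$ is immediate: the projection $W=A[[X^\omega]]\rtimes\aut X^*\twoheadrightarrow\aut X^*$ sends $\alpha,\beta,\gamma$ to $a,b,c$, and these generate $\Gamma$. The plan for injectivity is to exhibit the inverse map explicitly: I will construct a homomorphism $\phi\colon\Gamma\to G$ with $\phi(a)=\alpha$, $\phi(b)=\beta$, $\phi(c)=\gamma$, and it will then be immediate that $\phi\circ\pi=\id_G$ and $\pi\circ\phi=\id_\Gamma$. To produce $\phi$ I must verify that $\alpha,\beta,\gamma$ satisfy, in $W$, every defining relation of $\Gamma$ from Proposition~\ref{prop:pres gamma}, i.e.\ $a^2,b^2,c^2$ and $\tau^n(w_i)$ for all $n\ge0$ and $i=1,\dots,5$.

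The squares are essentially free: using that $a$ fixes the boundary point $1^\omega$ and that $A\simeq(\Z/2\Z)^3$ has exponent $2$, one computes $\alpha^2=(1^\omega*\bar a+a\cdot 1^\omega*\bar a,a^2)=(2\cdot 1^\omega*\bar a,1)=1$, and symmetrically $\beta^2=\gamma^2=1$. Before tackling the iterated relators I first check that the generators of $G$ obey the very same wreath recursions as the generators of $\Gamma$, namely $\alpha=(\alpha,1,1)\sigma_{23}$, $\beta=(1,\beta,1)\sigma_{13}$, $\gamma=(1,1,\gamma)\sigma_{12}$ under the natural self-similar isomorphism $W\cong W\wr\sym(X)$ induced by the decomposition $A[[X^\omega]]=\bigoplus_xA[[xX^\omega]]$. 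This is straightforward from the fact that the $A$-part of $\alpha$, namely $1^\omega*\bar a$, is concentrated in the first coordinate. A consequence is that $G$ is self-similar in $W$, so states of elements of $G$ remain in $G$.

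For the iterated relators I argue by induction on $n$. The base case $n=0$ reduces to verifying $w_i(\alpha,\beta,\gamma)=1$ for the five short words $w_i$ given above Proposition~\ref{prop:pres gamma}. Since $\pi(w_i(\alpha,\beta,\gamma))=w_i(a,b,c)=1$ in $\Gamma$, the only content is that the $A$-contribution cancels; this is a direct computation summing the terms $(s_1\cdots s_{k-1})\cdot(s_k^\omega*\bar{s_k})$ along each relator, which is routine because the relators are short commutator expressions. For the inductive step, I invoke the lemma preceding Proposition~\ref{prop:pres gamma} stating that for every normal generator $r$ of $K_1$ one has $\psi(\tau(r))=(r,1,1)$ in $F\wr\sym(X)$. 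Combined with the recursions $\alpha=(\alpha,1,1)\sigma_{23}$, $\beta=(1,\beta,1)\sigma_{13}$, $\gamma=(1,1,\gamma)\sigma_{12}$, which propagate the $F$-wreath computation to $W$, this identity lifts to the statement that $\tau(r)(\alpha,\beta,\gamma)$ decomposes in $W\wr\sym(X)$ as $(r(\alpha,\beta,\gamma),1,1)$ times a permutation. Applied to $r=\tau^{n-1}(w_i)$ and using the induction hypothesis $r(\alpha,\beta,\gamma)=1$, we conclude $\tau^n(w_i)(\alpha,\beta,\gamma)=1$.

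The main obstacle is the lifting of $\psi(\tau(r))=(r,1,1)$ from $F\wr\sym(X)$ to $W\wr\sym(X)$: one must argue that the $A$-parts appearing in the two non-distinguished coordinates also vanish, not just the tree-parts. This requires showing that whenever $r$ is a normal generator of $K_1$, the $A$-contribution of $\tau(r)(\alpha,\beta,\gamma)$ at coordinates $2$ and $3$ is zero; the key input is again the auxiliary lemma that any homomorphism $F\to D_\infty$ sending $a\mapsto1$ and $b,c$ to conjugates of the two generators kills $K_1$, which gives the parity control on the $\bar b$- and $\bar c$-exponents needed for the $A$-parts to telescope to zero on the non-first coordinates.
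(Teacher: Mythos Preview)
Your strategy---verifying the defining relations of $\Gamma$---is different from the paper's, which does not touch the presentation at all: it builds the inverse $\psi:\Gamma\to G$ directly from contraction, setting $\psi(g)=(\psi_M(g),g)$ with $\psi_M(g)=\sum_{w\in X^n}w*\psi_M(g\at{w})$ for $n$ large enough that every $g\at{w}$ lies in the nucleus $\{1,a,b,c\}$, and then checks multiplicativity by induction on word length. Your key observation, that $\alpha,\beta,\gamma$ satisfy the same wreath recursions as $a,b,c$ and hence the square $D\circ\Phi=(\Phi\wr\id)\circ\psi$ commutes (with $\Phi:F\to W$ the evident map and $D:W\xrightarrow{\sim}W\wr\sym(X)$ the decomposition), is exactly the right insight and does make the presentation route work. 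The paper's approach has the advantage of not relying on Proposition~\ref{prop:pres gamma}, which took some effort; yours, once streamlined, is arguably shorter if one takes the presentation as given.

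Two points need repair, however. First, the ``obstacle'' you flag is not there: once the square commutes, any identity $\psi(u)=(v_1,v_2,v_3)$ in $F\wr\sym(X)$ transports to $D(\Phi(u))=(\Phi(v_1),\Phi(v_2),\Phi(v_3))$ in $W\wr\sym(X)$ verbatim, so if $v_2=v_3=1$ in $F$ then the $A$-parts in coordinates $2,3$ are automatically zero---there is nothing further to check, and the $D_\infty$ lemma plays no role at this stage. Second, and this is a genuine gap, your induction applies $\psi(\tau(r))=(r,1,1)$ to $r=\tau^{n-1}(w_i)$, but the paper only establishes that identity for $r$ among the normal generators of $K_1$; for $n\ge2$ one has $\tau^{n-1}(w_i)\notin K_1$ (indeed $\psi(\tau(w_i))=(w_i,1,1)\ne1$), so the lemma does not apply as stated. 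The clean fix is to drop $\tau$ and induct on the filtration $(K_m)$ directly: the commuting square shows $\Phi(g)=1$ whenever $\psi(g)\in(\ker\Phi)^X$, so $K_m\subseteq\ker\Phi$ implies $K_{m+1}=\psi^{-1}(K_m^X)\subseteq\ker\Phi$. This also collapses your base case to a triviality (since $K_1=\ker\psi$), with no relator-by-relator computation needed.
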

\begin{proof}
The map $\pi$ is obviously an epimorphism.
To complete the proof, we construct an inverse $\psi:\Gamma\to G$.
To this end we set $\psi(g)=(\psi_M(g),g)$ and we still have to define $\psi_M(g)$.
We first set
\begin{align*}
\psi(a)&=\alpha,&\psi(b)&=\beta,&\psi(c)&=\gamma,
\end{align*}
and $\psi_M(1)=0$.
It is readily checked that the formula
\begin{align*}
\psi_M(g)=\sum_{w\in X^n}w*\psi_M(g\at{w})
\end{align*}
holds for all $n\geq0$ and $g\in N$.
By imposing it to hold for all $g$ in $\Gamma$, the formula defines $\psi_M(g)$ for all $g\in\Gamma$ because of the contracting property.
Indeed for an element $g\in\Gamma$, we can choose $n$ big enough so that $g\at{w}$ is in $N$ for all $w\in X^n$.
Then $\psi_M(g)=\sum_{w\in X^n}w*\psi_M(g\at{w})$ is well-defined.
We have thus defined $\psi(g)=(\psi_M(g),g)$ for all $g\in\Gamma$.

Obviously we have $\pi\circ\psi=\id_\Gamma$, and the composition $\psi\circ\pi$ is the identity when restricted to $\psi(N)$.
Since $\psi(N)$ generates $G$, we will show by induction on the length of $g$ that the relation $\psi(\pi(g))=g$ holds for all $g\in G$ (we consider the length of $g$ in the word metric of $G$ with respect to the generating set $\psi(N)$).

Consider $g=(m,\bar g)\in G$ and $h=(n,\bar h)\in\psi(N)$.
Then we have $\pi(gh)=\bar g\bar h$ and we must show $\psi(\bar g\bar h)=gh=(m+\bar g\cdot n,\bar g\bar h)$.
Thus all we need to prove is $\psi_M(\bar g\bar h)=m+\bar g\cdot n$.
Choose $k$ big enough so that $\bar g\at{w}$ and $(\bar g\bar h)\at{w}$ all belong to $N$ for all $w\in X^k$.
Then by definition of $\psi$ we have
\begin{multline*}
\psi_M(\bar g\bar h)=\sum_{w\in X^k}w*\psi_M((\bar g\bar h)\at{w})=\sum_{w\in X^k}w*\psi_M(\bar g\at{w}\bar h\at{w^{\bar g}})=\sum_{w\in X^k}w*(\psi_M(\bar g\at{w})+\psi_M(\bar h\at{w^{\bar g}}))\\=\sum_{w\in X^k}w*\psi_M(\bar g\at{w})+\sum_{w\in X^k}w^{\bar g^{-1}}*\psi_M(\bar h\at{w})=m+\bar g\cdot n.
\end{multline*}
For the third equality we used the fact that the relation $\psi_M(gh)=\psi_M(g)+\psi_M(h)$ holds when $g$, $h$ and $gh$ are all in $N$.
For the last equality we used the relation $\psi_M(h)^{g^{-1}}$, which also holds when $g$, $h$ and $gh$ are all in $N$.
\end{proof}

We now identify $G$ and $\Gamma$ using the above isomorphism.
\begin{prop}\label{prop:branch completion Hanoi}
The closure of $\Gamma$ in $W$ is isomorphic to the branch completion of $\Gamma$.
\end{prop}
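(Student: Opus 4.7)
The plan is to show that the profinite topology that $\Gamma$ inherits from the embedding $\phi : g \mapsto (\psi_M(g), g)$ into the profinite group $W$ coincides with the branch topology; since the closure of a subgroup inside a profinite group is canonically the completion of that subgroup in the induced topology, this immediately identifies the closure of $\Gamma$ in $W$ with $\widetilde{\Gamma}$.

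First I would write down the basis of neighborhoods of $1$ in $W = \varprojlim W_n$, namely the kernels $U_n = \ker(W \to W_n)$, and determine when $\phi(g)$ lies in $U_n$. This happens precisely when $g \in \stab_\Gamma(n)$ (so that the $\aut X^n$-component of $\phi(g)$ is trivial) and the image of $\psi_M(g)$ in $A[X^n]$ vanishes. For such $g$, one has $\psi_M(g) = \sum_{v \in X^n} v * \psi_M(g|_v)$ (this follows from $\psi$ being a homomorphism together with $\psi_M(w * h) = w * \psi_M(h)$), and the description of the connecting map $A[X^{n+1}] \to A[X^n]$ given in the paper implies that the image of $v * m$ in $A[X^n]$ (for $|v| = n$) equals $v \cdot \mathrm{total}(m)$, where $\mathrm{total} : A[[X^\omega]] \to A[X^0] = A$ is the sum-of-coefficients map obtained by iterating the connecting maps all the way down to level $0$.

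The key observation is that $\mathrm{total} \circ \psi_M : \Gamma \to A$ is the abelianization. Indeed, $\mathrm{total}$ is $\aut X^*$-invariant (it simply sums all coordinates), so applying it to the cocycle identity $\psi_M(gh) = \psi_M(g) + g \cdot \psi_M(h)$ yields a homomorphism $\Gamma \to A$; since on the generators $a,b,c$ it evaluates to $\bar a, \bar b, \bar c$, which generate $A = \Gamma/\Gamma'$, it must coincide with the abelianization. Consequently the $A[X^n]$-component of $\phi(g)$ equals $\sum_{v \in X^n} v \cdot \overline{g|_v}$, and this vanishes iff $g|_v \in \Gamma'$ for every $v \in X^n$, i.e., iff $g \in X^n * \Gamma'$. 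Thus $U_n \cap \Gamma = X^n * \Gamma'$.

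Finally, since $\Gamma$ is self-similar, level-transitive, and regular branch with $\Gamma'$ as its (maximal) branching subgroup --- as was already used in the proof of Proposition~\ref{prop:branch kernel hanoi} --- Corollary~\ref{cor:rist cofinal to K} tells us that the filtrations $\{X^n * \Gamma'\}_{n \geq 0}$ and $\{\rist_\Gamma(n)\}_{n \geq 0}$ are cofinal. Therefore the topology induced on $\Gamma$ from $W$ is exactly the branch topology, and the closure of $\Gamma$ in $W$ is $\widetilde{\Gamma}$. The hardest step will be the identification $U_n \cap \Gamma = X^n * \Gamma'$, and in particular the recognition of $\mathrm{total} \circ \psi_M$ as the abelianization; the rest is a formal consequence of the general regular-branch machinery.
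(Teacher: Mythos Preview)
Your proposal is correct and follows essentially the same route as the paper: both arguments identify the induced neighborhood basis $U_n\cap\Gamma$ with $X^n*\Gamma'$ and then invoke the cofinality of $\{X^n*\Gamma'\}$ with $\{\rist_\Gamma(n)\}$. The paper does the first step more tersely, by observing that $\Gamma/(X^n*\Gamma')\simeq(\stab_\Gamma(n)/(X^n*\Gamma'))\rtimes(\Gamma/\stab_\Gamma(n))$ embeds in $A[X^n]\rtimes\aut X^n=W_n$ so that the two quotient maps coincide; your explicit identification of $\mathrm{total}\circ\psi_M$ with the abelianization is exactly what makes that assertion rigorous, and is a nice way to unpack it.
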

\begin{proof}
In the group $\Gamma$, a fundamental system of neighborhoods of $1$ is
$\{X^n*\Gamma'\mid n\geq0\}$ for the branch topology, and
$\{N_n\mid n\geq0\}$ for the subspace topology in $W$.
Here $N_n$ denotes the kernel of the canonical map $W\to W_n$, intersected with $\Gamma$.
We need to show that these two filters are cofinal.

In fact we have the equality $N_n=X^n*\Gamma'$ for all $n\geq0$.
Indeed, notice that we have the isomorphism
$\Gamma/(X^n*\Gamma')\simeq(\stab_\Gamma(n)/(X^n*\Gamma'))\rtimes(\Gamma/\stab_\Gamma(n))$,
and $\stab_\Gamma(n)/(X^n*\Gamma')$ is a subgroup of
$(\Gamma/\Gamma')^{X^n}\simeq A[X^n]$.  Thus
$\Gamma\to\Gamma/(X^n*\Gamma')$ and $\Gamma\to W_n$ are in fact one
and the same map.  Hence their kernels coincide.
\end{proof}

Further, we consider the ring $R$ of continuous functions $W\to\Z/2\Z$.
In particular we define the following elements of $R$.
Consider a subset $S\subseteq\sym(X)$.
We define $\chi_S:W\to\Z/2\Z$ as the composition of $W\to\aut X^*\to\sym(X)$ with the characteristic function $\sym(X)\to\Z/2\Z$ of $S$.
For example if $S=\{(1,2),(2,3),(3,1)\}$ then $\chi_S$ is the homomorphism $[\sigma]:W\to\Z/2\Z$ which gives the signature of the permutation on top of the tree.
We also define the function $[a]:A[[X^\omega]]\to\Z/2\Z$ as the composition $A[[X^\omega]]\to A[X^0]\simeq A$ with the functional $[a]:A\to\Z/2\Z$ which takes the value $1$ on $\bar a$ and $0$ on $\bar b$ and $\bar c$.
We extend this function to $W$ by $[a](m,g)=[a](m)$.
We define similarly $[b]$ and $[c]$.
Next we define the operation $w*\cdot:R\to R$ by $(w*f)(m,g)=f(m\at{w},g\at{w})$ for all $w\in X^*$ (where $m\at{w}$ is the coefficient of $m$ with index $w$ in the canonical decomposition $A[[X^\omega]]\simeq\bigoplus_{w\in X^n}A[[X^\omega]]$).
We extend this definition by linearity on the left factor to obtain an operation $*:(\Z/2\Z)[X^*]\otimes R\to R$.

Now we define some specific elements of $R$.
\begin{align*}
P_1&=\chi_{\sym(X)\setminus\stab(1)}+(2+3)*([b]+[c])=\chi_{\sym(X)\setminus\stab(1)}+2*[b]+2*[c]+3*[b]+3*[c],\\
Q_{1,2}&=\chi_{(1,2)\cdot\stab(1)}+(1+3)*([b]+[c])=\chi_{(1,2)\cdot\stab(1)}+1*[b]+1*[c]+3*[b]+3*[c],\\
Q_{1,3}&=\chi_{(1,3)\cdot\stab(1)}+(2+1)*([b]+[c])=\chi_{(1,3)\cdot\stab(1)}+2*[b]+2*[c]+1*[b]+1*[c],
\end{align*}
and we define similarly $P_2,P_3,Q_{2,1},Q_{2,3},Q_{3,1},Q_{3,2}$.
One can check that $Q_{1,2}+Q_{1,3}=P_1$ (and similarly for $P_2,P_3$).
We define $\Sigma=[\sigma]+[a]+[b]+[c]$.
Then one has $(\varnothing+1+2+3)*\Sigma=(\varnothing+1+2+3)*[\sigma]$ because $(\varnothing+1+2+3)*[a]=[a]$ and similarly for $b$, $c$.

A subset $S\subseteq W$ is called \emph{self-similar} if $s\at{w}\in S$ for
all $s\in S$, $w\in X^*$.  As one does in algebraic geometry, we write
$\V(S)$ for the set of elements $g$ of $W$ so that $P(g)=0$ for all
$P\in S$. Define
\[\gt=\V(X^**\{\Sigma,P_1,P_2,P_3,Q_{1,2},Q_{2,3},Q_{3,1}\}).\]
We prove:
\begin{thm}\label{thm:BC}
  The group $\gt$ is the branch completion of $\Gamma$.
\end{thm}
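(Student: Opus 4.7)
The plan is to combine Proposition~\ref{prop:branch completion Hanoi}, which identifies the branch completion $\gt$ with the closure $\overline{\Gamma}^W$ of $\Gamma$ in $W$, with a direct verification that $\V = \overline{\Gamma}^W$. For the inclusion $\overline{\Gamma}^W \subseteq \V$, since $\V$ is the zero set of a family of continuous $\Z/2\Z$-valued functions on $W$ it is closed, so it suffices to show $\Gamma \subseteq \V$. The set of $f \in R$ vanishing on $\psi(\Gamma)$ is stable under $f \mapsto w*f$: for any $g \in \Gamma$ one has $(w*f)(\psi(g)) = f(\psi_M(g)\at{w}, g\at{w}) = f(\psi(g\at{w}))$, and $g\at{w} \in \Gamma$ by self-similarity. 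Thus it is enough to show that each of $\Sigma, P_1, P_2, P_3, Q_{1,2}, Q_{2,3}, Q_{3,1}$ vanishes on $\psi(\Gamma)$. For $\Sigma = [\sigma] + [a] + [b] + [c]$, both summands are homomorphisms $W \to \Z/2\Z$ (the parity of the top permutation and the total abelianization image of the $A[[X^\omega]]$-part, respectively), and a direct check on $\alpha, \beta, \gamma$ gives $\Sigma(\alpha) = \Sigma(\beta) = \Sigma(\gamma) = 0$. For the $P_i$ and $Q_{i,j}$, which are not homomorphisms, I would verify vanishing on each generator by a direct calculation using the state decompositions $a = (a,1,1)\sigma_{23}$, etc., and then analyze the cocycle defect $f(gh) + f(g) + f(h)$ on $\Gamma \times \Gamma$, showing that it lies in the $\Z/2\Z$-linear span of $w*$-shifts of the seven functions themselves, all of which have already been shown to vanish on $\Gamma$.

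For the reverse inclusion $\V \subseteq \overline{\Gamma}^W$, I would use that $\overline{\Gamma}^W = \varprojlim_n \pi_n(\Gamma)$, so it suffices to show $\pi_n(\V) \subseteq \pi_n(\Gamma)$ inside each finite quotient $W_n = A[X^n] \rtimes \aut X^n$. Since the first inclusion already gives $\pi_n(\Gamma) \subseteq \pi_n(\V)$, this reduces to the equality of cardinalities $|\pi_n(\V)| = |\pi_n(\Gamma)|$. The cardinality of $\pi_n(\Gamma) = \Gamma/(X^n*\Gamma')$ can be computed from Proposition~\ref{prop:pres gamma} together with the description of $\Gamma'/\Gamma''$ obtained in the previous subsection; the cardinality of $\pi_n(\V)$ is a linear-algebra count of the rank of the system of equations $X^{<n} * \{\Sigma, P_i, Q_{i,j}\}$ viewed on $W_n$.

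The main obstacle will be this level-by-level matching of cardinalities. I expect to handle it by induction on $n$, using the endomorphism $\tau$ of Proposition~\ref{prop:pres gamma} to identify $\tau^k(w_i)$-relators with the $1^k*$-shifts of our defining functions, and exploiting the self-similar form of the equations themselves so that each additional level of the tree contributes the same number of new independent relations as it contributes new ``degrees of freedom'' in $\Gamma$. This reduces the infinite match to a base case at small $n$, which can be checked explicitly; passing to the limit then yields $\V = \overline{\Gamma}^W = \gt$.
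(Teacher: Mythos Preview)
Your overall architecture is the same as the paper's: show $\Gamma\subseteq\V$, show $\V$ is closed, show that the images in each $W_n$ coincide, and then invoke Proposition~\ref{prop:branch completion Hanoi}. Where you diverge is in the execution of both halves.

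For the inclusion $\Gamma\subseteq\V$, your plan is to verify the seven functions on the generators and then propagate to products by controlling the ``cocycle defect'' $f(gh)+f(g)+f(h)$. This is where the paper is more efficient: instead of analysing the defect, it first proves that $\V$ is a \emph{subgroup} of $W$. By self-similarity of the defining set of functions, this reduces to the single finite check that the image of $\V(\{\Sigma,P_i,Q_{i,j}\})$ in $W_1$ is a subgroup of $W_1$; once $\V$ is a group, membership of $\alpha,\beta,\gamma$ gives $\Gamma\subseteq\V$ immediately. Your cocycle route is not wrong in principle, but note that the functions $w*[b]$, $w*[c]$ and the coset indicators $\chi_S$ are not homomorphisms on $W$ (the action of $g$ on $m'$ and the twisting $(gh)\at w=g\at w\cdot h\at{w^g}$ spoil additivity), so the defect you have to control is genuinely nontrivial. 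Expressing it as a combination of $w*$-shifts of the seven functions is essentially a disguised proof that $\V$ is multiplicatively closed; the paper's finite check in $W_1$ is the clean way to do this.

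For the equality of images in $W_n$, the paper also matches cardinalities, but organises the count differently from what you sketch. Rather than using the presentation and~$\tau$, it splits along the semidirect product $W_n=A[X^n]\rtimes\aut X^n$: one lemma compares the projections to $\aut X^n$ level by level (showing $|\stab_H(n)/\stab_H(n+1)|=|\stab_\Gamma(n)/\stab_\Gamma(n+1)|$ via the single constraint $(\varnothing+1+2+3)*[\sigma]$ for the upper bound and explicit commutators for the lower bound), and a second lemma compares the kernels $M_n$, $\widetilde M_n\le A[X^n]$ by counting independent linear equations for the upper bound and exhibiting explicit elements (such as $[a,b][b,c]^a$ and $x_a=b^ac$) for the lower bound. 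Your inductive scheme via $\tau$ could be made to work, but the paper's approach has the advantage that the explicit witnesses produced in the lower-bound step are exactly what is reused later to identify the rigid kernel.
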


Note that, for now, it is not even clear that $\gt$ is a group. We
prove Theorem~\ref{thm:BC} in the following lemmata.

\begin{lem}
  The set $\gt$ is a closed self-similar subgroup of $W$.
  Moreover, $\Gamma$ is a subgroup of $\gt$.
\end{lem}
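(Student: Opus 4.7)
The plan is to establish the four assertions in turn: $\gt$ is (i) closed, (ii) self-similar, (iii) a subgroup of $W$, and (iv) contains $\Gamma$. Parts (i) and (ii) are immediate from the definition. By construction, $\gt = \bigcap_{P,w}(w*P)^{-1}(0)$ is the intersection of the closed sets $(w*P)^{-1}(0)$ as $w$ ranges over $X^*$ and $P$ over $\{\Sigma,P_1,P_2,P_3,Q_{1,2},Q_{2,3},Q_{3,1}\}$, each $w*P$ being a continuous map $W\to\Z/2\Z$. For self-similarity, if $g\in\gt$ and $u\in X^*$, then for every $v\in X^*$ and $P$ in the list, $(v*P)(g\at{u}) = P(g\at{uv}) = (uv*P)(g) = 0$, so $g\at{u}\in\gt$; in other words, the defining family $X^**\{\Sigma,P_i,Q_{i,j}\}$ was chosen to be self-similar by design.

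For (iii), I first observe $1\in\gt$ by direct evaluation, since both the top permutation and the $A[[X^\omega]]$-component of $1$ are trivial. The main tool for multiplicativity is the wreath-product identity
\[(gh)\at{w} = (g\at{w})\,(h\at{\sigma_g(w)}) \quad\text{in } W,\]
where $\sigma_g$ is the $\aut X^*$-component of $g$. Now $\Sigma = [\sigma]+[a]+[b]+[c]$ is a group homomorphism $W\to\Z/2\Z$: $[\sigma]$ factors through the top projection $W\to\sym(X)$ composed with the signature character, while $[a]$, $[b]$, $[c]$ factor through the $\aut X^*$-invariant augmentation $A[[X^\omega]]\to A$. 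Therefore $\Sigma((gh)\at{w}) = \Sigma(g\at{w}) + \Sigma(h\at{\sigma_g(w)})$, which vanishes whenever $g,h\in\V(X^**\Sigma)$, so this set is closed under products. Each $P_i$ and $Q_{i,j}$ is \emph{not} a homomorphism of $W$ in isolation, but I will check by case analysis on the top permutation $\sigma_g$ that, restricted to $\V(X^**\Sigma)$, they satisfy the analogous cocycle identity $P((gh)\at{w}) = P(g\at{w}) + P(h\at{\sigma_g(w)})$. The failure of $\chi_{(i,j)\stab(i)}$ to be a homomorphism is exactly compensated by $\sigma_g$ shuffling the index set $\{2,3\}$ (respectively $\{1,3\}$, $\{1,2\}$) appearing in the $k*([b]+[c])$-summands, using the relation $[b]+[c] \equiv [\sigma]+[a] \pmod{\Sigma}$ that holds on $\V(X^**\Sigma)$. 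Closure under inversion is then automatic, since a closed submonoid of a compact group is a subgroup.

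For (iv), since $\Gamma = \langle a,b,c\rangle$ and $\gt$ is a subgroup by (iii), it suffices to show $a,b,c\in\gt$. The set of states of $a$ is contained in $\{1,a\}$, and similarly $\{1,b\}$ and $\{1,c\}$ for $b$ and $c$, so the conditions $(w*P)(a) = P(a\at{w}) = 0$ reduce to verifying $P(1) = P(a) = P(b) = P(c) = 0$ for each defining $P$. These are direct evaluations from the embeddings $a = (1^\omega*\bar a,\sigma_{23})$, $b = (2^\omega*\bar b,\sigma_{13})$, $c = (3^\omega*\bar c,\sigma_{12})$; for instance $\Sigma(a) = [\sigma](\sigma_{23}) + [a](1^\omega*\bar a) = 1+1 = 0$, and the other evaluations are equally mechanical.

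The main obstacle is the cocycle check for $P_i$ and $Q_{i,j}$ in step (iii). Unlike $\Sigma$, these functions are not homomorphisms of $W$, and their product closure on $\V(X^**\Sigma)$ requires tracking, across a case analysis on the six top permutations $\sigma_g$, how the permutation redistributes the three position-labels $\{2,3\},\{1,3\},\{1,2\}$ appearing in the seven defining functions and how the resulting discrepancy is absorbed by the relation $\Sigma=0$ at each affected state.
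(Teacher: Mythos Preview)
Your treatment of closedness, self-similarity, and the inclusion $\Gamma\subseteq\gt$ is fine and matches the paper. The problem is the subgroup check in~(iii).

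The paper's route is shorter: each of $\Sigma,P_i,Q_{i,j}$ factors through the finite quotient $W_1=A^3\rtimes\sym(X)$, so $S=\V(\{\Sigma,P_1,P_2,P_3,Q_{1,2},Q_{2,3},Q_{3,1}\})$ is the preimage of a subset $\bar S\subseteq W_1$. One checks directly (by hand or machine) that $\bar S$ is a subgroup; then self-similarity plus the wreath identity you wrote down give that $\gt$ is a subgroup.

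Your proposed mechanism---that each $P_i,Q_{i,j}$ becomes a homomorphism on $\V(X^**\Sigma)$, the defect being ``absorbed by $\Sigma=0$''---does not work as stated. Writing $g'=g@w$, $h'=h@{w^g}$ with top permutations $\sigma,\tau$, one computes in $\Z/2\Z$
\[
P_1(g'h')+P_1(g')+P_1(h')
=\bigl(\chi(\sigma\tau)+\chi(\sigma)+\chi(\tau)\bigr)
+\bigl((2^\sigma+3^\sigma+2+3)*[b+c]\bigr)(h'),
\]
with $\chi=\chi_{\sym(X)\setminus\stab(1)}$. For $\sigma=(12)$ the second term is $((1+2)*[b+c])(h')$. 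The relations $x*\Sigma=0$ let you rewrite this as $((1+2)*([\sigma]+[a]))(h')$, but that quantity depends on the second-level data of $h'$ and is \emph{not} determined by $\tau$, whereas the $\chi$-defect depends only on $\tau$ (it equals $1$ exactly for $\tau\in\{(13),(123)\}$). So $\Sigma=0$ alone cannot force cancellation. What actually makes it vanish is $Q_{1,3}(h')=0$: since $Q_{1,3}=\chi_{(1,3)\stab(1)}+(1+2)*[b+c]$ and $(1,3)\stab(1)=\{(13),(123)\}$, this relation says precisely $((1+2)*[b+c])(h')=\chi_{(1,3)\stab(1)}(\tau)$, matching the $\chi$-defect. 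The other cases of $\sigma$ similarly require the other $Q_{i,j}$'s.

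In short, closure under products needs the \emph{entire} system $\{\Sigma,P_i,Q_{i,j}\}$ simultaneously, not just $\Sigma$; the seven functions are coupled under the reindexing by~$\sigma$. Once you feed all of them into your case analysis you are doing exactly the paper's finite check in $W_1$, just organized equation-by-equation rather than element-by-element.
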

\begin{proof}
Obviously, $\gt$ is self-similar and closed.
We need to prove it is a group.
Using the self-similarity, it is enough to prove that the projection of $\V(\{\Sigma,P_1,P_2,P_3,Q_{1,2},Q_{2,3},Q_{3,1}\})$ into $W_1$ is a subgroup of $W_1$.
This is can be done by hand or using a computer, but is in any case straightforward.

Now $\Gamma$ is clearly a subgroup of $\gt$ because $a$, $b$ and $c$ are in $\gt$.
\end{proof}
\begin{rmk}
One can check the relations $P_1+P_2+P_3=Q_{1,2}+Q_{2,1}=Q_{2,3}+Q_{3,2}=Q_{3,1}+Q_{1,3}$.
Thus one has $\widetilde{\Gamma}=\V(X^**\{\Sigma,Q_{1,2},Q_{1,3},Q_{2,1},Q_{2,3}\})$, and one can check that $X^**\{\Sigma,Q_{1,2},Q_{1,3},Q_{2,1},Q_{2,3}\}$ is a minimal generating set for the ideal $I\subset R$ of functions vanishing on $\widetilde{\Gamma}$.
\end{rmk}
\begin{lem}
Let $H$ be the projection of $\widetilde{\Gamma}$ to $\aut X^*$.
The group $\stab_H(n)/\stab_H(n+1)$ has $6$ elements when $n=0$, and $2^{2\cdot3^{n-1}}\cdot3^{3^n}$ elements for all $n\geq1$.
Moreover, the same holds for $\stab_\Gamma(n)/\stab_\Gamma(n+1)$.
\end{lem}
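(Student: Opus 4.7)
The plan proceeds in three stages. First, I would observe that $H=\pi(\gt)$ is the continuous image of a compact group, hence closed in $\aut X^*$, and it contains $\Gamma$ as a dense subgroup by Proposition~\ref{prop:branch completion Hanoi}. Therefore $H$ equals the congruence completion $\gb$, and the finite quotients $\Gamma/\stab_\Gamma(n)$ and $H/\stab_H(n)$ coincide for every $n$. This already gives the ``moreover'' assertion, and reduces the problem to computing the index $|H/\stab_H(n+1)|/|H/\stab_H(n)|$. The base case $n=0$ is immediate: the generators $a,b,c$ project onto $\sigma_{23},\sigma_{13},\sigma_{12}$, which generate all of $\sym(X)=S_3$, so $|H/\stab_H(1)|=6$.

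For $n\geq 1$ I would study $\stab_H(n)/\stab_H(n+1)$ as a subgroup of $\sym(X)^{X^n}$ via the embedding $g\mapsto(\sigma_v)_{v\in X^n}$, where $\sigma_v$ is the top permutation of the state $g\at v$. By Theorem~\ref{thm:BC}, a tuple $(\sigma_v)_{v\in X^n}$ lies in the image iff there exist an extension of $g$ to levels $>n$ and an abelian coefficient $m\in A[[X^\omega]]$ such that every translate $w*\{\Sigma,P_1,P_2,P_3,Q_{1,2},Q_{2,3},Q_{3,1}\}$ vanishes on $(m,g)$. Because each such function is linear over $\Z/2$ in the abelian coordinates $m_u^j$ with right-hand side determined by $[\sigma](\sigma_v)$ and $\chi_S(\sigma_v)$, the realizability of $(\sigma_v)$ becomes consistency of a sparse tree-structured linear system. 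One then carries out the elimination of the $m_u^j$ level by level using the five minimal generators $\Sigma,Q_{1,2},Q_{1,3},Q_{2,1},Q_{2,3}$: equations at vertices of level $<n-1$ merely enforce that certain pair-coordinates be equal along sibling sets, equations at vertices of level $\geq n+1$ are absorbed by the free choice of the deeper permutations, while equations sitting at the interface between level $n-1$ and level $n$ leave, for each $w\in X^{n-1}$, exactly one mod-$2$ relation purely among the triple $(\sigma_{w1},\sigma_{w2},\sigma_{w3})$.

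These $3^{n-1}$ surviving relations are pairwise independent because they concern disjoint triples, so the image of $\stab_H(n)/\stab_H(n+1)$ in $\sym(X)^{X^n}\cong S_3^{3^n}$ has index $2^{3^{n-1}}$, yielding $|\stab_H(n)/\stab_H(n+1)|=6^{3^n}/2^{3^{n-1}}=2^{2\cdot 3^{n-1}}\cdot 3^{3^n}$, as claimed. The main obstacle in this plan is isolating the correct surviving mod-$2$ constraint at each level-$(n-1)$ vertex: although the elimination is linear-algebraic and tree-structured, one must verify carefully that exactly one independent relation is produced there and that the deeper levels impose no additional obstruction, using crucially the identities $Q_{1,2}+Q_{1,3}=P_1$ and $P_1+P_2+P_3=Q_{1,2}+Q_{2,1}=Q_{2,3}+Q_{3,2}=Q_{3,1}+Q_{1,3}$ noted in the remark after Theorem~\ref{thm:BC}.
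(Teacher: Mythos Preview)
Your argument is circular in two places. First, you claim that $\Gamma$ is dense in $H=\pi(\gt)$ by invoking Proposition~\ref{prop:branch completion Hanoi}; but that proposition concerns the closure of $\Gamma$ in $W$, not the variety $\gt$. At this point in the paper we only know $\Gamma\subseteq\gt$, and the equality of $\gt$ with the closure of $\Gamma$ is exactly Theorem~\ref{thm:BC}, whose proof relies on the present lemma (and on its corollary $H=\gb$, which you are effectively assuming in your first paragraph). Second, you invoke Theorem~\ref{thm:BC} again in your realizability analysis. Both appeals must be removed.

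The paper avoids this by a sandwich argument that treats the two assertions simultaneously rather than deducing one from the other. For the upper bound on $|\stab_H(n)/\stab_H(n+1)|$ it does not eliminate the $A$-coordinates at all: it observes that $(\varnothing+1+2+3)*\Sigma=(\varnothing+1+2+3)*[\sigma]$ is a function of the tree component alone, so its vanishing on $\gt$ forces, for each $w\in X^{n-1}$, the parity relation $\sum_{x\in X}[\sigma](h\at{wx})=0$ on any $h\in\stab_H(n)$. That immediately gives index at least $2^{3^{n-1}}$ in $\sym(X)^{X^n}$, with no linear elimination needed. For the lower bound it works inside $\Gamma$ itself, exhibiting explicit commutators that produce a copy of $C_3^{X^n}$ and a copy of $V^{X^{n-1}}$ inside $\stab_{\Gamma'}(n)/\stab_{\Gamma'}(n+1)$, using the branching $X^n*\Gamma'\le\Gamma'$. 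Since $\Gamma\le H$, the lower bound for $\Gamma$ meets the upper bound for $H$, proving both statements at once. Your elimination idea could in principle replace the upper-bound step (working directly from the defining equations of $\gt$, not from Theorem~\ref{thm:BC}), but you would still need an independent lower bound for $\Gamma$, and that is precisely what your first paragraph tried to sidestep.
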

\begin{proof}
We first prove that $\stab_H(n)/\stab_H(n+1)$ has at most $2^{2\cdot3^{n-1}}\cdot3^{3^n}$ elements for all $n\geq1$.
By definition of $\widetilde{\Gamma}$, the function $(\varnothing+1+2+3)*\Sigma=(\varnothing+1+2+3)*[\sigma]$ must vanish on $\widetilde{\Gamma}$.
We set $R=(1+2+3)*[\sigma]$.
For every element $g\in\widetilde{\Gamma}$ we must have $(w*([\sigma]+R))(g)=0$ for all $w\in X^*$.
Thus in particular for all $n\geq1$ and for every element $h$ of $\stab_H(n)$ we have $(w*R)(h)=0$ for all $w\in X^{n-1}$.
This implies that $\stab_H(n)/\stab_H(n+1)$ is a subgroup of index at least $2^{3^{n-1}}$ in $\sym(X)^{X^n}$, whence the upper bound.

Now $H$ contains $\Gamma$, therefore it is enough to prove that $\stab_\Gamma(n)/\stab_\Gamma(n+1)$ has at least the desired number of elements.
Obviously the group $\Gamma/\stab_\Gamma(1)$ is isomorphic to $\sym(X)$ and thus has 6 elements.
We now consider $n\geq1$ and show that $\stab_{\Gamma'}(n)/\stab_{\Gamma'}(n+1)$ has at least $2^{2\cdot3^{n-1}}\cdot3^{3^n}$ elements.

Notice that $\Gamma'/\stab_{\Gamma'}(1)$ is isomorphic to $C_3$,
generated by the image of $[a,b]=(ab,a,b)\sigma_{123}$ for example.
Since $\Gamma'$ contains $X^n*\Gamma'$ for all $n\geq0$, this also
proves that $\stab_{\Gamma'}(n)/\stab_{\Gamma'}(n+1)$ contains a
subgroup isomorphic to $C_3^{X^n}$.  Next, we claim
$\stab_{\Gamma'}(1)/\stab_{\Gamma'}(2)$ contains a subgroup isomorphic
to a Klein four-group $V$.  Indeed, a straightforward computation
shows
\begin{align*}
[a,b][a,c]&=((ac,1,b)\sigma_{13},(1,1,1),(1,bc,a)\sigma_{23}),\\
[b,c][b,a]&=((b,1,ca)\sigma_{13},(c,ba,1)\sigma_{12},(1,1,1)),
\end{align*}
These two elements generate $V$ in the quotient
$\stab_{\Gamma'}(1)/\stab_{\Gamma'}(2)$.  Using the branching again,
this proves that $\stab_{\Gamma'}(n)/\stab_{\Gamma'}(n+1)$ contains a
subgroup isomorphic to $V^{X^{n-1}}$ for all $n\geq1$, which
terminates the proof.
\end{proof}

\begin{cor}
The closure $\gb$ of $\Gamma$ in the group $\aut X^*$ is equal to $H$.
\end{cor}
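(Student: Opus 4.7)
The plan is to show both inclusions $\gb\subseteq H$ and $H\subseteq\gb$, where the first is easy and the second is a density argument based on the previous lemma.

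For $\gb\subseteq H$, I would first argue that $H$ is a closed subgroup of $\aut X^*$: the branch completion $\gt$ is (by the preceding lemma) a closed, hence compact, subgroup of the profinite group $W$, so its image $H$ under the continuous projection $W\to\aut X^*$ is compact and therefore closed. Since $\Gamma$ sits inside $\gt$ via the identification of the earlier proposition, its projection $\Gamma\subseteq\aut X^*$ is contained in $H$, and passing to closures in $\aut X^*$ gives $\gb\subseteq H$.

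For the reverse inclusion, the main input is the equality of the orders of $\stab_H(n)/\stab_H(n+1)$ and $\stab_\Gamma(n)/\stab_\Gamma(n+1)$ proved in the previous lemma. I would use the inclusion $\Gamma\leq H$ to get, for each $n\geq0$, an injection
\begin{align*}
\Gamma/\stab_\Gamma(n)\hookrightarrow H/\stab_H(n),
\end{align*}
since $\Gamma\cap\stab_H(n)=\stab_\Gamma(n)$. Iterating the short exact sequences $1\to\stab(n)/\stab(n+1)\to \cdot/\stab(n+1)\to\cdot/\stab(n)\to1$ with the lemma shows that both finite groups have the same order, so the injection is an isomorphism. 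In other words, $\Gamma$ surjects onto every finite quotient $H/\stab_H(n)$, which means $\Gamma$ is dense in $H$ for the topology induced from $\aut X^*$. Combined with the fact that $H$ is closed, this gives $H=\overline\Gamma=\gb$.

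The only subtle point is confirming that the topology on $H$ inherited from $\aut X^*$ really coincides with the profinite topology given by the level stabilizers $\stab_H(n)$, but this is automatic since $H$ is a closed subgroup of $\aut X^*$ and the $\stab_H(n)=H\cap\stab(n)$ form a fundamental system of neighborhoods of the identity in $H$ for the subspace topology. No further computation is needed beyond the level-by-level count already established.
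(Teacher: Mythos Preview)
Your argument is correct and follows essentially the same route as the paper: use the previous lemma's level-by-level count to conclude that $\Gamma/\stab_\Gamma(n)\hookrightarrow H/\stab_H(n)$ is an isomorphism for every $n$, and combine this with closedness of $H$ in $\aut X^*$. One minor point: at this stage of the paper, $\gt$ is only the variety $\V(X^**\{\Sigma,P_1,\dots\})$, not yet shown to be the branch completion (that is Theorem~\ref{thm:BC}, proved later), so you should not call it ``the branch completion''; however your argument only uses that $\gt$ is closed in $W$, which was established two lemmas back, so the reasoning stands.
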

\begin{proof}
$\Gamma$ is a subgroup of $H$ and for each $n$, the quotients $H/\stab_H(n)$ and $\Gamma/\stab_\Gamma(n)$ have the same order, hence they are equal.
Since $H$ is closed in $\aut X^*$, the proposition follows.
\end{proof}

Let $\widetilde{\Gamma}_n$ be the projection of $\widetilde{\Gamma}$ to $W_n$ and let $\widetilde{M}_n$ be the kernel of the projection $\widetilde{\Gamma}_n\to\aut X^n$.
In the same way we write $\Gamma_n$ for the projection of $\Gamma$ to $W_n$, and we let $M_n$ be the kernel of the projection $\Gamma_n\to\aut X^n$.
The groups $\widetilde{M}_n$ and $M_n$ are subgroups of $A[X^n]$, and thus are elementary Abelian $2$-groups of rank at most $3^{n+1}$.
\begin{lem}\label{lem:Hanoi branch completion}
\begin{enumerate}
\item
The groups $\widetilde{M}_n$ and $M_n$ have rank $3$ when $n=0$, and rank $2\cdot3^{n-1}+2$ for all $n\geq1$.
\item
The intersection of $M_n$ with $(\Gamma_n)'$ has rank $0$ when $n=0$, and rank $2\cdot3^{n-1}$ for all $n\geq1$.
\end{enumerate}
\end{lem}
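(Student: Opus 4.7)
By Proposition~\ref{prop:branch completion Hanoi}, $\Gamma$ is dense in $\gt$, so the projections $\Gamma,\gt\to W_n$ have the same image $\Gamma_n$ and in particular $\widetilde M_n=M_n$. The task reduces to computing $M_n$, the image of $\stab_\Gamma(n)$ in $A[X^n]$ under the state-wise abelianization $g\mapsto\sum_{v\in X^n}v*(g\at{v})^{\mathrm{ab}}$. For $n=0$ the map $\Gamma\to W_0=A$ is the abelianization, so $M_0=A$ of rank $3$; since $\Gamma_0=A$ is Abelian, $M_0\cap(\Gamma_0)'=0$.

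For $n=1$ I treat the bound directly. The five root-level relations from Theorem~\ref{thm:BC}, namely $\Sigma$ together with $Q_{1,2},Q_{1,3},Q_{2,1},Q_{2,3}$ (the two remaining $Q$'s being redundant by the identities noted after the theorem), are linearly independent by inspection of the explicit $5\times 9$ coefficient matrix, so they cut $A[X]\simeq A^3$ from dimension $9$ down to $4$. Matching elements: $(abc)^2=(acb,1,bac)$ maps to $(u,0,u)$ with $u=\bar a+\bar b+\bar c$; its two variants under cyclic permutation of $\{a,b,c\}$ give $(u,u,0)$ and $(0,u,u)$; and $(ab)^3=(ba,ab,ab)$ maps to $(\bar a+\bar b,\bar a+\bar b,\bar a+\bar b)$—a determinant check shows these four are linearly independent. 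Hence $\mathrm{rank}(M_1)=4=2\cdot 3^0+2$. One also notes that the total-mass map $M_1\to A$ is surjective onto the index-$2$ ``even-weight'' subgroup $E=\{0,\bar a+\bar b,\bar b+\bar c,\bar a+\bar c\}\leq A$.

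For $n\geq 2$ I induct via the self-similar decomposition: every $g\in\stab_\Gamma(n)$ has trivial top permutation and decomposes as $(g_1,g_2,g_3)$ with $g_i\in\stab_\Gamma(n-1)$, giving an injective map $\Phi\colon M_n\hookrightarrow M_{n-1}^3$. The image of $\Phi$ consists precisely of those $(m_1,m_2,m_3)\in M_{n-1}^3$ whose totals $(m_1^{(0)},m_2^{(0)},m_3^{(0)})\in E^3$ satisfy the four root-level $Q$-relations (the $\Sigma$-relation is automatic since $[a+b+c]$ vanishes on $E$): indeed, for any such triple any lift to $(g_1,g_2,g_3)\in\stab_\Gamma(n-1)^3\subseteq W$ satisfies every polynomial translate ($w*P$ for $w\neq\varnothing$ vanishes because $g_i\in\Gamma\subseteq\gt$, and the root relations by hypothesis), hence lies in $\gt$, and density of $\Gamma$ in $\gt$ produces an element of $\stab_\Gamma(n)$ with the prescribed projection. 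A direct calculation in the basis $\{\bar a+\bar b,\bar b+\bar c\}$ of $E$ shows that the four $Q$-relations cut $E^3$ (dimension $6$) down to the diagonal, a $2$-dimensional subspace; combined with the inductive surjectivity of $M_{n-1}^3\to E^3$, this yields the recursion $\mathrm{rank}(M_n)=3\,\mathrm{rank}(M_{n-1})-4$, whose solution with base $\mathrm{rank}(M_1)=4$ is $\mathrm{rank}(M_n)=2\cdot 3^{n-1}+2$. The main obstacle is the lifting step—verifying that every compatible triple truly arises from some $g\in M_n$—which is precisely where density of $\Gamma$ in $\gt$ combined with the self-similar vanishing of non-root polynomial translates plays a crucial role.

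For part (ii), the inclusion $X^n*\Gamma'\subseteq\Gamma'$ (regular branching) gives $\Gamma_n^{\mathrm{ab}}=\Gamma/\Gamma'=A$, so the kernel of the composition $M_n\hookrightarrow\Gamma_n\twoheadrightarrow A$ is exactly $M_n\cap(\Gamma_n)'$. The image of $M_n$ in $A$ coincides with the image of the total-mass map $M_n\to A$; taking diagonal lifts $(m,m,m)$ with $m\in M_{n-1}$ (which automatically satisfy all four $Q$-relations and have total $m^{(0)}$) together with the inductive surjectivity of $M_{n-1}\to E$ propagates surjectivity of the totals map to level $n$, so this image is exactly $E$, of rank $2$. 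Subtracting from (i) yields $\mathrm{rank}(M_n\cap(\Gamma_n)')=(2\cdot 3^{n-1}+2)-2=2\cdot 3^{n-1}$ for $n\geq 1$.
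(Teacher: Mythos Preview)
Your argument is circular. In the paper's logical flow, the symbol $\gt$ at this point denotes the zero locus $\V(X^**\{\Sigma,P_i,Q_{i,j}\})$, and the statement that $\Gamma$ is dense in $\gt$ is the \emph{corollary} of this very lemma, which in turn is used to prove Theorem~\ref{thm:BC}. Proposition~\ref{prop:branch completion Hanoi} only identifies the branch completion with the closure of $\Gamma$ in $W$; it does not say that this closure equals $\V(\ldots)$. So you cannot open by concluding $\widetilde M_n=M_n$, and more seriously you cannot invoke density in the lifting step for $n\ge2$: given a compatible triple $(m_1,m_2,m_3)$, you lift to $(g_1,g_2,g_3)\in\gt$ by checking the polynomial relations, but the passage from $\gt$ to an element of $\stab_\Gamma(n)$ with the same image in $W_n$ is exactly what is not yet available. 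Your recursion therefore computes the rank of $\widetilde M_n$ (where the lifting is legitimate because $\gt$ is closed and self-similar), but says nothing about $M_n$ itself.

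The paper avoids this by a sandwich argument that never appeals to density. The upper bound on $\widetilde M_n$ comes from the count of independent linear relations $w*Q_{i,j}$ ($|w|\le n-1$) and $w*\Sigma$ ($|w|=n-1$), giving rank $\le 2\cdot3^{n-1}+2$. The matching lower bound on $M_n$ is obtained by exhibiting explicit elements of $\Gamma$: the three commutator products $[a,b][b,c]^a$, $[b,c][c,a]^b$, $[c,a][a,b]^c$ produce a Klein four-group in $M_1\cap\Gamma_1'$, propagated via the branching $X^{n-1}*\Gamma'\le\Gamma'$ to a subgroup $V^{X^{n-1}}$ of rank $2\cdot3^{n-1}$; and the elements $x_a=b^ac$, $x_b=c^ba$, $x_c=a^cb$ satisfy $x_s\equiv(x_s,x_s,x_s)\pmod{\Gamma'}$, so the diagonal products $\prod_{w\in X^n}w*x_s$ land in $M_n\setminus(M_n\cap\Gamma_n')$ and supply the two extra dimensions. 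Since $M_n\subseteq\widetilde M_n$, the squeeze gives equality and both rank statements simultaneously. Your base case $n=1$ is essentially this, but for $n\ge2$ you must replace the density-based lifting by explicit elements of $\Gamma$ as the paper does.
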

\begin{proof}
We first prove that $2\cdot3^{n-1}+2$ is an upper bound for the rank of $\widetilde{M}_n$ with $n\geq1$.
The functions in $\{w*R\mid w\in X^*,|w|\leq n-1,R\in\{Q_{1,2},Q_{1,3},Q_{2,1},Q_{2,3}\}\}$ and $\{w*\Sigma\mid w\in X^{n-1}\}$ give $2(3^n-1)+3^{n-1}$ homogeneous linear equations on $\ker(W_n\to\aut X^n)\simeq A^{X^n}$.
Moreover, it is straightforward to check that they are all linearly independent.
Thus $\widetilde{M}_n$ has rank at most $3^{n+1}-(2(3^n-1)+3^{n-1})=2\cdot3^{n-1}+2$, as we claimed.

Since $\Gamma$ is a subgroup of $\gt$, we have the inclusion $M_n\subseteq\widetilde{M}_n$.
Obviously $M_0\simeq\Gamma_0$ is isomorphic to $A$, which is of rank $3$.
Also, $\Gamma_0'$ is the trivial group.

We now consider $n\geq1$.
On the elements of the group $M_n\cap\Gamma_n'$, the functions $[a]$, $[b]$ and $[c]$ must vanish.
This gives only two more homogeneous linear equations on $M_n$, because $[a]+[b]+[c]=\Sigma$ when restricted to the group $M_n$.
This shows that $M_n\cap\Gamma_n'$ has rank at most $2\cdot3^{n-1}$.
Consider the elements
\begin{align*}
[a,b][b,c]^a&=(1,abc,bac),&
[b,c][c,a]^b&=(cba,1,bca),&
[c,a][a,b]^c&=(cab,acb,1).
\end{align*}
Their images in $M_1\cap\Gamma_1'$ are the non-trivial elements of a
Klein four-group $V$.  Since $\Gamma'$ contains $X^n*\Gamma'$ for all
$n\geq0$, we see that $M_n\cap\Gamma_n'$ contains $V^{X^{n-1}}$, which
has rank $2\cdot3^{n-1}$.

Finally, define
\begin{align*}
x_a&=b^ac=(a,a,bc),&x_b&=c^ba=(ca,b,b),&x_c&=a^cb=(c,ab,c).
\end{align*}
The images of these element in $M_1$ generate a Klein four-group $V$.
But none of these elements belong to $\Gamma'$, and therefore this group has trivial intersection with $M_1\cap\Gamma_1'$.
Moreover, we have
\begin{align*}
x_ax_bx_c\equiv x_a^2\equiv x_b^2\equiv x_c^2\equiv1\pmod{\Gamma'},
\end{align*}
and
\begin{align*}
x_a^{[c,a]}\equiv(cb,a,a)\equiv(x_a,a,a)\pmod{\Gamma'}.
\end{align*}
Thus we have the relation
\begin{align}\label{eq:Hanoi rigid kernel}
x_c\equiv x_ax_b\equiv(x_ax_b,ab,ab)\equiv(x_c,x_c,x_c)\pmod{\Gamma'},
\end{align}
and similarly $x_a\equiv(x_a,x_a,x_a)$ and $x_b\equiv(x_b,x_b,x_b)$.
We conclude that $M_n$ contains a Klein group generated by the image of $\prod_{w\in X^n}w*x_s$, with $s\in\{a,b,c\}$.
And this group has trivial intersection with $M_n\cap\Gamma_n'$.
\end{proof}

\begin{cor}
The group $\Gamma$ is dense in $\gt$.
\end{cor}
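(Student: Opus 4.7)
The plan is to show that at every finite level the image $\Gamma_n$ of $\Gamma$ in $W_n$ already fills the image $\widetilde{\Gamma}_n$. Since $\gt$ is a closed subgroup of $W=\varprojlim W_n$, the equalities $\Gamma_n=\widetilde{\Gamma}_n$ for every $n\ge0$ force $\gt$ to coincide with the closure of $\Gamma$ in $W$, giving density (and, combined with Proposition~\ref{prop:branch completion Hanoi}, finishing Theorem~\ref{thm:BC}).

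To compare the two groups levelwise, I would use the short exact sequences
\begin{align*}
1\to M_n\to\Gamma_n\to\Gamma/\stab_\Gamma(n)\to1,\qquad 1\to\widetilde{M}_n\to\widetilde{\Gamma}_n\to H/\stab_H(n)\to1,
\end{align*}
obtained by projecting to $\aut X^n$. The preceding corollary identifying $\gb$ with $H$ implies $\Gamma/\stab_\Gamma(n)=H/\stab_H(n)$, so the right-hand quotients agree. The inclusion $\Gamma\le\gt$ gives $M_n\le\widetilde{M}_n$; both are elementary Abelian $2$-groups, and Lemma~\ref{lem:Hanoi branch completion} shows they have the same rank. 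Hence $M_n=\widetilde{M}_n$, and a comparison of orders in the exact sequences forces $\Gamma_n=\widetilde{\Gamma}_n$.

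Passing to the inverse limit, $\gt=\varprojlim\widetilde{\Gamma}_n=\varprojlim\Gamma_n$ is the closure of $\Gamma$ in $W$, i.e.\ $\Gamma$ is dense in $\gt$. There is no real obstacle here: all numerical input has already been produced by Lemma~\ref{lem:Hanoi branch completion} and the corollary identifying $\gb$ with $H$, and the remaining argument is just a short piece of bookkeeping together with the compatibility of the projections $W\to W_n$ with the inverse system.
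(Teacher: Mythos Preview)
Your proposal is correct and follows the same approach as the paper: both argue that $\Gamma_n=\widetilde{\Gamma}_n$ for every $n$ by comparing orders, then conclude via the inverse limit. The paper's proof is terser, simply asserting that ``$\gt_n$ and $\Gamma_n$ have the same order''; you make explicit the bookkeeping behind this, via the two short exact sequences together with the rank equality $M_n=\widetilde{M}_n$ from Lemma~\ref{lem:Hanoi branch completion} and the equality $\Gamma/\stab_\Gamma(n)=H/\stab_H(n)$ from the preceding corollary.
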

\begin{proof}
$\Gamma$ is a subgroup of $\gt$ and for each $n$, the quotients $\gt_n$ and $\Gamma_n$ have the same order, hence they are equal.
Since $\gt$ is the inverse limit of its quotients $\gt_n$, the proposition follows.
\end{proof}

\begin{proof}[Proof of Theorem~\ref{thm:BC}]
  The group $\Gamma$ embeds densely in $\gt$, which is a closed
  subgroup of $W$.  Proposition~\ref{prop:branch completion Hanoi}
  concludes.
\end{proof}

\begin{prop}
  The branch kernel $\ker(\gt\to\gb)$ is a Klein group of order 4.
\end{prop}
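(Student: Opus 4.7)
The plan is to compute $\ker(\gt\to\gb)$ as $\varprojlim_n\widetilde{M}_n$, where the connecting maps are induced by $W_{n+1}\to W_n$. By Lemma~\ref{lem:Hanoi branch completion} one has $\widetilde{M}_n=M_n$, and I would first establish a direct-sum decomposition $M_n=(M_n\cap\Gamma_n')\oplus K_n$ for $n\geq1$: the inner summand equals $V^{X^{n-1}}$ by the rank equality in that lemma, while $K_n$ is the Klein four-group generated by the images of $y_s^{(n)}:=\prod_{w\in X^n}w*x_s$ for $s\in\{a,b,c\}$, subject to the relation $y_a^{(n)}y_b^{(n)}y_c^{(n)}=1$ that is forced by $x_ax_bx_c\in\Gamma'$. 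The two summands are disjoint because the abelianization map $M_n\to A=\Gamma/\Gamma'$ (which, by the state-level abelianization formula, amounts to summing coordinates in $M_n\subseteq A^{X^n}$) sends $K_n$ isomorphically onto the rank-two subgroup of $A$ spanned by $\bar a+\bar b,\bar b+\bar c,\bar c+\bar a$, whereas $M_n\cap\Gamma_n'$ is contained in its kernel.

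Next I would analyze the connecting map $\pi\colon M_{n+1}\to M_n$ on each summand. On the inner piece, the identification $M_{n+1}\cap\Gamma_{n+1}'=V^{X^n}$ expresses every element as $\prod_{w\in X^n}w*v_w$ with $v_w\in V\subseteq\Gamma'$, so the representatives lie in $X^n*\Gamma'$ and project to zero in $M_n=\stab_\Gamma(n)/(X^n*\Gamma')$. On the Klein piece, the state of $y_s^{(n+1)}$ at any vertex $u\in X^n$ equals $(x_s,x_s,x_s)$, and equation~\eqref{eq:Hanoi rigid kernel} asserts $x_s\equiv(x_s,x_s,x_s)\pmod{\Gamma'}$; hence $\pi(y_s^{(n+1)})$ and $y_s^{(n)}$ have the same image in $A^{X^n}$, so $\pi$ restricts to an isomorphism $K_{n+1}\to K_n$.

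Combining these two observations, the image of $\pi$ is exactly $K_n$ for every $n\geq1$, so the inverse system stabilizes and $\varprojlim_nM_n\cong\varprojlim_nK_n$, which is itself a Klein four-group because each $K_n$ is Klein four and all the connecting maps between them are isomorphisms. The main obstacle is the identification $M_{n+1}\cap\Gamma_{n+1}'=V^{X^n}$ together with the observation that representatives of this subgroup lie in $X^n*\Gamma'$; once these are granted, the remaining verifications are explicit state computations relying on equation~\eqref{eq:Hanoi rigid kernel} and the rank counts of the preceding lemma.
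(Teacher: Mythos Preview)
Your argument is correct and follows essentially the same route as the paper's own proof: both identify the rigid kernel by looking at the finite quotients $M_m=\stab_\Gamma(m)/(X^m*\Gamma')$ inside $A[X^m]$, use the rank information from Lemma~\ref{lem:Hanoi branch completion} to split $M_m$ into the ``inner'' piece $V^{X^{m-1}}$ and the diagonal Klein four-group $K_m$ generated by the $\prod_{w\in X^m}w*x_s$, and invoke equation~\eqref{eq:Hanoi rigid kernel} to see that only the diagonal Klein piece survives. The paper phrases the computation as first taking, for each $m$, the intersection $\bigcap_{n}\stab_\Gamma(n)(X^m*\Gamma')/(X^m*\Gamma')$ (which it asserts equals $K_m$) and then passing to the inverse limit, whereas you compute $\varprojlim_m M_m$ directly by analysing the connecting map $M_{m+1}\to M_m$; these are equivalent reformulations, and your version in fact spells out the step that the paper leaves implicit.
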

\begin{proof}
Reading carefully the proof of Lemma~\ref{lem:Hanoi branch completion}, one can see that Equation~\eqref{eq:Hanoi rigid kernel} and the corresponding ones for $x_a$ and $x_b$ are the key for the computation of the inverse limit
\begin{align*}
\varprojlim_{m\geq0}\left(\bigcap_{n\geq0}\stab_\Gamma(n)(X^m*\Gamma')/(X^m*\Gamma')\right)=\varprojlim_{n\geq0}\widetilde{\stab_\Gamma(n)}.
\end{align*}
Indeed, write $\bar x_a=\bar b\bar c\in A$, and similarly $\bar x_b=\bar a\bar c$ and $\bar x_c=\bar a\bar b$.
Then $\bigcap_{n\geq0}\stab_\Gamma(n)(X^m*\Gamma')/(X^m*\Gamma')$ can be seen as a subgroup of $A[X^n]$, namely the Klein group whose non-trivial elements are $X^m*\bar x_a$, $X^m*\bar x_b$ and $X^m*\bar x_c$ (with the notation $X^m*s=\sum_{w\in X^m}w*s$).
The map $X^m\to X^k$ sends $X^m*s\to X^k*s$ for all $s\in A$ and $m\geq k$, and therefore the rigid kernel of $\Gamma$ is the subgroup of $A[[X^\omega]]$ generated by $X^\omega*\bar x_a$, $X^\omega*\bar x_b$ and $X^\omega*\bar x_c$: a Klein group of order $4$.
\end{proof}

\subsubsection*{The congruence kernel of $\Gamma$}
We put together the results of the last two sections to prove just a
bit more than Theorem~\ref{thm:HCK}:

\begin{thm}\label{thm:congruence kernel Hanoi}
The congruence kernel of $\Gamma$ is an extension of a Klein group $V$ by $\Zh^3[[X^\omega]]$.
The action of $V$ is diagonal.
Each non-trivial element of $V$ acts as a half-turn along a coordinate axis on $\Zh^3$.
\end{thm}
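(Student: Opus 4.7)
The plan is to combine the branch and rigid kernel computations from the preceding subsections and then pin down the resulting $V$-action by a direct metabelian commutator calculation. By definition of the three completions, there is a tautological short exact sequence
\begin{equation*}
1 \longrightarrow \ker(\gh\to\gt) \longrightarrow \ker(\gh\to\gb) \longrightarrow \ker(\gt\to\gb) \longrightarrow 1.
\end{equation*}
Proposition~\ref{prop:branch kernel hanoi} identifies the left-hand term with $\Zh^3[[X^\omega]]$, and the rigid-kernel proposition above identifies the right-hand term with a Klein four-group $V$. This establishes the extension structure.

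Since $\ker(\gh\to\gt)$ is Abelian and normal in $\gh$, conjugation endows it with the structure of a $\gt$-module, and the $V$-action is obtained by restriction to $V=\ker(\gt\to\gb)\le\gt$. Diagonality is immediate: Theorem~\ref{thm:description branch kernel} writes the branch kernel as $\widehat\Z[[\gt]]\widehat\otimes_{\widehat\Z[[\gt_w]]}A$ with $A=\Zh^3$, and the action of $\gt$ on $\gt/\gt_w\simeq X^\omega$ factors through $\gb$. Since $V$ lies in the kernel of $\gt\to\gb$, it stabilizes every boundary point, so the $V$-action commutes with translations by $X^\omega$ and reduces to a single fibrewise action on every copy of $\Zh^3$.

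To identify this fibrewise action, I lift the generators of $V$ via $x_a,x_b,x_c\in\Gamma$, whose images in $\Gamma/\Gamma'=C_2^3$ are $\bar b\bar c$, $\bar c\bar a$, $\bar a\bar b$, and compute their conjugation action on the $\Z^3$-summand of $\Gamma'/\Gamma''$ spanned by $d=[a,b]$, $e=[b,c]$, $f=[c,a]$. The identity $[x,y]^x=[x,y]^{-1}$, valid whenever $x^2=1$, yields $d^a=d^b=d^{-1}$, $e^b=e^c=e^{-1}$, $f^c=f^a=f^{-1}$, and therefore $d^{ab}=d$, $e^{bc}=e$, $f^{ca}=f$, so each $\bar x_s$ fixes the coordinate dual to $s$. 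For the six remaining cross actions (such as $e^{ab}$), one expands $e^a=[b^a,c^a]=[bd^{-1},cf]$ using the metabelian identity $[ux,vy]\equiv[u,y][u,v][x,v]\pmod{\Gamma''}$ for $x,y\in\Gamma'$; this produces correction terms lying in the $C_3$-summand of $\Gamma'/\Gamma''$ generated by the three identified elements $[a,e]=[c,d]=[b,f]$ (cf.\ the paragraph above Proposition~\ref{prop:branch kernel hanoi}). Since the endomorphism $\sigma$ of Remark~\ref{rmk:endomorphism} annihilates the $C_3$-summand in the inverse limit defining $A$, these congruences become equalities in $\Zh^3$, and each $\bar x_s$ acts as the diagonal $\pm1$-involution with the unique $+1$ on the $s$-dual axis, i.e., a half-turn along that coordinate axis.

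The main technical obstacle is the careful bookkeeping of $C_3$-corrections in the metabelian commutator expansions: one has to repeatedly invoke the three identifications $[a,e]=[c,d]=[b,f]$ to see that the pairs of cross terms appearing in $d^{bc}$, $e^{ab}$, etc., either cancel within $C_3$ or collapse to a single element that is killed by $\sigma$ in the limit. Once this is verified, the description of the $V$-action on $\Zh^3[[X^\omega]]$ is complete.
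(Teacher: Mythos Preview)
Your proposal is correct and follows essentially the same strategy as the paper: assemble the extension from Proposition~\ref{prop:branch kernel hanoi} and the rigid-kernel computation, then determine the $V$-action by computing conjugation of $d,e,f$ by representatives of $V$ in $\Gamma'/\Gamma''$ modulo the $C_3$-summand, which vanishes in the limit.

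The one genuine difference is your diagonality argument. The paper obtains it concretely from the recursion $x_s\equiv(x_s,x_s,x_s)\pmod{\Gamma'}$ (Equation~\eqref{eq:Hanoi rigid kernel}), which directly shows that each lift of $V$ acts identically on every first-level subtree and hence on every copy of $\Zh^3$. You instead argue abstractly via Theorem~\ref{thm:description branch kernel}: since $V=\ker(\gt\to\gb)$ acts trivially on $\gt/\gt_w\simeq X^\omega$, the action on $\Zh^3[[X^\omega]]$ must be fibrewise. Both are valid; the paper's route is shorter and self-contained, while yours explains \emph{why} diagonality was to be expected from the general module description. For the fibrewise computation itself, the paper simply records $d^{bc}\simeq d^{-1}$, $e^{bc}=e$, $f^{bc}\simeq f^{-1}$ up to $\Gamma''$ and the $C_3$-factor and invokes the cyclic symmetry in $a,b,c$; your metabelian expansion with the identifications $[a,e]=[c,d]=[b,f]$ is exactly the bookkeeping behind those $\simeq$ signs.
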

\begin{proof}
From Equation~\eqref{eq:Hanoi rigid kernel}, it is clear that the action of $V$ is diagonal on each level.
Comparing the proof of Proposition~\ref{prop:branch kernel hanoi} and of Lemma~\ref{lem:Hanoi branch completion}, we see that it is sufficient to look at the action of $bc$ on $d$, $e$, $f$, up to cyclic permutation of $a$, $b$, $c$, and $d$, $e$, $f$.
One has
\begin{align*}
d^{bc}&\simeq d^{-1},&e^{bc}&=e,&f^{bc}&\simeq f^{-1},
\end{align*}
where the $\simeq$ sign indicates that the equality holds mod $\Gamma''$, and up to the $C_3$ factor inside $\Gamma'/\Gamma''$.
Since this $C_3$ disappears in the inverse limit defining the branch kernel, it is irrelevant here.
\end{proof}

\begin{thm}
  The congruence kernel of $\Gamma$ is torsion-free; in particular,
  the extension $\Zh^3[[X^\omega]]\cdot V$ is not split.
\end{thm}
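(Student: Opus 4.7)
The plan is to prove the stronger statement that no element of $\K$ outside the branch kernel has finite order; the branch kernel $\Zh^3[[X^\omega]]$ is itself torsion-free (as an inverse limit of free $\Zh$-modules), so this yields torsion-freeness of $\K$, and \emph{a fortiori} non-splitness of the extension, since any splitting would embed $V$ into $\K$.

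The first step is a reduction. Because $V$ has exponent~$2$, any putative torsion element may be replaced by one of order~$2$ whose image is a non-trivial involution $t\in V$; so it suffices to rule out order-$2$ lifts of such $t$. Given any lift $\tilde t\in\K$, the square $\tilde t^2$ lies in the branch kernel, and since $V$ acts on the branch kernel through the quotient while $\tilde t$ commutes with $\tilde t^2$, we obtain $\tilde t^2\in F_t[[X^\omega]]$, where $F_t\subset\Zh^3$ is the fixed axis of the half-turn $t$ (cf.~Theorem~\ref{thm:congruence kernel Hanoi}). Any other lift is $m\tilde t$ for $m$ in the branch kernel, and satisfies $(m\tilde t)^2=(1+t)m+\tilde t^2$; so some lift of $t$ has order~$2$ iff $\tilde t^2\in(1+t)\Zh^3[[X^\omega]]=(2F_t)[[X^\omega]]$, equivalently, iff the $F_t$-coefficient of $\tilde t^2$ is divisible by~$2$.

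I would then produce an explicit lift and rule this out. By cyclic symmetry it suffices to treat $t=\bar x_a$. Put $g_n=X^n\ast x_a\in\Gamma$; the sequence converges in $\gt$ to $t$ by the proof of Lemma~\ref{lem:Hanoi branch completion}, and by compactness of $\gh$ a subsequence $g_{n_k}$ converges in $\gh$ to a lift $\tilde t$ of $t$. From $x_a=(a,a,bc)$ one reads off
\begin{align*}
x_a^2=(1,1,(bc)^2)=(1,1,[b,c])=(1,1,e),
\end{align*}
so $g_{n_k}^2=X^{n_k}\ast x_a^2$ is supported at the level-$(n_k+1)$ vertices $w3$ (with $w\in X^{n_k}$), each carrying the element $e$. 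Since $\bar x_a$ acts on $d,e,f$ as $(-,+,-)$, the fixed axis $F_t$ is the $e$-line, and the contribution of $g_{n_k}^2$ to $F_t[[X^\omega]]$ is the characteristic function of $X^{n_k}\cdot 3\subset X^{n_k+1}$; its push-down to level~$0$ is $3^{n_k}\in\Zh$. As $3^{n_k}\equiv 1\pmod 2$ for every~$k$, the level-$0$ $F_t$-coefficient of $\tilde t^2$ is odd, hence not in $2\Zh$; so no lift of $\bar x_a$ has order~$2$. The analogous formulas $x_b^2=(f,1,1)$ and $x_c^2=(1,d,1)$, together with cyclic symmetry, dispose of $\bar x_b$ and $\bar x_c$.

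The main obstacle I expect is the bookkeeping that identifies $g_n^2\in\Gamma$ with a specific coherent element of the inverse system $\varprojlim(\Z/k)^3[X^n]$ defining $\Zh^3[[X^\omega]]$, and in particular the verification that the push-down to level~$0$ indeed detects the $e$-coefficient modulo~$2$ (the $C_3$-summand inside $\Gamma'/\Gamma''$ being irrelevant in the $\Zh$-completion). Once that translation is in place, non-splitting at each involution is an immediate consequence of the parity of~$3^{n_k}$.
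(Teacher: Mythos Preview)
Your overall strategy coincides with the paper's: reduce to showing that no non-trivial $t\in V$ lifts to an involution, by computing the square of a specific lift and checking that its component along the fixed axis $F_t$ is odd. Your reduction $(m\tilde t)^2=(1+t)m+\tilde t^2$ and the identification $(1+t)\Zh^3[[X^\omega]]=(2F_t)[[X^\omega]]$ are correct.

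Where you diverge from the paper is in the construction and computation of the lift. The paper does not build $\tilde t$ as a limit in $\gh$; instead it passes immediately to the quotient extension $1\to\Zh^3\to K\to V\to 1$, which can be realised inside $\widehat{\Gamma/\Gamma''}$ (modulo the $C_3$-torsion). There the element $bc\in\Gamma$ itself furnishes a lift of $\bar x_a=\bar b\bar c$, and the crucial identity is the one-line computation $(bc)^2=[b,c]=e$; hence any lift squares to $e^{2\beta+1}\neq 1$. No limits, no push-down.

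Your ``push-down to level $0$'' is precisely the step where your proposal goes wrong. The isomorphism of the branch kernel with $\Zh^3[[X^\omega]]$ does \emph{not} make the connecting maps into naive summation over children: the map $\Gamma'/\Gamma''\to\Gamma'/\Gamma''$ induced by $g\mapsto x*g$ acts on the $e$-coordinate as $+1$ for $x\in\{2,3\}$ but as $-1$ for $x=1$ (this is the $(2,2)$-entry of the matrix in the proof of Proposition~\ref{prop:branch kernel hanoi}). Computing $g_n^2=\prod_{w\in X^n}(w3)*e$ directly in $\Gamma'/\Gamma''$ via these maps gives $e$-coordinate $\sum_{w\in X^n}\prod_i\epsilon_{w_i}=(-1+1+1)^n=1$, not $3^{n}$. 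The parity happens to agree, so your conclusion survives, but the value you wrote is incorrect and the justification you gave (counting vertices) does not reflect the actual connecting maps. The obstacle you anticipated is real, and the paper's route through $\widehat{\Gamma/\Gamma''}$ is exactly what dissolves it.
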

\begin{proof}
  By Theorem~\ref{thm:congruence kernel Hanoi}, $V$ acts on the
  quotient $\Zh^3[X^0]=\Zh^3$ generated by
  $d,e,f$. We prove that the corresponding extension
  \[1\to\Zh^3\to K\to V\to 1\]
  is torsion-free; the claim follows.

  Using cyclic permutation of $a,b,c$, it is enough to prove that any
  lift of $bc\in V$ to $K$ has infinite order. Such a lift can be
  written as $x=d^\alpha e^\beta f^\gamma(bc)$ for some
  $\alpha,\beta,\gamma\in\Zh$. We compute
  \[x^2=(d^\alpha e^\beta
  f^\gamma(bc))^2=e^{2\beta}(bc)^2=e^{2\beta+1}.\] Since the equation
  $2\beta+1=0$ has no solution in $\Zh$, we see that $x^2\in\Zh^3$ is
  not trivial, and therefore $x$ has infinite order.
\end{proof}

\begin{bibdiv}
\begin{biblist}
\bib{abert:nonfree}{article}{
  author={Ab\'ert, Mikl\'os},
  title={Group laws and free subgroups in topological groups},
  journal={Bull. London Math. Soc.},
  volume={37},
  date={2005},
  number={4},
  pages={525\ndash 534},
  issn={0024-6093},
  review={MR2143732},
  eprint={arXiv:math.GR/0306364},
}

\bib{bartholdi-g:parabolic}{article}{
  author={Bartholdi, Laurent},
  author={Grigorchuk, Rostislav I.},
  title={On parabolic subgroups and Hecke algebras of some fractal groups},
  journal={Serdica Math. J.},
  volume={28},
  date={2002},
  number={1},
  pages={47--90},
  issn={1310-6600},
  review={\MR {1899368 (2003c:20027)}},
  eprint={arXiv.org/abs/math/9911206},
}

\bib{bartholdi:lpres}{article}{
  author={Bartholdi, Laurent},
  title={Endomorphic presentations of branch groups},
  journal={J. Algebra},
  volume={268},
  date={2003},
  number={2},
  pages={419--443},
  issn={0021-8693},
  review={\MR {2009317 (2004h:20044)}},
  eprint={arXiv.org/abs/math/0007062},
}

\bib{bartholdi-g-s:bg}{article}{
  author={Bartholdi, Laurent},
  author={Grigorchuk, Rostislav I.},
  author={{\v {S}}uni{\'k}, Zoran},
  title={Branch groups},
  conference={ title={Handbook of algebra, Vol. 3}, },
  book={ publisher={North-Holland}, place={Amsterdam}, },
  date={2003},
  pages={989--1112},
  review={\MR {2035113 (2005f:20046)}},
  eprint={arXiv.org/abs/math/0510294},
}

\bib{bartholdi-s:tt}{unpublished}{
  author={Bartholdi, Laurent},
  author={Siegenthaler, Olivier},
  title={The twisted twin of Grigorchuk's group},
  date={2009},
  status={in preparation},
}

\bib{bass-l-s:congruence}{article}{
  author={Bass, Hyman},
  author={Lazard, Michel},
  author={Serre, Jean-Pierre},
  title={Sous-groupes d'indice fini dans ${\bf SL}(n,\,{\bf Z})$},
  language={French},
  journal={Bull. Amer. Math. Soc.},
  volume={70},
  date={1964},
  pages={385--392},
  review={\MR {0161913 (28 \#5117)}},
}

\bib{grigorchuk:burnside}{article}{
  author={Grigorchuk, Rostislav~I.},
  title={On Burnside's problem on periodic groups},
  date={1980},
  issn={0374-1990},
  journal={{\cyreight Funktsional. Anal. i Prilozhen.}},
  volume={14},
  number={1},
  pages={53\ndash 54},
  note={English translation: {Functional Anal. Appl. \textbf {14} (1980), 41--43}},
  review={\MRhref {81m:20045}},
}

\bib{grigorchuk:jibg}{incollection}{
  author={Grigorchuk, Rostislav~I.},
  title={Just infinite branch groups},
  date={2000},
  booktitle={New horizons in pro-$p$ groups},
  editor={Segal, Dan},
  editor={du Sautoy, Marcus Peter Francis},
  editor={Shalev, Aner},
  publisher={Birkh\"auser Boston},
  address={Boston, MA},
  pages={121\ndash 179},
  review={\MRhref {1 765 119}},
}

\bib{grigorchuk-s:hanoi}{article}{
  author={Grigorchuk, Rostislav},
  author={{\v {S}}uni{\'k}, Zoran},
  title={Asymptotic aspects of Schreier graphs and Hanoi Towers groups},
  language={English, with English and French summaries},
  journal={C. R. Math. Acad. Sci. Paris},
  volume={342},
  date={2006},
  number={8},
  pages={545--550},
  issn={1631-073X},
  review={\MR {2217913 (2006k:20048)}},
}

\bib{gupta-s:infinitep}{article}{
  author={Gupta, Narain~D.},
  author={Sidki, Said~N.},
  title={Some infinite $p$-groups},
  date={1983},
  issn={0373-9252},
  journal={Algebra i Logika},
  volume={22},
  number={5},
  pages={584\ndash 589},
}

\bib{hopcroft-u:automata}{book}{
  author={Hopcroft, John~E.},
  author={Ullman, Jeffrey~D.},
  title={Introduction to automata theory, languages, and computation},
  series={Addison-Wesley Series in Computer Science},
  publisher={Addison-Wesley Publishing Co.},
  address={Reading, Mass.},
  date={1979},
  isbn={0-201-02988-X},
}

\bib{nekrashevych:cantor}{article}{
  author={Nekrashevych, Volodymyr},
  title={A minimal Cantor set in the space of 3-generated groups},
  journal={Geom. Dedicata},
  volume={124},
  date={2007},
  pages={153--190},
  issn={0046-5755},
  review={\MR {2318543 (2008d:20075)}},
}

\bib{boston-c-g:prop}{article}{
  label={OWR06},
  title={Pro-$p$ extensions of global fields and pro-$p$ groups},
  note={Abstracts from the workshop held May 21--27, 2006; Organized by Nigel Boston, John Coates and Fritz Grunewald; Oberwolfach Reports, Vol. 3, no. 2},
  journal={Oberwolfach Rep.},
  volume={3},
  date={2006},
  number={2},
  pages={1463--1535},
  issn={1660-8933},
  review={\MR {2308272}},
}

\bib{pervova:profinitecompletions}{article}{
  author={Pervova, Ekaterina~L.},
  title={Profinite completions of some groups acting on trees},
  date={2007},
  journal={J. of Algebra},
  volume={310},
  number={2},
  pages={858--879},
  year={2007},
}

\bib{ribes-z:pg}{book}{
  author={Ribes, Luis},
  author={Zalesskii, Pavel},
  title={Profinite groups},
  series={Ergebnisse der Mathematik und ihrer Grenzgebiete. 3. Folge. A Series of Modern Surveys in Mathematics [Results in Mathematics and Related Areas. 3rd Series. A Series of Modern Surveys in Mathematics]},
  volume={40},
  publisher={Springer-Verlag},
  place={Berlin},
  date={2000},
  pages={xiv+435},
  isbn={3-540-66986-8},
  review={\MR {1775104 (2001k:20060)}},
}

\bib{selmer:diophantine}{article}{
  author={Selmer, Ernst S.},
  title={The Diophantine equation $ax\sp 3+by\sp 3+cz\sp 3=0$},
  journal={Acta Math.},
  volume={85},
  date={1951},
  pages={203--362 (1 plate)},
  issn={0001-5962},
  review={\MR {0041871 (13,13i)}},
}

\bib{sidki:pres}{article}{
  author={Sidki, Said~N.},
  title={On a $2$-generated infinite $3$-group: the presentation problem},
  date={1987},
  issn={0021-8693},
  journal={J. Algebra},
  volume={110},
  number={1},
  pages={13\ndash 23},
}

\end{biblist}
\end{bibdiv}
\end{document}